\documentclass[11pt,a4paper]{article}
\usepackage{amssymb,amsmath,amsfonts,mathrsfs,bm,enumerate,comment}
\allowdisplaybreaks[1]
\numberwithin{equation}{section}

\newcommand{\fscomment}[1]{\color{blue}}

\usepackage[colorlinks=true, pdfstartview=FitV, linkcolor=blue, citecolor=blue, urlcolor=blue,pagebackref=false]{hyperref}

\usepackage{tikz}
\usepackage{float}
\usepackage[font=footnotesize]{caption}

\usepackage{color} 
\parskip 2pt

\setlength{\topmargin}{-0.50cm}
\setlength{\oddsidemargin}{1.05cm}
\textwidth=140mm
\textheight=230mm

\usepackage{times,theorem,latexsym,color,comment}

\newcommand{\BOX}{\ensuremath\Box}

\newtheorem{theorema}{Theorem}

\newtheorem{theorem}{Theorem }[section]

{\theorembodyfont{\rmfamily}}
{\theorembodyfont{\rmfamily}}
{\theorembodyfont{\rmfamily}}
\newtheorem{lemma}[theorem]{Lemma}

{\theorembodyfont{\rmfamily}\newtheorem{remark}[theorem]{Remark}}
{\theorembodyfont{\rmfamily}}

\newcommand{\N}{\mathbb{N}}
\newcommand{\Z}{\mathbb{Z}}
\newcommand{\R}{\mathbb{R}}
\newcommand{\C}{\mathbb{C}}
\newcommand{\rmc}{\mathrm{c}}

\newcommand{\mcO}{\mathcal{O}}
\newcommand{\mcV}{\mathcal{V}}
\newcommand{\dd}{\,{\rm d}}
\newcommand{\opsupp}{\operatorname{supp}}
\newcommand{\opspan}{\operatorname{span}}

\newcommand{\opbog}{\operatorname{Bog}}
\newcommand{\oparg}{\operatorname{arg}}
\newcommand{\ep}{\varepsilon}
\newcommand{\ii}{\mathrm i}

\newcommand{\overbar}[1]{\mkern 1.5mu\overline{\mkern-1.5mu#1\mkern-1.5mu}\mkern 1.5mu}

\def\XXint#1#2#3{{\setbox0=\hbox{$#1{#2#3}{\int}$}
		\vcenter{\hbox{$#2#3$}}\kern-.5\wd0}}

\newenvironment{proof}{{\vskip\baselineskip\noindent\textbf{Proof:}}}%
{\hspace*{.1pt}\hspace*{\fill}\BOX\vskip\baselineskip}

\newenvironment{proofx}[1]%
{\vskip\baselineskip\noindent\textbf{Proof of {#1}:}}%
{\hspace*{.1pt}\hspace*{\fill}\BOX\vskip\baselineskip}
{\vskip\baselineskip\noindent\textbf{Proof of Theorem \protect\ref{#1}:}}%
{\hspace*{.1pt}\hspace*{\fill}\BOX\vskip\baselineskip}
{\vskip\baselineskip\noindent\textbf{Proof of Theorems \protect\ref{#1} --
		\protect\ref{#2}:}}%
{\hspace*{.1pt}\hspace*{\fill}\BOX\vskip\baselineskip}

\begin{document}

\title{A Runge-type approximation theorem \\
for the 3D unsteady Stokes system}

\author{Mitsuo Higaki and Franck Sueur}
\date{}

\maketitle

\noindent {\bf Abstract.}\ 
We investigate Runge-type approximation theorems for solutions to the 3D unsteady Stokes system. More precisely, we establish that on any compact set with connected complement, local smooth solutions to the 3D unsteady Stokes system can be approximated with an arbitrary small positive error in $L^\infty$ norm by a global solution of the 3D unsteady Stokes system, where the velocity grows at most exponentially at spatial infinity and the pressure grows polynomially. Additionally, by considering a parasitic solution to the Stokes system, we establish that some growths at infinity are indeed necessary.

\tableofcontents

    \section{Introduction and statement of the main results}
    \label{sec.intro}

    \subsection{Runge-type approximation theorems and the 3D unsteady Stokes system}

A classical theorem by Runge \cite{Runge} states, in particular, that any function which is holomorphic in a neighborhood of a compact set $K$ of the complex plane $\C$, such that the complementary set $\C \setminus K$ is connected, can be uniformly approximated by some polynomials. Actually the full version of Runge's theorem allows for any domain whose complement is disconnected up to extending the space of approximations to the space of rational functions with at least one pole in each connected component of the complementary.

This result has been the subject of many extensions. First the Mergelyan theorem \cite{Mergelyan} in 1952 concerns functions which are only continuous on $K$ and holomorphic in the interior of $K$. The result was also extended to the Laplace equations in any dimension by Walsh \cite{Walsh} in 1929. Then Lax \cite{Lax} and Malgrange \cite{Malgrange} proved that for some classes of elliptic linear differential equations in the Euclidean space $\R^d$, solutions in a bounded set $\Omega$ can be uniformly approximated by solutions in $\R^d$ provided that $\Omega$ has a connected complement in $\R^d$. The classes of admissible elliptic operators, whose main example is the set of operators with analytic coefficients, were subsequently extended by Browder \cite{Browder}, including approximation in certain H\"older spaces. Later on some results were obtained for non-elliptic equations; see for example \cite{Jon1975,EGFPS2019,EncPer-Sal2021}.

In this paper, we investigate Runge-type properties of the 3D unsteady Stokes system
\begin{equation}\tag{S}\label{intro.eq.S}
\left\{
\begin{array}{ll}
\partial_{t} v - \Delta v + \nabla q 
= 0, \\
\nabla\cdot v = 0.
\end{array}\right.
\end{equation}
That is, the possibility to approximate local solutions of \eqref{intro.eq.S} by some global solutions. By a local solution, we mean a smooth solution $(v,q)$ of \eqref{intro.eq.S} on some compact set $K\subset\R^{4}_{+}$, where $\R^{4}_{+}$ denotes the full space for positive times $\R^{3} \times (0,\infty)$, in the sense that $(v,q)$ satisfies \eqref{intro.eq.S} for some open neighborhood $\mcO$ of $K$ in $\R^{4}_{+}$ and is smooth in $\mcO$. On the other hand, by a global solution, we mean a smooth solution $(u,p)$ of the Stokes system in $\R^{4}_{+}$
\begin{equation}\label{app}
\left\{
\begin{array}{ll}
\partial_{t} u - \Delta u + \nabla p
= 0&\mbox{in}\ \R^{4}_{+},\\[2pt]
\nabla\cdot u
= 0&\mbox{in}\ \R^{4}_{+}.
\end{array}\right.
\end{equation}
In the sequel an important topological condition on $K$ is that the time-sections $K(t) = K \cap (\R^{3}\times \{ t \})$, for all $t\in\R$, have complementary sets $K^{\rmc}(t)$ which are connected.

Our work is carried out in the class of functions without decay in space. The analysis of the unsteady Stokes system in such classes is known to present particular difficulties, with which we face in this paper can be summarized as: (I) Unboundedness of the Riesz operators in the $L^{\infty}$ space; (I\hspace{-1.2pt}I) Existence of certain exact solutions, called parasitic solutions.

Technical challenges may arise from Point (I). Recall that the Helmholtz projection (see \cite{Soh2013book}) in $\R^{3}$ involves the Riesz operators. In particular, when proving the global approximation theorems using the $L^{\infty}$ norm, it is not clear that simply revisiting the approach in \cite{EGFPS2019} for the parabolic equations will be successful. Somewhat surprisingly, it is figured out in Sections \ref{Ext}--\ref{App-disc} that at least in the ``sweeping of poles and discretization" argument, which is the classical and first step towards Runge-type theorems, the issues related to Point (I) do not have any effect. This is thanks to off-diagonal nature of the argument. Consequently, we obtain a Runge-type approximation theorem as described in Theorem \ref{thma} in the next section, which is in the same vein as \cite[Theorems 1.1 and 1.2]{EGFPS2019}.

Difficulties arising from Point (I\hspace{-1.2pt}I) are more inherent in this paper. In fact, we point out that, unlike \cite[Theorem 1.2]{EGFPS2019}, the approximations in Theorem \ref{thma} exhibit growth in space at spatial infinity rather than decay. This relates to the lack of uniqueness of solutions for the Cauchy problem of the Stokes system within the class of functions without decay at infinity. To see the breakdown of uniqueness, we refer to the following exact solutions
\begin{equation}\label{para}
v(x,t) = c(t) \nabla h(x)
\qquad \text{and} \qquad
q(x,t) = -c'(t) h(x),
\end{equation}
where $c(t)$ is a function on $(0,\infty)$ and $h(x)$ is a harmonic function on $\R^{3}$. The solutions \eqref{para} are called parasitic solutions or Serrin's examples \cite{Ser1962}. Note that parasitic solutions play a crucial role in the local regularity theory of the (Navier-)Stokes system \cite{Ser2015book} since they indicate that the regularity of solutions in time is determined by the pressure, and hence, there is no smoothing in time in general. The presence of parasitic solutions stands in sharp contrast to the situation for the heat equation or, in particular, for \cite[Theorem 1.2]{EGFPS2019}, where one can appeal to the uniqueness of solutions with exponential growth in space; see \cite[Section 7.1 (b)]{Joh1991book}. Furthermore, parasitic solutions provide counterexamples to \cite[Theorem 1.2]{EGFPS2019} for the case of the Stokes system, as described in Theorem \ref{thmb} in the next section. It states that there are parasitic solutions which cannot be approximated by bounded global solutions of the Stokes system. Therefore, some growths at infinity such as \eqref{growth.thma} in Theorem \ref{thma} are indeed necessary for the Runge-type theorems to hold.

    \subsection{A positive result with some growth at infinity}

Our main result in this paper is the following.

\begin{theorema}\label{thma}
Let $K\subset \R^{4}_{+}$ be a compact set such that $K^{\rmc}(t)$ is connected for all $t\in\R$. Given a smooth solution $(v,q)$ of the Stokes system \eqref{intro.eq.S} in $K$ and $\ep>0$, there is a smooth solution $(u,p)$ of \eqref{app} and positive real numbers $C,c,\rho$ such that for all $(x,t)$ in $\R^{4}_{+}$
\begin{equation}\label{growth.thma}
|u(x,t)| \le C e^{c|x|}
\qquad \text{and} \qquad
|p(x,t)| \le C (1+|x|)^{\rho} ,
\end{equation}
and
\begin{equation}\label{approx.thma}
 \|v - u\|_{C(K)}  < \ep.   
\end{equation}
\end{theorema}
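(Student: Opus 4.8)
The plan is to follow the classical Runge strategy, adapted to the unsteady Stokes system. The first ingredient is a duality/functional-analytic reformulation: in order to approximate a local solution $(v,q)$ on $K$ by a global solution, one characterizes the failure of approximation by a measure (or distribution) supported on $K$ that annihilates all global Stokes solutions but not the target $v$, and shows via a Hahn--Banach argument that such an obstruction cannot exist under the topological hypothesis that each time-section $K^{\rmc}(t)$ is connected. Equivalently, and more constructively, one proceeds in two conceptually separate steps: (i) an \emph{extension/localization} step, in which the local solution on a neighborhood $\mcO$ of $K$ is represented through a potential-theoretic formula — convolution of a source term (obtained by cutting off $(v,q)$ with a smooth spatial or space-time cutoff) against the fundamental solution (Oseen tensor) of the Stokes system — so that the resulting function solves the Stokes system outside the support of the cutoff and approximates $v$ on $K$; and (ii) a \emph{sweeping of poles and discretization} step, in which the ``poles'' of this representation (the singular support, lying in $\mcO \setminus K$) are pushed off to spatial infinity along paths in $\mathbb{R}^3 \setminus K(t)$, using the connectedness of $K^{\rmc}(t)$ to route the poles out, while discretizing the integral representation into a finite sum of elementary Stokes solutions (fundamental solutions centered at moving base points). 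The excerpt explicitly signals that step (ii) — the sweeping and discretization — goes through essentially as in \cite{EGFPS2019} for the parabolic case, because the argument is ``off-diagonal'': one only ever evaluates kernels centered at points \emph{outside} $K$ at points \emph{inside} $K$, so the unboundedness of the Riesz transforms on $L^\infty$ (Point (I)) never enters.

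Concretely, I would carry out the steps in the following order. First, reduce to the case where $K$ is replaced by a slightly larger compact set $K'$ with smooth boundary, still with connected section-complements, on which $(v,q)$ is smooth; this is a routine topological thickening. Second, establish the elementary-solutions density lemma: the linear span of the functions $x \mapsto E(x - y, t - s)$ and its $x$-derivatives, where $E$ is the Oseen fundamental tensor and $(y,s)$ ranges over a suitable ``exterior'' set, is dense (in $C(K)$, or in the relevant solution topology) in the space of global Stokes solutions restricted to $K$ — this is where growth at infinity is tolerated and where one must be careful about the pressure component, since $p$ is only determined up to the parasitic ambiguity and one allows polynomial growth for it. Third, prove a single-pole-pushing lemma: if $(y_0,s_0) \in K^{\rmc}(t_0) \times \{t_0\}$ and $(y_1,s_1)$ lies in the same connected component, then the Stokes solution with pole at $(y_0,s_0)$ can be uniformly approximated on $K$ by one with pole at $(y_1,s_1)$ — proved by a continuity/compactness argument along a path, controlling the kernel and its derivatives uniformly since the path stays at positive distance from $K$. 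Fourth, iterate to push all poles to a single far-away region and then out to spatial infinity, at which stage the exponential-in-space and polynomial-in-pressure bounds \eqref{growth.thma} emerge from the known pointwise estimates on the Oseen tensor and its derivatives (these decay in space but the \emph{finite linear combinations with large coefficients} needed to compensate for distant poles produce the stated exponential bound). Finally, assemble: the extension step gives a function close to $v$ on $K$ modulo an error supported away from $K$, the density lemma writes that error's ``good part'' as a limit of finite pole-sums, and the pushing lemma relocates those poles to infinity, yielding a genuine global solution $(u,p)$ on $\R^4_+$ with \eqref{growth.thma} and \eqref{approx.thma}.

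The main obstacle, I expect, is not the sweeping argument (which the authors advertise as going through with only notational changes from the parabolic case) but rather the \emph{pressure bookkeeping} and the \emph{correct notion of the fundamental solution} for the unsteady Stokes system in a no-decay setting. Because of the parasitic solutions \eqref{para}, the pressure is not controlled by the velocity in the usual way, and the Oseen tensor's pressure component has a distributional (instantaneous-in-time, Calderón--Zygmund-in-space) part; one must set up the representation formula so that the polynomial pressure growth in \eqref{growth.thma} is honestly achieved and the pair $(u,p)$ \emph{together} solves \eqref{app} classically. A secondary technical point is ensuring that the potential-theoretic extension in step (i) does not itself introduce spurious decay requirements: one wants the cutoff source to be compactly supported in space-time so that convolution with $E$ produces a function that is globally defined and smooth away from a compact ``pole set,'' and one must check that the time-convolution (which integrates over $s \in (0,t)$) respects the half-space $\R^4_+$ structure and the smoothness up to $t \to 0^+$. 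Once these are handled, the connectedness hypothesis on $K^{\rmc}(t)$ feeds in exactly as in the classical Runge theorem — it is what allows each pole to be continuously transported to infinity within the complement of the obstacle at every fixed time — and the exponential velocity bound is the price paid for not having (and not wanting) uniqueness in the no-decay class, consistent with the sharpness asserted in Theorem \ref{thmb}.
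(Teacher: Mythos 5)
Your steps (i) and (ii) match the paper: the local solution is extended by a space--time cutoff (with a Bogovskii correction to restore the divergence-free condition) to a global solution with compactly supported $L^1$ forcing, represented by convolution with the Oseen tensor $\Gamma$; the forcing is then discretized into finitely many Dirac masses and the poles are relocated, using the connectedness of $K^{\rmc}(t)$, to a region away from the cylinder containing $K$ (the paper does this by Hahn--Banach plus unique continuation for the adjoint Stokes system rather than by transporting poles along paths, but that is a cosmetic difference). The genuine gap is in your final step. A finite linear combination of fundamental solutions with poles at finite points, however distant, is \emph{not} a global solution of the homogeneous system \eqref{app}: it retains singularities at the poles. ``Pushing the poles out to spatial infinity'' has no limiting object --- $\Gamma$ decays in space, so the naive limit is zero, and blowing up the coefficients to compensate does not produce a solution. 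In the classical Runge theorem this step is the replacement of $1/(z-a)$, for $a$ outside a large disk, by a partial sum of its Taylor series, i.e.\ by a \emph{polynomial}, which is entire. You never identify the analogous family of entire Stokes solutions, nor prove that the exterior pole sums lie in its closure on $K$ (your ``density lemma'' is in fact stated in the wrong direction for what is needed).

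The paper supplies exactly this missing mechanism: it takes the Fourier transform in time of the pole sum $v_1$ (smooth on $B_R\times\R$ since all poles lie outside), expands in vector spherical harmonics, truncates in both the frequency $\tau$ and the degree $l$, and observes that the radial coefficients satisfy modified Bessel ODEs whose regular-at-the-origin solutions $r^{-1/2}I_{l+1/2}(\sqrt{\ii\tau}\,r)$ extend from $[0,\rho)$ to all of $[0,\infty)$ with growth $e^{\sqrt{\tau_2}\,r}$; the pressure coefficients are harmonic radial profiles $B_{lm}r^{l}$, whence the polynomial bound. This is where \eqref{growth.thma} actually comes from --- the Bessel asymptotics of the globally continued radial profiles after frequency truncation --- not, as you suggest, from ``large coefficients compensating for distant poles.'' Without this (or an equivalent) globalization device, your argument produces only an approximant that solves the Stokes system on a neighborhood of $K$ with singularities elsewhere, which is not the assertion of the theorem.
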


The proof of Theorem \ref{thma} is given in Section \ref{sec-proofA}.

\begin{remark}\label{rem.thma}
\begin{enumerate}[(i)]
\item\label{item1.rem.thma}
The approximation \eqref{approx.thma} can be improved in space to, for fixed $l\in\Z_{\ge0}$,
\begin{align*}
\|v - u\|_{C^{l}_{x}C_{t}(K)}
< \ep.
\end{align*}
Indeed, one can apply to $v-u$ the regularity theory of the steady Stokes system \cite{Ser2015book} at each fixed $t$ by regarding $-(\partial_{t} v - \partial_{t} u)$ as a given forcing. Improvement in time cannot be expected due to the loss of smoothing implied by parasitic solutions \eqref{para}.

\item\label{item3.rem.thma}
The statement of Theorem \ref{thma} in $\R^{4}_{+}$ for positive times is presented to underline the difference with \cite[Theorem 1.2]{EGFPS2019} for the heat equations, although the global approximations $(u,p)$ can be actually defined in the whole space-time $\R^{4}$ as seen in the proof of Theorem \ref{thma}. In \cite[Theorem 1.2]{EGFPS2019}, the approximations are expressed as $u=e^{t\Delta} u_{0}$ using the heat semigroup $e^{t\Delta}$ and a suitably chosen smooth, compactly supported initial data $u_{0}$. We emphasize that the difference stems from the uniqueness of solutions to the Cauchy problems in the class of functions with (exponential) growth in space. More details on this point can be found in Section \ref{Disc}.
\end{enumerate}
\end{remark}

    \subsection{A negative result without any growth at infinity}

A complementary result to Theorem \ref{thma}, which states that one cannot expect a similar result without some growth of the approximation at infinity, is the following.

\begin{theorema}\label{thmb}
Let $K\subset \R^{4}_{+}$ be a flat compact space-time cylinder. There are a smooth parasitic solution $(v,q)$ of the form \eqref{para} and $\ep>0$ such that the following holds: if $(u,p)$ is a smooth solution of \eqref{app} which is bounded, that is which satisfies, for all $(x,t)$ in $\R^{4}_{+}$,
\begin{equation}\label{bdd.thmb}
|u(x,t)| +|p(x,t)|
\le C
\end{equation}
for some constant $C$, then 
\begin{align*}
\|v - u\|_{C(K)}
> \ep .
\end{align*}
\end{theorema}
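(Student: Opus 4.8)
The plan is to exploit the rigidity of parasitic solutions together with a Liouville-type argument for bounded solutions of the Stokes system. Fix the flat compact space-time cylinder $K = \overbar{B_R} \times [t_1,t_2] \subset \R^4_+$ for some ball $B_R \subset \R^3$. Choose the parasitic solution by taking a nonconstant harmonic polynomial $h$ on $\R^3$ — for instance a linear function $h(x) = x_1$, so that $v(x,t) = c(t)\nabla h(x) = c(t) e_1$ and $q(x,t) = -c'(t) x_1$ — where $c$ is a fixed smooth, nonconstant function on $(0,\infty)$ (for a linear $h$ the choice of $c$ is essentially free; if one prefers a genuinely $x$-dependent $v$ one takes $h(x) = x_1^2 - x_2^2$ and then $v = c(t)(2x_1,-2x_2,0)$, at the cost of $v$ itself growing in $x$, which is harmless since $K$ is compact). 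The key structural point to extract is: if $(u,p)$ is \emph{any} smooth bounded solution of \eqref{app}, then the difference $(w,r) := (v-u, q-p)$ solves the Stokes system in $\R^4_+$ and $w$ is bounded from below away from $u$ only if... — more precisely, the strategy is to show that a bounded global Stokes solution cannot have a time-section-wise constant (or low-degree-polynomial) gradient profile matching $v$ uniformly on $K$.

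The first main step is a Liouville statement: a smooth solution $(u,p)$ of the Stokes system in $\R^4_+$ satisfying \eqref{bdd.thmb} is, at each fixed time, very constrained. Indeed, differentiating in $t$ and using $\partial_t u = \Delta u - \nabla p$, together with the interior estimates for the steady Stokes system (cited in Remark \ref{rem.thma}\eqref{item1.rem.thma} via \cite{Ser2015book}), one gets that a bounded $u$ must be spatially affine modulo the pressure gradient; combined with $\nabla \cdot u = 0$ and boundedness of $p$, standard Liouville theorems for the (steady, then unsteady) Stokes system force $p$ to be affine in $x$ and $u$ to be affine in $x$ with bounded-in-time coefficients. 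The second step is to quantify the mismatch: on $K$, we must compare $v(x,t) = c(t)\nabla h(x)$ against such a restricted $u$. Because $c$ is nonconstant while the admissible $u$ has a rigidly constrained time dependence (the affine-in-$x$ coefficient of a bounded unsteady Stokes solution is itself forced to be constant in $t$, by the same Liouville argument applied to $\partial_t u$), there is a fixed lower bound $\ep > 0$, depending only on $c$, $h$ and $K$, below which $\|v-u\|_{C(K)}$ cannot drop, uniformly over all bounded $(u,p)$. One makes this explicit by evaluating at two times $t_a, t_b \in [t_1,t_2]$ with $c(t_a) \neq c(t_b)$ and at a point $x_0 \in B_R$ where $\nabla h(x_0) \neq 0$, using the triangle inequality to absorb the (time-independent, up to the constraint) contribution of $u$.

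The main obstacle I anticipate is making the Liouville step both correct and self-contained: bounded solutions of the unsteady Stokes system in $\R^4_+$ need not be unique (this is precisely the point of parasitic solutions!), so one cannot simply invoke uniqueness. The resolution is that while bounded solutions are not unique, they are classified — the only bounded parasitic solutions are those with $h$ affine (since a nonconstant harmonic polynomial of degree $\ge 1$ has unbounded gradient unless it is degree exactly $1$) and $c$ bounded, and more generally any bounded Stokes solution on $\R^4_+$ decomposes into a genuinely decaying/regular part plus such low-complexity terms. The delicate sub-point is controlling behaviour as $t \downarrow 0$, since $\R^4_+$ excludes $t=0$; here one uses that $K \subset \R^3 \times [t_1,t_2]$ with $t_1 > 0$, so all estimates are applied on $\R^3 \times [t_1/2, t_2+1]$ where parabolic interior regularity is available without reference to initial data. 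I would also double-check that the chosen $c$ can be taken smooth on all of $(0,\infty)$ with the required non-constancy on $[t_1,t_2]$ — trivially arranged, e.g. $c(t) = t$.
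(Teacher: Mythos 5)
Your reduction of the problem is partly on the right track: you correctly choose $h(x)=x^{1}$ so that $v=c(t)\,{\bf e}_{1}$, and you correctly observe that taking the divergence of the momentum equation gives $\Delta p=0$, whence boundedness of $p$ and Liouville force $p$ to be constant in $x$ at each time, so that $u$ in fact solves the heat equation in $\R^{4}_{+}$. This is exactly the first step of the paper's argument. But your next step — the ``Liouville statement'' that a bounded solution of the Stokes system on $\R^{4}_{+}$ must be spatially affine with time-independent coefficients — is false, and the whole proposal collapses there. For instance, $u(x,t)=\big(e^{-t}\sin x^{2},0,0\big)$, $p=0$, is a bounded, divergence-free, smooth solution of \eqref{app} on all of $\R^{4}_{+}$ that is neither affine in $x$ nor constant in $t$; more generally $u=e^{t\Delta}u_{0}$ for any bounded divergence-free $u_{0}$ is admissible. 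Liouville rigidity of this kind holds for \emph{ancient} bounded caloric/Stokes solutions (on $\R^{3}\times(-\infty,0)$), not on the forward half-space $\R^{3}\times(0,\infty)$. Consequently there is no ``rigidly constrained time dependence'' to play against the non-constancy of $c$, and your claim that any nonconstant $c$ yields a uniform lower bound $\ep>0$ is unjustified: the class of competitors $u^{1}$ is the full class of bounded caloric functions, which can certainly be nonconstant in $t$ on $K$.

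What is actually needed is a \emph{quantitative} obstruction within the class of bounded caloric functions, and this is where the paper's proof differs essentially from yours: it chooses $c$ \emph{decreasing}, with $c(2)=2$ and $c(3)=\ep_{0}=(2C_{0})^{-1}$, where $C_{0}$ is the constant in the local parabolic Harnack inequality (Lemma \ref{HH}). If $\|v-u\|_{C(K)}\le\ep=\ep_{0}/2$, then $u^{1}\ge c(t)-\ep\ge\ep_{0}-\ep>0$ on $K$, so $u^{1}$ is a nonnegative caloric function on $K$ and Harnack gives $\sup_{\tilde K}u^{1}(\cdot,2)\le C_{0}\inf_{\tilde K}u^{1}(\cdot,3)$; but the approximation forces $\sup u^{1}(\cdot,2)>3/2$ and $\inf u^{1}(\cdot,3)<1/C_{0}$, a contradiction. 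In other words, the obstruction is not algebraic rigidity of bounded solutions but the impossibility of a nonnegative caloric function decaying too fast forward in time relative to its size — a mechanism entirely absent from your proposal. To repair your argument you would need to replace the false Liouville step by this Harnack step (or an equivalent quantitative estimate), and to replace ``any nonconstant $c$'' by a $c$ whose decay over $[t_1,t_2]$ beats the Harnack constant of $K$.
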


The proof of Theorem \ref{thmb} is given in Section \ref{sec-proofB}.

\begin{remark}
\begin{enumerate}[(i)]
\item
For the heat equation, \cite[Theorem 1.2]{EGFPS2019} shows that any local solution $v$ can be approximated by a global solution $u=e^{t\Delta} u_{0}$ to the Cauchy problem with a suitably chosen smooth, compactly supported initial data $u_{0}$. Theorem \ref{thmb} asserts that the analogue of \cite[Theorem 1.2]{EGFPS2019} is not true for the Stokes system.

\item
Theorem \ref{thmb} prevents us from investing the cost problem by a bounded smooth global approximation following what is done in \cite{RulSal2019,EncPer-Sal2021} for some other systems.

\item
Noticeably the situation is quite different from Theorem \ref{thmb} in the case of fractional Fourier operators, in particular regarding the reachable states; see \cite{DSV2019}.
\end{enumerate}
\end{remark}

The rest of this paper is organized as follows. In Section \ref{sec.prelim}, we summarize the facts about vector calculus in the spherical coordinates and about the modified Bessel functions utilized in this paper. Section \ref{sec-proofA} is devoted to the proof of Theorem \ref{thma} and Section \ref{sec-proofB} to Theorem \ref{thmb}. An in-depth discussion of Theorem \ref{thma} is provided in Section \ref{Disc}.

    \section{Preliminaries}
    \label{sec.prelim}

    \subsection{Notation}

Throughout the paper, we adopt the following notation.
\begin{itemize}
\item
We denote the component of a vector $x\in\R^{3}$ by $x=(x^{1}\, x^{2}\, x^{3})^{\top}$.

\item
We write $A\lesssim B$ if there is some constant $C$, called the implicit constant, such that $A\le CB$. We also write $A\approx B$ if there is some implicit constant $C\ge1$ such that $C^{-1}A\le B\le CA$. The dependence of an implicit constant on other parameters $a,b,c,\ldots$ will be indicated as $A\lesssim_{\,a,b,c,\ldots} B$ or $A\approx_{\,a,b,c,\ldots} B$.

\item
We let $B_{R}$ denote the ball in $\R^{3}$ with radius $R>0$ centered at the origin.

\item
For $z\in\C\setminus (-\infty,0]$, we take the square root $\sqrt{z}$ so that $\Re \sqrt{z}>0$.

\item
When no confusion may occur, we use the same symbol to denote the quantities concerning scalar- or vector-valued mappings. For example, $\langle \cdot, \cdot\rangle_{S}$ denotes the inner product on $L^2(S)$ or $L^2(S)^2$, each of which is defined in the next section. 
\end{itemize}

    \subsection{Spherical coordinates}

Let $(r,\theta,\phi)$ denote the spherical coordinates on $\R^{3}$
\begin{align}\label{def.sph.coord.}
\begin{split}
\left(
\begin{matrix}
x^{1}\\
x^{2}\\
x^{3}
\end{matrix}
\right)
=
\left(
\begin{matrix}
r\sin \theta \cos \phi\\
r\sin \theta \sin \phi\\
r\cos \theta
\end{matrix}
\right),
\quad
(r,\theta,\phi)
\in
(0,\infty)
\times
[0,\pi]
\times
[0,2\pi)
\end{split}
\end{align}
and $\bm{\hat r},\bm{\hat \theta},\bm{\hat \phi}$ the unit vectors
\begin{align}
\bm{\hat r}
=
\left(
\begin{matrix}
\sin \theta \cos \phi\\
\sin \theta \sin \phi\\
\cos \theta
\end{matrix}
\right),
\quad
\bm{\hat \theta}
=
\left(
\begin{matrix}
\cos \theta \cos \phi\\
\cos \theta \sin \phi\\
-\sin \theta
\end{matrix}
\right),
\quad
\bm{\hat \phi}
=
\left(
\begin{matrix}
-\sin \phi\\
\cos \phi\\
0
\end{matrix}
\right).
\end{align}
For a function $f$ on $\R^{3}$, we slightly abuse the notation to denote
\[
f(x)
=f(r,\theta,\phi)
=f(r\sin\theta \cos\phi, r\sin\theta \sin\phi, r\cos\theta).
\]

Let $S_{r}$ denote the sphere with radius $r$ in $\R^{3}$:
\[
S_{r}=\{x\in\R^{3}~|~|x|=r\},
\quad r\in(0,\infty)
\]
and $S=S_{1}$ the unit sphere. For a function $f$ on $S$, we denote
\[
f(x)
=f(\theta,\phi)
=f(\sin\theta \cos\phi, \sin\theta \sin\phi, \cos\theta).
\]
The integral over $S$ is defined as
\[
\int_{S} f
=
\int_{0}^{2\pi}
\bigg(
\int_{0}^{\pi} f(\theta,\phi) \sin\theta \dd \theta
\bigg)
\dd \phi.
\]
For $f,g\in L^{2}(S)$, we write
\[
\langle f,g\rangle_{S} = \int_{S} f \overbar{g},
\qquad
\|f\|_{L^2(S)} = \langle f,f\rangle_{S}^\frac{1}{2}.
\]
Following our notational convention, we define in the same manner the space $L^{2}(S)^{3}$ for vector fields on $S$ as well as the inner product $\langle \cdot, \cdot\rangle_{S}$ on $L^{2}(S)^{3}$ and the norm $\|\cdot\|_{L^2(S)}$.

The Laplace-Beltrami operator on $S$ is defined by its action on a scalar field $f$ by
\begin{equation} \label{DS}
   \Delta_S f
 =   \frac{1}{\sin\theta} \frac{\partial}{\partial \theta}\left(\sin\theta \frac{\partial f}{\partial \theta}\right)
+ \frac{1}{\sin^2\theta} \frac{\partial^2 f}{\partial \phi^2} .
\end{equation}
Then the Laplacian of a scalar field $f$ in spherical coordinates is written as
\begin{equation} \label{DpasS}
   \Delta f
 =   \frac{1}{r^2} \frac{\partial}{\partial r}\left(r^2 \frac{\partial f}{\partial r}\right)
 +  \frac{1}{r^2} \Delta_S f .
\end{equation}
We also use the following: for a scalar field $f$, 
\begin{equation}
\nabla f
= 
\frac{\partial f}{\partial r}
\bm{\hat r}
+ \frac{1}{r} \frac{\partial f}{\partial \theta}
\bm{\hat \theta}
+ \frac{1}{r\sin\theta} \frac{\partial f}{\partial \phi}
\bm{\hat \phi}.
\end{equation}
For a vector field $w$, we will use the operators
\begin{align}
\begin{split}
\Delta w 
&=
\Big(
\Delta w^{r}
- \frac{2 w^{r}}{r^2}
- \frac{2}{r^2\sin\theta}
\frac{\partial}{\partial\theta} (w^{\theta} \sin\theta)
- \frac{2}{r^2\sin\theta}
\frac{\partial w^{\phi}}{\partial \phi}
\Big)
\bm{\hat r}\\
&\quad
+
\Big(\Delta w^{\theta}
- \frac{w^{\theta}}{r^2\sin^2\theta}
+ \frac{2}{r^2} \frac{\partial w^{r}}{\partial\theta}
- \frac{2\cos\theta}{r^2\sin^2\theta}
\frac{\partial w^{\phi}}{\partial \phi}
\Big)
\bm{\hat \theta}\\
&\quad
+
\Big(\Delta w^{\phi}
- \frac{w^{\phi}}{r^2\sin^2\theta}
+ \frac{2}{r^2\sin\theta} \frac{\partial w^{r}}{\partial \phi}
+ \frac{2\cos\theta}{r^2\sin^2\theta}
\frac{\partial w^{\theta}}{\partial \phi}
\Big)
\bm{\hat \phi}
\end{split}
\end{align}
and
\begin{align}
\begin{split}
\nabla\cdot w
&=
\frac{1}{r^{2}}
\frac{\partial}{\partial r}(w^{r} r^{2})
+ \frac{1}{r\sin\theta}
\frac{\partial}{\partial \theta}(w^{\theta} \sin\theta)
+ \frac{1}{r\sin\theta}
\frac{\partial w^{\phi}}{\partial \phi},\\
\nabla\times w
&=
\frac{1}{r\sin\theta}
\Big\{
\frac{\partial}{\partial \theta}(w^{\phi} \sin\theta)
- \frac{\partial w^{\theta}}{\partial \phi}
\Big\}
\bm{\hat r}\\
&\quad
+ \frac{1}{r}
\Big\{
\frac{1}{\sin\theta}
\frac{\partial w^{r}}{\partial \phi}
- \frac{\partial}{\partial r}(r w^{\phi})
\Big\}
\bm{\hat \theta}\\
&\quad
+ \frac{1}{r}
\Big\{
\frac{\partial}{\partial r}(r w^{\theta})
- \frac{\partial w^{r}}{\partial \theta}
\Big\}
\bm{\hat \phi}.
\end{split}
\end{align}

    \subsection{Scalar/Vector spherical harmonics}
    \label{sec.SH}

Let $Y_{lm}=Y_{lm}(\theta,\phi)$
denote the (scalar) spherical harmonics on $S$ defined by
\begin{equation}
Y_{lm}(\theta,\phi)
=\sqrt{\frac{2l+1}{4\pi}\frac{(l-m)!}{(l+m)!}}P_{lm}(\cos\theta) e^{\ii m\phi},
\quad
l\in \Z_{\ge0},
\mkern9mu
m=0,\pm 1,\ldots,\pm l
\label{fi}.
\end{equation}
Here $P_{lm}=P_{lm}(y)$ is the function defined by
\begin{equation*}
P_{lm}(x)
=
\left\{
\begin{array}{ll}
(-1)^{m}
(1-y^2)^{\frac{m}{2}}
\dfrac{\dd^{m}}{\dd y^{m}}
P_{l0}(y)
&\mbox{if}\ m\ge0,\\[10pt]
(-1)^{|m|}
\dfrac{(l-|m|)!}{(l+|m|)!}
P_{l|m|}(y)
&\mbox{if}\ m<0
\end{array}\right.
\end{equation*}
associated with the Legendre polynomial
\begin{equation}
P_{l0}(y)
=
\frac{1}{2^{l}l!}
\dfrac{\dd^{l}}{\dd y^{l}}
(y^{2}-1)^{l}.
\end{equation}
The spherical harmonics satisfy the orthogonality relations
\begin{equation} \label{sortho}
\langle Y_{lm}, Y_{l'm'} \rangle_{S}
=
\delta_{l, l'}\delta_{m, m'},
\end{equation}
where $\delta_{j,k}$ denotes the Kronecker delta.

\begin{lemma}\label{lem.SH.exp}
The following hold.
\begin{enumerate}[(1)]
\item\label{item1.lem.SH.exp}
For $f\in L^{2}(S)$, we have
\begin{align}\label{exp.S.SH}
f=
\sum_{l=0}^{\infty} \sum_{m=-l}^{l} 
\langle f, Y_{lm} \rangle_{S}
Y_{lm}.
\end{align}
The series on the right converges in the norm $\|\cdot\|_{L^{2}(S)}$.
\item\label{item2.lem.SH.exp}
For $l\in\Z_{\ge0}$, we have
\begin{align}\label{def.mu}
\Delta_{S} Y_{lm}  = - \mu_{l} Y_{lm},
\qquad 
\mu_{l} := l(l+1).
\end{align}

\item\label{item3.lem.SH.exp}
For $f\in C^\infty(S)$, we have
\begin{align}
|\langle f, Y_{lm} \rangle_{S}|
\lesssim_{\,f,k}
(1+l)^{-2k},
\quad
l\in\Z_{\ge0}.
\end{align}

\item\label{item4.lem.SH.exp}
For $f\in C^\infty(S)$, the series in \eqref{exp.S.SH} converges absolutely and uniformly on $S$.
\end{enumerate}
\end{lemma}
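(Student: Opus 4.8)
I will treat the four parts as the classical facts about scalar spherical harmonics that they are, and arrange the proof so that the eigenvalue identity \eqref{def.mu} (part~(2)) is established first and then drives parts~(3) and~(4), while the expansion \eqref{exp.S.SH} (part~(1)) rests on the density of polynomials. For part~(2), the plan is to observe that $r^{l}Y_{lm}(\theta,\phi)$ is a constant multiple of a harmonic homogeneous polynomial of degree $l$ on $\R^{3}$ — a consequence of the associated Legendre equation satisfied by $P_{lm}(\cos\theta)$ — and then to insert $u=r^{l}Y_{lm}$ into the spherical form \eqref{DpasS} of the Laplacian, which yields
\[
0=\Delta u=r^{l-2}\bigl(l(l+1)\,Y_{lm}+\Delta_{S}Y_{lm}\bigr),
\]
hence \eqref{def.mu}. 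Equivalently one can verify $\Delta_{S}Y_{lm}=-l(l+1)Y_{lm}$ directly from \eqref{DS} by separating the $\theta$- and $\phi$-dependence and invoking the Legendre ODE; I would keep whichever is shorter to write out.

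For part~(1), since the orthonormality \eqref{sortho} is already recorded, only the completeness of $\{Y_{lm}\}$ in $L^{2}(S)$ needs an argument. I would note that restrictions to $S$ of polynomials in $x^{1},x^{2},x^{3}$ are dense in $C(S)$ by the Stone--Weierstrass theorem, hence dense in $L^{2}(S)$, and that, using $|x|^{2}=1$ on $S$ together with the decomposition of a homogeneous polynomial into harmonic homogeneous pieces, each such restriction lies in the linear span of finitely many $Y_{lm}$. Orthonormality together with completeness then gives \eqref{exp.S.SH} with convergence in $\|\cdot\|_{L^{2}(S)}$ by the standard Hilbert space expansion.

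For part~(3), I would use that $-\Delta_{S}$ is symmetric on $C^{\infty}(S)$ — $S$ being a compact manifold without boundary, Green's identity carries no boundary term — so that, by \eqref{def.mu}, for every $l\ge1$ and every $k\in\Z_{\ge0}$,
\[
\langle f,Y_{lm}\rangle_{S}
=\mu_{l}^{-k}\,\langle f,(-\Delta_{S})^{k}Y_{lm}\rangle_{S}
=\mu_{l}^{-k}\,\langle (-\Delta_{S})^{k}f,Y_{lm}\rangle_{S}.
\]
By Cauchy--Schwarz and $\|Y_{lm}\|_{L^{2}(S)}=1$ this gives $|\langle f,Y_{lm}\rangle_{S}|\le\mu_{l}^{-k}\|(\Delta_{S})^{k}f\|_{L^{2}(S)}$; since $\mu_{l}=l(l+1)\approx(1+l)^{2}$ for $l\ge1$ and the case $l=0$ is trivial, this is the claimed estimate, with implicit constant $\|(\Delta_{S})^{k}f\|_{L^{2}(S)}$.

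For part~(4), I would combine part~(3) with the pointwise bound $\|Y_{lm}\|_{L^{\infty}(S)}\lesssim(1+l)^{1/2}$, which follows from the addition theorem $\sum_{m=-l}^{l}|Y_{lm}(\theta,\phi)|^{2}=\tfrac{2l+1}{4\pi}$ (or, more crudely, from Sobolev embedding on $S$ applied to the eigenfunction $Y_{lm}$). Choosing $k$ large enough, say $k=2$, one gets
\[
\sum_{l=0}^{\infty}\sum_{m=-l}^{l}|\langle f,Y_{lm}\rangle_{S}|\,\|Y_{lm}\|_{L^{\infty}(S)}
\lesssim_{\,f,k}\sum_{l=0}^{\infty}(2l+1)(1+l)^{-2k+1/2}<\infty,
\]
so the Weierstrass $M$-test gives absolute and uniform convergence of \eqref{exp.S.SH} on $S$; the uniform limit is continuous and agrees with the $L^{2}(S)$-limit $f$ almost everywhere, hence equals $f$ everywhere. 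I do not expect a genuine obstacle anywhere in this lemma: every step is standard, and the only points deserving a little care are the justification of the integration by parts on $S$, which is cleanest if one treats $S$ as a closed manifold rather than working in the singular $(\theta,\phi)$ chart, and the uniform bound on $Y_{lm}$ used for part~(4).
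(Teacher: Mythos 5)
Your proof is correct and follows essentially the same route as the paper: part (2) via $\Delta(r^{l}Y_{lm})=0$ in the spherical form of the Laplacian, part (3) via the symmetry of $\Delta_{S}$ on the closed manifold $S$ iterated $k$ times with Cauchy--Schwarz, and part (4) by pairing (3) with a polynomial-in-$l$ sup-norm bound on $Y_{lm}$ (the paper uses Sobolev embedding plus elliptic regularity to get $\|Y_{lm}\|_{L^{\infty}(S)}\lesssim \mu_{l}+1$, while your addition-theorem bound $(1+l)^{1/2}$ is sharper but serves the same purpose). For part (1) the paper simply cites the literature, whereas you sketch the standard completeness argument; that is a fine, if unnecessary, addition.
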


\begin{proof}
(\ref{item1.lem.SH.exp})
This is well known and documented, for example, in \cite[Section 5.6]{VMK1988book}.

(\ref{item2.lem.SH.exp})
This follows from the direct calculation of the left of $\Delta(r^l Y_{lm})=0$ using \eqref{DpasS}.

(\ref{item3.lem.SH.exp})
By integration by parts,
\[
-\mu_{l} \langle f, Y_{lm} \rangle_{S}
= \langle f, \Delta_{S} Y_{lm} \rangle_{S}
= \langle \Delta_{S} f, Y_{lm} \rangle_{S}.
\]
In the same manner, we have
\[
(-\mu_{l})^{k} \langle f, Y_{lm} \rangle_{S}
= \langle \Delta_{S}^{k} f, Y_{lm} \rangle_{S},
\quad k\in \Z_{\ge0}.
\]
Thus the assertion follows from
\[
|\langle f, Y_{lm} \rangle_{S}|
\le 
(\mu_{l}+1)^{-k}
\|\Delta_{S}^{k} f\|_{L^{2}(S)}
\|Y_{lm}\|_{L^{2}(S)}
=
(\mu_{l}+1)^{-k}
\|\Delta_{S}^{k} f\|_{L^{2}(S)}.
\]

(\ref{item4.lem.SH.exp})
By the Sobolev embedding theorem and by elliptic regularity of $\Delta_{S}$, we have
\begin{align}
\|Y_{lm}\|_{L^{\infty}(S)}
\lesssim
\|Y_{lm}\|_{W^{2,2}(S)}
\lesssim
\|\Delta_{S} Y_{lm}\|_{L^{2}(S)} + 1
\lesssim
\mu_{l}+1.
\end{align}
which combined with (\ref{item3.lem.SH.exp}) implies the assertion. The proof is complete.
\end{proof}

Next we introduce vector spherical harmonics with reference to \cite{BEG1985, VMK1988book}. In this paper, we will mainly adopt the notation used in \cite{BEG1985}. Let
\[
\bm{Y}_{lm}=\bm{Y}_{lm}(\theta,\phi),
\qquad
\bm{\Psi}_{lm}=\bm{\Psi}_{lm}(\theta,\phi),
\qquad
\bm{\Phi}_{lm}=\bm{\Phi}_{lm}(\theta,\phi)
\]
denote the vector spherical harmonics defined by
\begin{align}\label{def.VSH}
\begin{split}
\bm Y_{lm} = Y_{lm} \bm{\hat r},
\qquad
\bm \Psi_{lm} = r\nabla Y_{lm},
\qquad
\bm \Phi_{lm} = \bm{r} \times\nabla Y_{lm}.
\end{split}
\end{align}

More explicitly, we have
\begin{align}\label{Psi.Phi}
\begin{split}
\bm \Psi_{lm}(\theta,\phi) 
&=
(\partial_{\theta}Y_{lm}(\theta,\phi)) \bm{\hat \theta}
+ \frac{\partial_{\phi}Y_{lm}(\theta,\phi)}{\sin\theta} \bm{\hat \phi},
\quad l\ge1,\\
\bm \Phi_{lm}(\theta,\phi)
&
=
(\partial_{\theta}Y_{lm}(\theta,\phi)) \bm{\hat \phi}
- \frac{\partial_{\phi}Y_{lm}(\theta,\phi)}{\sin\theta} \bm{\hat \theta},
\quad l\ge1
\end{split}
\end{align}
Notice that, since $Y_{00}=\sqrt{1/4\pi}$,
\begin{align}\label{Psi.Phi.00}
\bm \Psi_{00} = \bm \Phi_{00} = 0.
\end{align}
The vector spherical harmonics satisfy the following relations:
\begin{align*}
\langle \bm{Y}_{lm}, \bm{Y}_{l'm'} \rangle_{S} 
&=\delta_{l,l'}\delta_{m,m'},\\
\langle \bm{\Psi}_{lm}, \bm{\Psi}_{l'm'} \rangle_{S} 
&=\langle \bm{\Phi}_{lm}, \bm{\Phi}_{l'm'} \rangle_{S}
= l(l+1)\delta_{l,l'}\delta_{m,m'},\\
\langle \bm{Y}_{lm}, \bm{\Psi}_{l'm'} \rangle_{S} 
&=\langle \bm{Y}_{lm}, \bm{\Phi}_{l'm'} \rangle_{S} 
=\langle \bm{\Psi}_{lm}, \bm{\Phi}_{l'm'} \rangle_{S}
=0.
\end{align*}

Let $f$ be a function of the variable $r$. A direct calculation leads to
\begin{align}\label{nabla.VSH}
\nabla (f Y_{lm})
= \frac{\dd f}{\dd r} \bm{Y}_{lm}
+ \frac{f}{r} \bm{\Psi}_{lm}.
\end{align}
Moreover, recalling $\mu_{l}=l(l+1)$ in \eqref{def.mu}, we have
\begin{align}\label{div.VSH}
\begin{split}
\nabla\cdot (f \bm{Y}_{lm})
&=
\Big(
\frac{\dd f}{\dd r} + \frac{2f}{r}
\Big) Y_{lm},\\
\nabla\cdot (f \bm{\Psi}_{lm})
&=
-\frac{\mu_{l} f}{r} Y_{lm},\\
\nabla\cdot (f \bm{\Phi}_{lm})
&=0
\end{split}
\end{align}
and
\begin{align}\label{rot.VSH}
\begin{split}
\nabla\times (f \bm{Y}_{lm})
&=
-\frac{f}{r} \bm \Phi_{lm},\\
\nabla\times (f \bm{\Psi}_{lm})
&=
\Big(
\frac{\dd f}{\dd r} + \frac{f}{r}
\Big) \bm \Phi_{lm},\\
\nabla\times (f \bm{\Phi}_{lm})
&=
-\frac{\mu_{l} f}{r} \bm{Y}_{lm}
- \Big(
\frac{\dd f}{\dd r} + \frac{f}{r}
\Big) \bm \Psi_{lm}.
\end{split}
\end{align}

\begin{lemma}\label{lem.VSH.exp}
The following hold.
\begin{enumerate}[(1)]
\item\label{item1.lem.VSH.exp}
For $w\in L^{2}(S)^{3}$, we have
\begin{equation}\label{exp.S.VSH}
w
= \sum_{l=0}^{\infty} \sum_{m=-l}^{l}
\big(
w^{r}_{lm} \bm{Y}_{lm}
+ w^{(1)}_{lm} \bm{\Psi}_{lm}
+ w^{(2)}_{lm} \bm{\Phi}_{lm}
\big)
\end{equation}
with the coefficients $w^{r}_{lm},w^{(1)}_{lm},w^{(2)}_{lm}$ given by
\begin{align}\label{coef.exp.S.VSH}
\begin{split}
w^{r}_{lm}
&=
\langle w, \bm{Y}_{lm}\rangle_{S},\\
w^{(1)}_{lm}
&=
\frac{1}{l(l+1)}
\langle w, \bm{\Psi}_{lm}\rangle_{S},
\quad l\ge1\\
w^{(2)}_{lm}
&=
\frac{1}{l(l+1)}
\langle w, \bm{\Phi}_{lm}\rangle_{S},
\quad l\ge1.
\end{split}
\end{align}
The series on the right converges in the norm $\|\cdot\|_{L^{2}(S)}$.

\item\label{item2.lem.VSH.exp}
For $w\in C^\infty(S)^{3}$, the series in \eqref{exp.S.VSH} converges absolutely and uniformly on $S$.
\end{enumerate}
\end{lemma}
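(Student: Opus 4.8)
The orthogonality relations stated just above show that, after rescaling $\bm{\Psi}_{lm}$ and $\bm{\Phi}_{lm}$ by $\mu_{l}^{-1/2}$, the three families form an orthonormal system in $L^{2}(S)^{3}$; hence the formulas \eqref{coef.exp.S.VSH} will follow at once by pairing \eqref{exp.S.VSH} with a single member of the system, and the real content of part (\ref{item1.lem.VSH.exp}) is its completeness. The plan is to prove completeness by showing that every $w\in C^{\infty}(S)^{3}$ lies in the closed span of $\{\bm{Y}_{lm},\bm{\Psi}_{lm},\bm{\Phi}_{lm}\}$ — in fact with uniform convergence, which will simultaneously give part (\ref{item2.lem.VSH.exp}) — and then to pass to general $w\in L^{2}(S)^{3}$ by density of $C^{\infty}(S)^{3}$. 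So I fix a smooth $w$ and split it pointwise on $S$ as $w=w^{r}\bm{\hat r}+w_{\mathrm{tan}}$, where $w^{r}=w\cdot\bm{\hat r}\in C^{\infty}(S)$ and $w_{\mathrm{tan}}$ is a smooth tangential field.

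First I would treat the radial part: Lemma \ref{lem.SH.exp}(\ref{item1.lem.SH.exp}) gives $w^{r}=\sum_{l,m}\langle w^{r},Y_{lm}\rangle_{S}\,Y_{lm}$, and since $\langle w^{r},Y_{lm}\rangle_{S}=\langle w,\bm{Y}_{lm}\rangle_{S}=w^{r}_{lm}$, multiplying by $\bm{\hat r}$ yields $w^{r}\bm{\hat r}=\sum_{l,m}w^{r}_{lm}\bm{Y}_{lm}$. For the tangential part I would use the Hodge decomposition of smooth tangential fields on $S=S_{1}$: there exist $\alpha,\beta\in C^{\infty}(S)$, which may be taken with zero mean, such that $w_{\mathrm{tan}}=\nabla_{S}\alpha+\bm{\hat r}\times\nabla_{S}\beta$ with $\nabla_{S}$ the surface gradient; the only nontrivial input here is that $S^{2}$ carries no nonzero harmonic $1$-form, so that a smooth tangential field which is both divergence-free and curl-free on $S$ must vanish, which is what makes $\alpha,\beta$ recoverable (by solving $\Delta_{S}\alpha=\mathrm{div}_{S}w_{\mathrm{tan}}$ and $\Delta_{S}\beta=\mathrm{div}_{S}(\bm{\hat r}\times w_{\mathrm{tan}})$, with smooth mean-zero right-hand sides, and invoking elliptic regularity of $\Delta_{S}$). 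By \eqref{def.VSH} one has $\bm{\Psi}_{lm}=\nabla_{S}Y_{lm}$ and $\bm{\Phi}_{lm}=\bm{\hat r}\times\nabla_{S}Y_{lm}$ on $S$, so expanding $\alpha=\sum_{l\ge1,m}\alpha_{lm}Y_{lm}$ and $\beta=\sum_{l\ge1,m}\beta_{lm}Y_{lm}$ — these series converge together with all their derivatives, by Lemma \ref{lem.SH.exp}(\ref{item3.lem.SH.exp}) and polynomial-in-$l$ bounds on the Sobolev norms of $Y_{lm}$, obtained as in the proof of Lemma \ref{lem.SH.exp}(\ref{item4.lem.SH.exp}) but with more derivatives — and applying $\nabla_{S}$ and $\bm{\hat r}\times\nabla_{S}$ termwise gives $w_{\mathrm{tan}}=\sum_{l\ge1,m}\big(\alpha_{lm}\bm{\Psi}_{lm}+\beta_{lm}\bm{\Phi}_{lm}\big)$ in $L^{2}(S)^{3}$. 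Adding the two contributions proves \eqref{exp.S.VSH} for smooth $w$, pairing with $\bm{\Psi}_{l'm'}$ and $\bm{\Phi}_{l'm'}$ and using orthogonality identifies $\alpha_{lm}=w^{(1)}_{lm}$, $\beta_{lm}=w^{(2)}_{lm}$, i.e. \eqref{coef.exp.S.VSH}, and a density argument extends everything to $w\in L^{2}(S)^{3}$.

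For part (\ref{item2.lem.VSH.exp}), I keep $w\in C^{\infty}(S)^{3}$, so that $w^{r}$, $\alpha$, $\beta$ are smooth on $S$. Then Lemma \ref{lem.SH.exp}(\ref{item3.lem.SH.exp}) applied to $w^{r},\alpha,\beta$ gives, for every $k$, $|w^{r}_{lm}|+|w^{(1)}_{lm}|+|w^{(2)}_{lm}|\lesssim_{\,w,k}(1+l)^{-2k}$. On the other hand, exactly as in the proof of Lemma \ref{lem.SH.exp}(\ref{item4.lem.SH.exp}), Sobolev embedding together with the elliptic regularity of $\Delta_{S}$ give $\|\bm{Y}_{lm}\|_{L^{\infty}(S)}=\|Y_{lm}\|_{L^{\infty}(S)}\lesssim\mu_{l}+1$ and, using one further derivative, $\|\bm{\Psi}_{lm}\|_{L^{\infty}(S)}+\|\bm{\Phi}_{lm}\|_{L^{\infty}(S)}\lesssim\|Y_{lm}\|_{W^{3,2}(S)}\lesssim(1+\mu_{l})^{2}$. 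Since $m$ ranges over $2l+1$ values for each $l$, each of the three sums in \eqref{exp.S.VSH} is then majorised on $S$ by a constant times $\sum_{l}(2l+1)(1+l)^{4}(1+l)^{-2k}$, finite for $k$ large, which gives absolute and uniform convergence on $S$.

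The step I expect to be the main obstacle is the completeness argument, and within it the Hodge-type decomposition of tangential fields on $S$ together with the vanishing of $H^{1}(S^{2})$: this is what reduces a tangential field to two scalar potentials and hence to the already-established scalar theory. It is also what forces the rapid decay of $w^{(1)}_{lm}$, $w^{(2)}_{lm}$ for smooth $w$, since the naive bound $|\langle w,\bm{\Psi}_{lm}\rangle_{S}|\le\|w\|_{L^{2}(S)}\,\mu_{l}^{1/2}$ is not summable, whereas passing through $\alpha$ trades the growing factor $\mu_{l}^{1/2}$ for arbitrary negative powers of $1+l$. Alternatively, if one prefers to avoid the differential geometry, the completeness and pointwise convergence of vector spherical harmonic expansions are classical and can be quoted from \cite{BEG1985} or \cite{VMK1988book}.
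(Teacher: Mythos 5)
Your proposal is correct, and it is more self-contained than what the paper actually does: for part (\ref{item1.lem.VSH.exp}) the paper simply cites the literature (\cite[Section 7.3]{VMK1988book}, \cite[Section 5.4]{FreGut2013book}) after recording the linear change of basis to the harmonics used there, and for part (\ref{item2.lem.VSH.exp}) it gives a one-line indication that the argument of Lemma \ref{lem.SH.exp} (\ref{item4.lem.SH.exp}) carries over. What you add is a genuine proof of completeness: the splitting $w=w^{r}\bm{\hat r}+w_{\mathrm{tan}}$, the reduction of the tangential part to two scalar potentials via the Hodge decomposition on $S^{2}$ (using $H^{1}(S^{2})=0$), and the observation that $\bm{\Psi}_{lm}$ and $\bm{\Phi}_{lm}$ are exactly $\nabla_{S}Y_{lm}$ and $\bm{\hat r}\times\nabla_{S}Y_{lm}$, so that everything collapses onto the scalar theory of Lemma \ref{lem.SH.exp}. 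Your route to the rapid decay of $w^{(1)}_{lm},w^{(2)}_{lm}$ — reading them off as the scalar coefficients of the smooth potentials $\alpha,\beta$ — is essentially the same computation as the direct integration by parts $\langle w,\bm{\Psi}_{lm}\rangle_{S}=-\langle \mathrm{div}_{S}\,w_{\mathrm{tan}},Y_{lm}\rangle_{S}$ that the paper's sketch presumably intends, and your correct diagnosis that the na\"ive bound $|\langle w,\bm{\Psi}_{lm}\rangle_{S}|\le\mu_{l}^{1/2}\|w\|_{L^{2}(S)}$ is useless is precisely the point that the paper's terse "(2) can be proved in a similar manner" glosses over. The only ingredient you invoke without proof is the Hodge decomposition itself; since you both sketch its construction (solving two Poisson equations on $S$ with mean-zero smooth data and killing the harmonic remainder) and note that one may instead quote \cite{BEG1985,VMK1988book}, this is acceptable. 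The trade-off is clear: the paper's citation is shorter, while your argument makes the lemma independent of the references at the cost of importing a small amount of Hodge theory on $S^{2}$.
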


\begin{proof}
(\ref{item1.lem.VSH.exp})
This is well known and is documented, for example, in \cite[Section 7.3]{VMK1988book}. We remark that the vector spherical harmonics $\bm{Y}_{lm}, \bm{\Psi}_{lm}, \bm{\Phi}_{lm}$ correspond to the ones 
$\bm{Y}_{lm}^{(1)}, \bm{Y}_{lm}^{(0)}, \bm{Y}_{lm}^{(-1)}$ in \cite[Section 7.3.1]{VMK1988book} 
 by the linear transformation
\[
\begin{bmatrix}
\bm{Y}_{lm}\\[3pt]
\bm{\Psi}_{lm}\\[3pt]
\bm{\Phi}_{lm}\\
\end{bmatrix}
=
\begin{bmatrix}
0 & 0 & 1\\[3pt]
\sqrt{l(l+1)} & 0 & 0\\[3pt]
0 & \ii \sqrt{l(l+1)} & 0\\
\end{bmatrix}
\begin{bmatrix}
\bm{Y}_{lm}^{(1)}\\[3pt]
\bm{Y}_{lm}^{(0)}\\[3pt]
\bm{Y}_{lm}^{(-1)}\\
\end{bmatrix}.
\]
For a detailed proof, otherwise, the reader may consult \cite[Section 5.4]{FreGut2013book}.

(\ref{item2.lem.VSH.exp})
This can be proved in a similar manner to Lemma \ref{lem.SH.exp} (\ref{item4.lem.SH.exp}) using the definition of $\bm Y_{lm}, \bm \Psi_{lm}, \bm \Phi_{lm}$ in \eqref{def.VSH} and Lemma \ref{lem.SH.exp} (\ref{item3.lem.SH.exp}). The proof is complete.
\end{proof}

For a smooth vector field $w$ on an open ball $B_{\rho}$ of radius $\rho$, we write
\begin{equation}\label{exp.VSH}
w(r,\theta,\phi)
= \sum_{l=0}^{\infty} \sum_{m=-l}^{l}
\big(
w^{r}_{lm}(r) \bm{Y}_{lm}
+ w^{(1)}_{lm}(r) \bm{\Psi}_{lm}
+ w^{(2)}_{lm}(r) \bm{\Phi}_{lm}
\big)
\end{equation}
with the coefficients $w^{r}_{lm},w^{(1)}_{lm},w^{(2)}_{lm}$ defined in \eqref{coef.exp.S.VSH}. If $w$ is smooth on $B_{R}$ for some $R>0$, the series in \eqref{exp.VSH} converges absolutely and uniformly on $B_{\rho}$ with any $0<\rho<R$.

    \subsection{Modified Bessel function}
    \label{MBF}

We collect here some facts about the modified Bessel functions. References are \cite{Wat1944book,AAR1999book}. The modified Bessel function of the first kind $I_\nu(z)$ of order $\nu$ is defined by 
\begin{align}\label{def.I}
\begin{split}
I_\nu(z) 
= 
\Big(\frac{z}{2}\Big)^\nu 
\sum_{k=0}^{\infty} 
\frac{1}{k!\Gamma(\nu+k+1)} \Big(\frac{z}{2}\Big)^{2k}, 
\quad 
z\in \C\setminus (-\infty,0], 
\end{split}
\end{align}
where $\Gamma(z)$ denotes the Gamma function, the second kind $K_\nu(z)$ of order $\nu\notin\Z$ is by 
\begin{align}\label{def.K}
K_\nu(z) 
= 
\frac{\pi}{2}
\frac{I_{-\nu}(z) - I_\nu(z)}{\sin(\nu \pi)}, 
\quad 
z\in \C\setminus (-\infty,0],
\end{align}
and $K_n(z)$ of order $n\in\Z$ is by the limit of $K_\nu(z)$ in \eqref{def.K} as $\nu\to n$. Moreover, $K_\nu(z)$ and $I_\nu(z)$ are linearly independent solutions of 
\[
-\frac{\dd^2 \omega}{\dd z^2} 
- \frac{1}{z} \frac{\dd \omega}{\dd z} 
+ \Big(1+\frac{\nu^2}{z^2}\Big) \omega = 0
\]
and the Wronskian is
\begin{align}\label{def.W}
\det 
\left(
\begin{array}{cc}
K_\nu(z) 
& I_\nu(z) \\ [5pt]
\displaystyle{\frac{\dd K_\nu}{\dd z}(z)} 
& \displaystyle{\frac{\dd I_\nu}{\dd z}(z)}
\end{array}
\right)
=
\frac1z. 
\end{align}
It is known that $I_\nu(z)$ and $K_\nu(z)$ satisfy
\begin{align}\label{eq.rel}
\begin{split}
\frac{\dd I_\nu}{\dd z}(z)
&
=\frac{\nu}{z} I_\nu(z) + I_{\nu+1}(z)
=-\frac{\nu}{z} I_\nu(z) + I_{\nu-1}(z), \\
\frac{\dd K_\nu}{\dd z}(z) 
&
=\frac{\nu}{z} K_\nu(z) - K_{\nu+1}(z)
=-\frac{\nu}{z} K_\nu(z) - K_{\nu-1}(z). 
\end{split}
\end{align}

The following lemma collects the properties of $I_\nu(z),K_\nu(z)$. Each of them easily follows from the formulas in \cite{Wat1944book,AAR1999book}. Let $\Sigma_{\phi} = \{z\in\C\setminus\{0\}~|~|\oparg z|<\phi\}$.

\begin{lemma}\label{lem.MBF}
Let $\nu>0$, $\delta\in(0,\pi/2)$ and $M>0$. Then the following hold.
\begin{enumerate}[(1)]
\item\label{item1.lem1.MBF}
For $z \in\Sigma_{\pi/2} \cap \{|z|< M\}$, we have
\[
|K_{\nu}(z)|
\approx_{\,\nu,\delta,M}
|z|^{-\nu},
\qquad
|I_{\nu}(z)|
\approx_{\,\nu,\delta,M}
|z|^{\nu}.
\]

\item\label{item2.lem1.MBF}
For $z\in\Sigma_{\pi/2-\delta} \cap \{|z|\ge M\}$, we have
\[
|K_{\nu}(z)|
\approx_{\,\nu,\delta,M}
|z|^{-1/2} e^{-\Re z},
\qquad
|I_{\nu}(z)|
\approx_{\,\nu,\delta,M}
|z|^{-1/2} e^{\Re z}.
\]
\end{enumerate}
\end{lemma}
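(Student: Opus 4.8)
The statement packages together the two classical regimes for the modified Bessel functions: the behaviour near $z=0$ and the behaviour as $|z|\to\infty$. I would treat Part (1) and Part (2) separately, and in each case reduce the two‑sided estimate to controlling a ``remainder factor'' lying between two fixed positive constants, closing the gap on the remaining bounded region by a compactness argument.

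For Part (1) (small $|z|$) I would start from the power series \eqref{def.I}. Writing $I_\nu(z)=(z/2)^\nu E_\nu(z)$ with $E_\nu(z)=\sum_{k\ge0}\frac{1}{k!\,\Gamma(\nu+k+1)}(z/2)^{2k}$, the function $E_\nu$ is entire, $E_\nu(0)=1/\Gamma(\nu+1)\neq0$, and on $|z|\le M$ the crude bound $|E_\nu(z)|\le E_\nu(|z|)\le E_\nu(M)<\infty$ gives $|I_\nu(z)|\lesssim_{\,\nu,M}|z|^\nu$. For the matching lower bound, continuity of $E_\nu$ at the origin gives $|E_\nu(z)|\ge \tfrac12/\Gamma(\nu+1)$ on a disc $|z|\le\delta_0(\nu)$; on the compact set $\overline{\Sigma_{\pi/2}}\cap\{\delta_0\le|z|\le M\}$ one invokes that $I_\nu$ has no zeros in the open right half‑plane (its nonzero zeros all lie on the imaginary axis), so that $|I_\nu(z)|/|z|^\nu$ is continuous and strictly positive there, hence bounded below; combining the two ranges of $|z|$ yields $|I_\nu(z)|\approx|z|^\nu$. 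For $K_\nu$ I would use \eqref{def.K} (and the limiting form when $\nu\in\Z$) to extract the leading singular behaviour $K_\nu(z)\sim \tfrac12\Gamma(\nu)(z/2)^{-\nu}$ as $z\to0$, together with the fact that $K_\nu$ is zero‑free in $|\arg z|\le\pi/2$, and argue in exactly the same way.

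For Part (2) (large $|z|$) I would invoke the classical large‑argument asymptotic expansions \cite{Wat1944book,AAR1999book}: uniformly for $|\arg z|\le\pi-\delta$,
\[
K_\nu(z)=\Big(\frac{\pi}{2z}\Big)^{1/2}e^{-z}\bigl(1+R^{K}_{\nu}(z)\bigr),\qquad \bigl|R^{K}_{\nu}(z)\bigr|\le \frac{C(\nu,\delta)}{|z|},
\]
and, uniformly for $|\arg z|\le\pi/2$,
\[
I_\nu(z)=\frac{e^{z}}{(2\pi z)^{1/2}}\bigl(1+R^{I}_{\nu}(z)\bigr)+\frac{e^{-z}}{(2\pi z)^{1/2}}\,e^{\pm(\nu+\frac12)\pi\ii}\,\bigl(1+\widetilde{R}^{I}_{\nu}(z)\bigr),
\]
with all remainders $O(|z|^{-1})$. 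On $\Sigma_{\pi/2-\delta}$ one has $\Re z\ge |z|\sin\delta$, so the second (``recessive'') term of $I_\nu$ is bounded by $C|z|^{-1/2}e^{-|z|\sin\delta}=o\bigl(|z|^{-1/2}e^{\Re z}\bigr)$. Choosing $M_0=M_0(\nu,\delta)$ so large that all remainder factors lie in $[1/2,3/2]$ and the recessive term is at most $\tfrac14 |z|^{-1/2}e^{\Re z}$ for $|z|\ge M_0$, the two displays give $|K_\nu(z)|\approx|z|^{-1/2}e^{-\Re z}$ and $|I_\nu(z)|\approx|z|^{-1/2}e^{\Re z}$ on $\Sigma_{\pi/2-\delta}\cap\{|z|\ge M_0\}$. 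If $M<M_0$, on the compact set $\overline{\Sigma_{\pi/2-\delta}}\cap\{M\le|z|\le M_0\}$ — which sits strictly inside the open right half‑plane, where $I_\nu$ and $K_\nu$ are zero‑free — the quantities $|K_\nu(z)|\,|z|^{1/2}e^{\Re z}$ and $|I_\nu(z)|\,|z|^{1/2}e^{-\Re z}$ are continuous and strictly positive, hence pinched between positive constants depending on $\nu,\delta,M$; combining the two ranges of $|z|$ completes the proof.

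The calculations are routine; the points that require care are the uniformity of the large‑$|z|$ expansions up to the edges $\arg z=\pm(\pi/2-\delta)$ of the sector and, relatedly, the bookkeeping of the recessive term of $I_\nu$ — it is precisely this term, which is not negligible near $\arg z=\pm\pi/2$ where $I_\nu$ genuinely vanishes (at $z=\ii j_{\nu,k}$), that forces the loss of $\delta$ in Part (2). The other ingredient one should not skip is the non‑vanishing of $I_\nu$ and $K_\nu$ in the open right half‑plane, which underlies every lower bound away from $z=0$.
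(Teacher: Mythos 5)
Your strategy (power series near the origin, the classical uniform large--argument expansions, and a compactness argument on the intermediate annulus) is sound, and Part (2) is correct as written: the compact set $\overline{\Sigma_{\pi/2-\delta}}\cap\{M\le|z|\le M_{0}\}$ stays away from the imaginary axis, so $I_\nu$ and $K_\nu$ are zero-free there and the pinching argument closes. (The paper itself offers no argument beyond a citation of the formulas in Watson and Andrews--Askey--Roy, so yours is an actual proof rather than a variant of theirs.)

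There is, however, a genuine gap in the lower bound for $|I_\nu|$ in Part (1). You run the compactness argument on $\overline{\Sigma_{\pi/2}}\cap\{\delta_0\le|z|\le M\}$ and justify strict positivity by the fact that $I_\nu$ has no zeros in the \emph{open} right half-plane; but the closure contains the imaginary axis, and --- as you yourself note when discussing Part (2) --- $I_\nu$ vanishes there, at $z=\pm\ii j_{\nu,k}$ with $j_{\nu,k}$ the positive zeros of $J_\nu$. Hence as soon as $M>j_{\nu,1}$ the infimum of $|I_\nu(z)|/|z|^{\nu}$ over your compact set is $0$; worse, since points of the open set $\Sigma_{\pi/2}\cap\{|z|<M\}$ accumulate at $\ii j_{\nu,1}$ (take $z=\ep+\ii j_{\nu,1}$ with $\ep\downarrow0$), the two-sided bound $|I_\nu(z)|\approx|z|^{\nu}$ fails on the set exactly as written in the lemma. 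This is as much a defect of the statement as of your proof: the appearance of $\delta$ in the implicit constants of Part (1) indicates that the intended domain is $\Sigma_{\pi/2-\delta}\cap\{|z|<M\}$ (and in the application one only ever has $z=\sqrt{\ii\tau}\,r$ with $|\oparg z|=\pi/4$). With that correction your argument goes through verbatim: the compact set $\overline{\Sigma_{\pi/2-\delta}}\cap\{\delta_0\le|z|\le M\}$ avoids the imaginary axis, $I_\nu$ is zero-free on it, and the lower bound follows. The $K_\nu$ half of Part (1) is unaffected, since $K_\nu$ is zero-free on the whole closed sector $|\oparg z|\le\pi/2$, though you should state that fact (with a reference) rather than leave it implicit.
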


    \section{Proof of Theorem \ref{thma}}
    \label{sec-proofA}

In this section, we prove Theorem \ref{thma}. Let $K\subset \R^{4}_{+}$ be a compact set such that $K^{\rmc}(t)$ is connected for all $t\in\R$. Let $(v,q)$ be a solution of the Stokes system \eqref{intro.eq.S} on some open neighborhood $\mcO$ of $K$ and assume that $(v,q)$ is smooth in $\mcO$. Let $\ep>0$ be given.

Without loss of generality, we may assume that
\begin{align}\label{assum1.O}
K
\subset V
\Subset \mcO
\subset B_{R}\times(0,T)
\end{align}
for some open set $V$ and $R,T>0$. Furthermore, we may assume that $\mcO$ and $V$ have the same number of finite connected components, that is, in disjoint unions,
\begin{align}\label{assum2.O}
\mcO = \mcO_{1}\sqcup \cdots \sqcup \mcO_{N},
\qquad
V = V_{1}\sqcup \cdots \sqcup V_{N}
\end{align}
and that the following hold: for all $i\in\{1,\ldots,N\}$,
\begin{equation}\label{assum3.O}
\left\{
\begin{array}{ll}
V_{i}\subset \mcO_{i},\\[2pt]
\text{$\mcO_{i}(t)$ and $V_{i}(t)$ are connected for all $t\in\R$},\\[2pt]
\text{$\mcO_{i}(t)\setminus\overbar{V_{i}(t)}$ is connected for all $t\in\R$},\\[2pt]
\text{$\partial\big(\mcO_{i}(t)\setminus\overbar{V_{i}(t)}\big)$ is smooth for all $t\in\R$}.
\end{array}\right.
\end{equation}

The proof of Theorem \ref{thma} is split into $5$ steps. We first extend $v$ to a global solution $\tilde{v}$ of the Stokes system with distributed forcing in Section \ref{Ext}. Then in Section \ref{App-disc} we establish that $\tilde{v}$ can be approximated on $K$, up to $\ep$, by a solution $v_{1}$ of the Stokes system with a forcing which corresponds to a finite number of Dirac masses outside of a space-time cylinder containing $K$. Next in Section \ref{Fourier-exp}, with intention to approximate $\tilde{v}$ by a global solution of the Stokes system without forcing, we expand $v_{1}$ into the series of the vector spherical harmonics. In Section \ref{Trunc}, by making a suitable truncation to this series expansion, we find an approximation $v_{2}$ of $v_{1}$ written as a finite sum of terms. In Section \ref{Global}, based on the ODE theory, we verify that $v_{2}$ can be extended to a global solution of the Stokes system without forcing, uniquely in the class \eqref{growth.thma}. Then, by gathering all the estimates for approximation, putting $u=v_{2}|_{\R^{4}_{+}}$ completes the proof of Theorem \ref{thma}.

    \subsection{Extension into a global solution with distributed forcing}
    \label{Ext}

The first step consists of extending the local solution $v$ of the homogeneous (without forcing) Stokes system \eqref{intro.eq.S} in $K$ to a global solution $\tilde{v}$ of the Stokes system with distributed forcing. Moreover, as this forcing is integrable and compactly supported, $\tilde{v}$ is given in terms of the fundamental solution of the Stokes system which we now recall.

We recall here the expression of the tensor Green's function associated with the Stokes system in the full system; see \cite[Chapter 4]{Lad1969book}. 
Define the tensor $\Gamma=(\Gamma^{1},\Gamma^{2},\Gamma^{3})$ by
\begin{align}\label{def.fund.sol.1}
\Gamma^{k}
=
-\Delta \gamma^{k} + \nabla(\nabla\cdot \gamma^{k}).
\end{align}
Here the vector $\gamma^{k}=\gamma^{k}(x,t)$ solves the system
\begin{align}\label{def.fund.sol.2}
\left\{
\begin{array}{ll}
(\partial_{t} - \Delta) \Delta \gamma^{k}
= \delta_{\{x=0\}} \delta_{\{t=0\}} {\bf e}_{k}
&\mbox{if}\ t>0,\\
\gamma^{k}=0
&\mbox{if}\ t<0,
\end{array}
\right.
\end{align}
where $\delta_{\{\cdot\}}$ is the Dirac mass supported in $\{\cdot\}$ and ${\bf e}_{k}^{j}=\delta_{j,k}$, with explicit representation
\begin{align}\label{def.fund.sol.3}
\gamma^{k}(x,t)
=
\bigg\{
\frac{1}{(4\pi)^{\frac{5}{2}} t^{\frac32}} 
\int_{\R^{3}} \frac{1}{|x-z|}
e^{-\frac{|z|^2}{4 t}} \dd z
\bigg\}
{\bf e}_{k},
\quad
t>0.
\end{align}
Moreover, define the vector $\Pi$ by 
\begin{align}\label{def.fund.sol.4}
\Pi(x)
= \nabla
\bigg(-\frac{1}{4\pi|x|}
\bigg).
\end{align}
Then, for given $f=f(x,t)$ with suitable decay at infinity, the pair
\begin{align*}
\begin{split}
v(x,t) 
&=
\int_{\R^{4}}
\Gamma(x-y,t-s) f(y,s) \dd y \dd s,\\
q(x,t) 
&=
\int_{\R^{3}}
\Pi(x-y) \cdot f(y,t) \dd y
\end{split}
\end{align*}
satisfies the Stokes system with forcing $f$:
\begin{equation*}
\left\{
\begin{array}{ll}
\partial_{t} v - \Delta v + \nabla q
= f&\mbox{in}\ \R^{4},\\
\nabla\cdot v
= 0&\mbox{in}\ \R^{4}.
\end{array}\right.
\end{equation*}
The solutions are unique in suitable classes thanks to the Liouville theorem \cite{JSS2012}.

Let $(v,q)$ be given as in Theorem \ref{thma} and recall \eqref{assum1.O}--\eqref{assum3.O}. Then we have the following.

\begin{lemma}\label{lem.Ext}
There is $\varphi \in L^{1}(\R^{4})^3$ supported in $B_{R}\times(0,T)$
such that
\begin{align}\label{rep.lem.Ext}
\begin{split}
\tilde{v}(x,t)
&=\int_{\R^{4}} \Gamma(x-y,t-s)\varphi(y,s)\dd y\dd s,\\
\tilde{q}(x,t)  &=\int_{\R^{3}}
\Pi(x-y) \cdot \varphi(y,t) \dd y
\end{split}
\end{align}
satisfies 
\begin{equation}\label{cons.lem.Ext}
(\tilde{v},\tilde{q})
=(v,q)
\quad \text{on} \mkern9mu K.
\end{equation}
\end{lemma}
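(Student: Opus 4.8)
The plan is to construct the forcing $\varphi$ by a standard cut-off procedure. First I would pick, for each $i\in\{1,\ldots,N\}$, a smooth function $\chi_i$ which equals $1$ on a neighborhood of $\overbar{V_i}$ and is supported in $\mcO_i$, and set $\chi=\sum_i\chi_i$, so that $\chi\equiv 1$ near $\overbar V$ and $\opsupp\chi\subset\mcO\subset B_R\times(0,T)$. Then define $\tilde v=\chi v$ and $\tilde q=\chi q$ on $\mcO$, extended by $0$ outside. Since $(v,q)$ solves \eqref{intro.eq.S} on $\mcO$ and $\chi$ is constant (equal to $1$) where it is nonconstant transitions do not occur — more precisely, on the region where $\chi$ is not locally constant, $v$ and $q$ are still smooth solutions — the pair $(\tilde v,\tilde q)$ satisfies the Stokes system with a forcing
\begin{align*}
\varphi := \partial_t(\chi v) - \Delta(\chi v) + \nabla(\chi q)
= (\partial_t\chi)\,v - (\Delta\chi)\,v - 2\nabla\chi\cdot\nabla v + (\nabla\chi)\,q,
\end{align*}
which is smooth and supported in $\opsupp\nabla\chi\cup\opsupp\partial_t\chi\subset\mcO\setminus\overbar V\subset B_R\times(0,T)$; in particular $\varphi\in L^1(\R^4)^3$ with the required support. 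A minor point: $(\tilde v,\tilde q)$ need not be divergence-free, so strictly one should either absorb the nonzero $\nabla\cdot\tilde v$ term into the formulation or, more cleanly, apply the Bogovskii-type correction — but here the cleanest route is simply to \emph{define} $\tilde v,\tilde q$ by the representation \eqref{rep.lem.Ext} with this $\varphi$, and then verify \eqref{cons.lem.Ext} separately.

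The second and essential step is to check that the solution produced by the fundamental-solution representation \eqref{rep.lem.Ext} with the above $\varphi$ actually coincides with $(v,q)$ on $K$. The point is that $(\chi v,\chi q)$ is \emph{a} solution of the Stokes system with forcing $\varphi$ (after correcting for the divergence, or interpreting the system appropriately) which agrees with $(v,q)$ on a neighborhood of $K$ (where $\chi\equiv 1$), and the representation \eqref{rep.lem.Ext} produces \emph{the} solution in the class where the Liouville-type uniqueness theorem of \cite{JSS2012} applies. So I would invoke uniqueness: both $(\tilde v,\tilde q)$ from \eqref{rep.lem.Ext} and $(\chi v,\chi q)$ solve the same forced Stokes system, vanish for $t<0$, and lie in the admissible uniqueness class (the kernel estimates for $\Gamma$ and $\Pi$ in \eqref{def.fund.sol.3}–\eqref{def.fund.sol.4} give polynomial growth bounds, while $\chi v,\chi q$ are compactly supported); hence they agree everywhere, and in particular on $K$ where $\chi v=v$, $\chi q=q$.

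The main obstacle I anticipate is the divergence-free issue: $\chi v$ is not solenoidal, so one cannot directly say $(\chi v,\chi q)$ solves \eqref{app}-type system. The fix is to replace $\chi v$ by $\chi v - w$, where $w$ solves $\nabla\cdot w = \nabla\chi\cdot v = \nabla\cdot(\chi v)$ with $w$ supported in $\mcO\setminus\overbar V$ — this is possible precisely because each connected component $\mcO_i(t)\setminus\overbar{V_i(t)}$ is connected with smooth boundary (assumption \eqref{assum3.O}) and because $\int \nabla\cdot(\chi v)\,dx = 0$ over each such component (since $\chi v$ vanishes near its boundary), so the Bogovskii operator applies time-slice by time-slice with smooth dependence on $t$. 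One then sets $\tilde v = \chi v - w$, $\tilde q = \chi q$, recomputes $\varphi = \partial_t\tilde v - \Delta\tilde v + \nabla\tilde q$ (still smooth, still supported in $B_R\times(0,T)$, still $L^1$), and runs the uniqueness argument as above; since $w\equiv 0$ near $K$, we still get $(\tilde v,\tilde q)=(v,q)$ on $K$. The remaining steps — verifying the support and integrability of $\varphi$, and checking that the representation \eqref{rep.lem.Ext} lands in the uniqueness class — are routine given the explicit kernels and the cited Liouville theorem.
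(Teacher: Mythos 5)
Your proposal follows essentially the same route as the paper: cut off with $\chi$, correct the divergence defect $\nabla\chi\cdot v$ by a Bogovskii operator supported in $\mcO\setminus\overbar V$ slice by slice in time, set $\tilde v=\chi v-\opbog[\nabla\chi\cdot v]$, $\tilde q=\chi q$, and identify this compactly supported solution of the forced Stokes system with the representation \eqref{rep.lem.Ext} via the Liouville theorem of \cite{JSS2012}. The only slip is your justification of the mean-zero compatibility condition on $A_i=\mcO_i(t)\setminus\overbar{V_i(t)}$: $\chi v$ does \emph{not} vanish near the inner boundary $\partial V_i(t)$ (there $\chi\equiv1$), so the flux $\int_{\partial V_i(t)}v\cdot n\dd\sigma$ is not trivially zero and must instead be killed by a second application of the divergence theorem on the connected set $V_i(t)$ using $\nabla\cdot v=0$ --- which is exactly the step the paper carries out.
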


\begin{proof}
Let $\chi=\chi(x,t)$  be a smooth cut-off function such that 
\[
\chi(x,t)=1
\quad \text{on} \mkern9mu \overbar{V}
\qquad \text{and} \qquad
\opsupp \chi \subset \mcO .
\]
We introduce a Bogovskii operator; see \cite{BorSoh1990} for example. Fix $t\in(0,T)$ and set 
\[
A_{i} = \mcO_{i}(t)\setminus\overbar{V_{i}(t)}.
\]
Then let $\opbog_{i}=\opbog_{i}[\,\cdot\,]$ denote a Bogovskii operator on the annular domain $A_{i}$ which maps $f\in L^{2}(A_{i})$ with $\int_{A_{i}} f=0$ to $\opbog_{i}[f]=:w$ belonging to $H^{1}(A_{i})^{3}$ and solving
\begin{equation*}
\left\{
\begin{array}{ll}
\nabla\cdot w=f&\mbox{in}\ A_{i},\\
w = 0&\mbox{on}\ \partial A_{i}.
\end{array}\right.
\end{equation*}
We will apply $\opbog_{i}$ on $A_{i}$ to
\[
f:=(\nabla\chi\cdot v)(\cdot,t)|_{A_{i}} \in L^{2}(A_{i}).
\]
The condition $\int_{A_{i}} f=0$ is verified as follows. By the Gauss divergence theorem, we have
\[
\int_{A_{i}} f
= \int_{A_{i}} \nabla\cdot(\chi v)(\cdot,t)
= \int_{\partial A_{i}} (\chi v)(\cdot,t)\cdot n \dd \sigma.
\]
Here $n$ denotes the outward unit normal vector to $\partial A_{i}$ and $\dd \sigma$ the surface measure of $A_{i}$. The definition of the cut-off function $\chi$ implies that
\[
\int_{\partial A_{i}} (\chi v)(\cdot,t)\cdot n \dd \sigma
= \int_{\partial V_{i}(t)} v(\cdot,t)\cdot n \dd \sigma.
\]
Since $V_{i}(t)$ is a connected open set in $\R^{3}$, by the Gauss divergence theorem again, we have
\[
\int_{\partial V_{i}(t)} v(\cdot,t)\cdot n \dd \sigma
= \int_{V_{i}(t)} \nabla\cdot v(\cdot,t)
= 0.
\]
Thus $\int_{A_{i}} f=0$ holds and $\opbog_{i}$ is applicable to $f=(\nabla\chi\cdot v)(\cdot,t)|_{A_{i}}$. Moreover, thanks to \cite[Theorem 2.4]{BorSoh1990}, $\opbog_{i}[(\nabla\chi\cdot v)(\cdot,t)|_{A_{i}}]$ is smooth and compactly supported in $A_{i}$.

Define
\[
\opbog[(\nabla\chi\cdot v)(\cdot,t)]
=\sum_{i=1}^{N} \opbog_{i}[(\nabla\chi\cdot v)(\cdot,t)|_{A_{i}}].
\]
Then we extend $\opbog[(\nabla\chi\cdot v)(\cdot,t)]$ to $\R^{3}$ by zero and then to $\R^{4}$ by moving $t\in\R$. This extension will be denoted by $\opbog[(\nabla\chi\cdot v)]$ again in the following. Notice that
\begin{align}\label{supp.bog.prf.lem.Ext}
\opsupp\opbog[(\nabla\chi\cdot v)(\cdot,t)]
\subset
\mcO\setminus\overbar{V}.
\end{align}

Now we set
\begin{align}\label{def.tilde.vq.prf.lem.Ext}
\tilde{v}
= \chi v - \opbog[\nabla\chi\cdot v]
\qquad \text{and} \qquad
\tilde{q}
= \chi q.
\end{align}
Then $(\tilde{v},\tilde{q})$ satisfies
\begin{equation}\label{eq.prf.lem.Ext}
\left\{
\begin{array}{ll}
\partial_{t} \tilde{v} - \Delta\tilde{v} + \nabla\tilde{q}
= \varphi&\mbox{in}\ \R^{4},\\
\nabla\cdot \tilde{v}
= 0&\mbox{in}\ \R^{4},\\
\tilde{v}
= 0&\mbox{on}\ \R^{3}\times\{0\}
\end{array}\right.
\end{equation}
with forcing $\varphi$ with support in $B_{R}\times(0,T)$ defined by
\begin{align*}
\varphi
= \big((\partial_{t} - \Delta)\chi\big)v
- 2\nabla\chi\cdot\nabla v
+ (\nabla\chi)q
- (\partial_{t} - \Delta)\opbog[\nabla\chi\cdot v].
\end{align*}
Moreover, $(\tilde{v},\tilde{q})$ is supported in $B_{R}\times(0,T)$ and \eqref{cons.lem.Ext} holds by \eqref{supp.bog.prf.lem.Ext}.

It remains to prove that $\varphi$ belongs to $L^{1}(\R^{4})^{3}$ and that the representation \eqref{rep.lem.Ext} holds. Estimates of the operator $\opbog$ in \cite[Theorem 2.4]{BorSoh1990} show that
\begin{align*}
\begin{split}
&\|(\partial_{t} - \Delta)\opbog[\nabla\chi\cdot v]\|_{L^{2}(B_{R}\times(0,T))}\\
&\lesssim
\|\opbog[\partial_{t}(\nabla\chi\cdot v)]\|_{L^{2}(B_{R}\times(0,T))}
+ \|\Delta\opbog[\nabla\chi\cdot v]\|_{L^{2}(B_{R}\times(0,T))}\\
&\lesssim
\|(v, \nabla v, \partial_{t} v)\|_{L^{2}(B_{R}\times(0,T))}.
\end{split}
\end{align*}
Thus we obtain
\begin{equation}
\label{esti-bog}
\|\varphi\|_{L^{1}(B_{R}\times(0,T))}
\lesssim
\|(v, \nabla v, \partial_{t} v, q)\|_{L^{2}(B_{R}\times(0,T))}.
\end{equation}
Since both $\tilde{v}$ and $\tilde{q}$ decay at spatial infinity by \eqref{def.tilde.vq.prf.lem.Ext}, the representation \eqref{rep.lem.Ext} follows from the Liouville theorem \cite{JSS2012} applied to \eqref{eq.prf.lem.Ext} with $\varphi=0$. This completes the proof.
\end{proof}

    \subsection{Approximation by  a global solution with discrete forcing}
    \label{App-disc}

In this second step, we will establish that the velocity field $\tilde{v}$ in Lemma \ref{lem.Ext} can be approximated on $K$, up to $\ep$, by a global solution of the Stokes system with forcing having only a finite number of poles, all outside of $K$. Recall the representation \eqref{rep.lem.Ext} and \eqref{assum1.O}--\eqref{assum3.O}. 

Let $L_{1},L_{2}\subset\R^{4}$ be open sets such that
\[
\opsupp\varphi \subset L_{1} \Subset \mcO
\qquad \text{and} \qquad
V \Subset \overbar{L_{1}}^{{\rm c}}
\]
and that 
\begin{equation*}
\left\{
\begin{array}{ll}
V \Subset L_{2} \subset L_{1}^{{\rm c}}\cap (B_{R}\times(0,T)),\\[2pt]
\opsupp\varphi \subset \overbar{L_{2}}^{{\rm c}},\\[2pt]
\text{$\overbar{L_{2}}^{{\rm c}}$ is a connected open set},\\[2pt]
\text{$\overbar{L_{2}}^{{\rm c}}(t)$ is connected for all $t\in\R$.}
\end{array}\right.
\end{equation*}

The following lemma corresponds to \cite[Lemma 3.3]{EGFPS2019}.

\begin{lemma}\label{lem.conv}
Let $\varphi\in L^1(\R^{4})^{3}$ be of compact support. For any $U\subset\R^{4}$ with $\opsupp\varphi\subset U$ and $\ep>0$, 
there are 
$I\in\N$,
$\{(z_{i},r_{i})\}_{i=1}^{I}\subset \opsupp\varphi$,
and $\{d_{i}\}_{i=1}^{I}\subset\R^3$ 
such that
\begin{align*}
\bigg\|
\int_{\R^{4}} \Gamma(\,\cdot\,-y,\,\cdot\,-s) \varphi(y,s)\dd y\dd s
- \sum_{i=1}^{I}  \Gamma(\,\cdot\,-z_{i},\,\cdot\,-r_{i}) d_{i}
\bigg\|_{C(U^{\rm c})} 
< \ep. 
\end{align*}
\end{lemma}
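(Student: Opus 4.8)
The plan is to reduce the approximation of the convolution integral by a finite sum of translated fundamental solutions to a standard Runge-type ``sweeping of poles and discretization'' argument, exploiting the continuity of $\Gamma$ away from its singularity and a Hahn--Banach duality argument.

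First I would note that since $\opsupp\varphi$ is compact, contained in $U$, and $\Gamma(x-y,t-s)$ is smooth (in particular continuous and bounded) for $(x,t)\in U^{\rm c}$ and $(y,s)\in\opsupp\varphi$ — because these two sets are disjoint and have positive distance — the map $(y,s)\mapsto \Gamma(\,\cdot\,-y,\,\cdot\,-s)$ is a continuous map from the compact metric space $\opsupp\varphi$ into the Banach space $C(U^{\rm c})$ (or rather $C$ of a suitable large compact subset; one may first restrict $U^{\rm c}$ to a bounded set since $\Gamma\to 0$ at spatial infinity and away from $t=s$, so the $C(U^{\rm c})$ norm is controlled on a compact exhaustion). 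Consequently the vector-valued integral $\int_{\R^4}\Gamma(\,\cdot\,-y,\,\cdot\,-s)\varphi(y,s)\dd y\dd s$ is a Bochner integral of a continuous $C(U^{\rm c})$-valued function against the finite measure $\varphi\,\dd y\dd s$, hence lies in the closed convex hull (scaled by $\|\varphi\|_{L^1}$) of the set $\{\Gamma(\,\cdot\,-z,\,\cdot\,-r)\,d : (z,r)\in\opsupp\varphi,\ d\in\R^3,\ |d|\le 1\}$ inside $C(U^{\rm c})$.

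Next I would invoke the standard consequence of the Hahn--Banach theorem: a Bochner integral of a continuous function with values in a Banach space $X$, taken over a compact set against a finite measure, can be approximated in $X$-norm, to within any $\ep>0$, by a finite convex combination (here, a finite linear combination with the total mass bounded by $\|\varphi\|_{L^1}$) of point values of the integrand. Applying a partition of $\opsupp\varphi$ into small pieces on which $(y,s)\mapsto\Gamma(\,\cdot\,-y,\,\cdot\,-s)$ varies by at most $\ep/\|\varphi\|_{L^1}$ in $C(U^{\rm c})$ norm (possible by uniform continuity on the compact set), and choosing in each piece a representative point $(z_i,r_i)\in\opsupp\varphi$ together with the vector $d_i=\int_{\text{piece}}\varphi(y,s)\dd y\dd s\in\R^3$, one gets
\begin{align*}
\Big\|\int_{\R^4}\Gamma(\,\cdot\,-y,\,\cdot\,-s)\varphi(y,s)\dd y\dd s-\sum_{i=1}^I\Gamma(\,\cdot\,-z_i,\,\cdot\,-r_i)d_i\Big\|_{C(U^{\rm c})}\le\sum_{i=1}^I\int_{\text{piece }i}\|\Gamma(\,\cdot\,-y,\,\cdot\,-s)-\Gamma(\,\cdot\,-z_i,\,\cdot\,-r_i)\|_{C(U^{\rm c})}|\varphi(y,s)|\dd y\dd s<\ep.
\end{align*}

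The main obstacle, as flagged in the introduction of the paper, is that $\Gamma$ is matrix-valued and its components are built from the Riesz transforms of a heat-type kernel, which are not bounded on $L^\infty$; so one must be careful that the relevant norm here is genuinely the $C(U^{\rm c})$ norm of $\Gamma$ evaluated away from its singularity, not an operator norm of a singular integral. The point — this is the ``off-diagonal'' observation — is that $U^{\rm c}$ and $\opsupp\varphi\subset U$ are separated, so on the region that matters $\Gamma$ and all its derivatives are smooth with no singular-integral issue; the kernel bounds needed (decay at spatial infinity, smoothness off the diagonal $t=s$, and the appropriate behavior as $t\downarrow s$ away from $x=y$) follow from the explicit representation \eqref{def.fund.sol.1}--\eqref{def.fund.sol.3}, exactly as for the heat kernel in \cite{EGFPS2019}. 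A secondary technical point is that $U^{\rm c}$ is unbounded, so one needs to observe that $\Gamma(x-y,t-s)\to0$ as $|x|\to\infty$ or $|t-s|\to\infty$ uniformly for $(y,s)$ in the compact support, which lets one replace $U^{\rm c}$ by a large enough compact set without loss; only then is $C(U^{\rm c})$ a genuine Banach space in which the Bochner-integral argument applies cleanly.
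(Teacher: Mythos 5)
Your overall strategy --- discretizing the convolution by a Riemann sum over a partition of $\opsupp\varphi$, with $d_i$ the mass of $\varphi$ on each cell --- is exactly what the paper intends by ``parallel to [EGFPS2019, Lemma 3.3]''. However, the step you rest everything on is false for the Stokes kernel: the map $(y,s)\mapsto\Gamma(\,\cdot\,-y,\,\cdot\,-s)$ is \emph{not} continuous from $\opsupp\varphi$ into $C(U^{\rm c})$. From \eqref{def.fund.sol.1}--\eqref{def.fund.sol.3} one has $\Gamma_{jk}(w,\sigma)=\delta_{jk}G(w,\sigma)+\partial_j\partial_k\big(E*G(\cdot,\sigma)\big)(w)$ with $G$ the heat kernel and $E(w)=(4\pi|w|)^{-1}$; equivalently $\hat\Gamma(\xi,\sigma)=e^{-|\xi|^2\sigma}(I-\xi\xi^{\top}/|\xi|^2)$ for $\sigma>0$. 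Hence $\Gamma(w,\sigma)\to\nabla^2E(w)\neq0$ as $\sigma\to0^{+}$ for fixed $w\neq0$, while $\Gamma(w,\sigma)=0$ for $\sigma<0$: the Leray projection of a point force acts instantaneously in space, so $\Gamma$ jumps across $\{\sigma=0\}$ away from $w=0$. The space--time separation of $U^{\rm c}$ from $\opsupp\varphi$ does not prevent a point $(x,t)\in U^{\rm c}$ from sharing its time coordinate with a point of $\opsupp\varphi$ (in the application $U=L_1$ is bounded, so this certainly occurs), and therefore $\|\Gamma(\,\cdot\,-y,\,\cdot\,-s)-\Gamma(\,\cdot\,-y,\,\cdot\,-s')\|_{C(U^{\rm c})}\gtrsim\delta^{-3}$ however close $s'<s$ are. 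Your parenthetical ``the appropriate behavior as $t\downarrow s$ away from $x=y$ \ldots exactly as for the heat kernel'' is precisely the point where the Stokes system differs from the heat equation, and it is the false step.

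The lemma is nonetheless true and your discretization can be repaired. For fixed $(x,t)\in U^{\rm c}$, split the cells of mesh $h$ into those on which $s-t$ has constant sign --- there $\Gamma$ is uniformly continuous up to $\{\sigma=0^{\pm}\}$ on $\{|(w,\sigma)|\ge\delta/2\}$, giving an error $\lesssim\omega(h)\|\varphi\|_{L^1}$ --- and those meeting the hyperplane $\{s=t\}$, which lie in the slab $\{|s-t|\le h\}$ and contribute at most $C\delta^{-3}\sup_{t}\int_{\{|s-t|\le h\}}|\varphi|\dd y\dd s$, a quantity tending to $0$ with $h$ by absolute continuity of the integral of the single $L^1$ function $\varphi$ (uniformly in $t$). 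Alternatively, and more in the spirit of the paper, one can bypass continuity entirely by the same Hahn--Banach duality used in Lemma \ref{lem.poles}: if $\mu\in\mcM(U^{\rm c})^{3}$ annihilates every $\Gamma(\,\cdot\,-z,\,\cdot\,-r)d$ with $(z,r)\in\opsupp\varphi$ and $d\in\R^3$, then Fubini (legitimate since $|\Gamma|\lesssim\delta^{-3}$ on $U^{\rm c}\times\opsupp\varphi$ and both measures are finite) gives $\int\tilde v\cdot\dd\mu=\int F\cdot\varphi\dd y\dd s=0$, where $F(y,s)=\int\Gamma(x-y,t-s)^{\top}\dd\mu(x,t)$ vanishes on $\opsupp\varphi$ by hypothesis. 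Either route closes the gap; as written, your argument does not.
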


\begin{proof}
The proof is parallel to the one of \cite[Lemma 3.3]{EGFPS2019} and thus omitted here.
\end{proof}

Applying Lemma \ref{lem.conv} to $\tilde{v}$ defined in Lemma \ref{lem.Ext} with $U=L_{1}$, we have
\begin{align}\label{inqe1.App-disc}
\bigg\|\tilde{v}
- \sum_{i=1}^{I} \Gamma(\,\cdot\,-z_{i},\,\cdot\,-r_{i})d_{i}
\bigg\|_{C(L_{1}^{{\rm c}})}
<\ep
\end{align}
for some $I\in\N$,
$\{(z_{i},r_{i})\}_{i=1}^{I}\subset \opsupp\varphi$, and $\{d_{i}\}_{i=1}^{I}\subset\R^3$.

The following lemma corresponds to \cite[Lemma 3.2]{EGFPS2019}.

\begin{lemma}\label{lem.poles}
Let $U\subset\R^{4}$ be a connected open set such that $U(t)$ is connected for all $t\in\R$. Fix $(y,s)\in U$ and a bounded connected open set $M\subset U$ such that $M(s)\neq\emptyset$. For any $\ep>0$, 
there are $J\in\N$,
$\{(y_j,s_j)\}_{j=1}^{J}\subset M$,
and $\{b_j\}_{j=1}^{J}\subset\R$ such that 
\begin{align}\label{est.lem.poles}
\bigg\|
\Gamma(\,\cdot\,-y,\,\cdot\,-s)
- \sum_{j=1}^{J} b_j \Gamma(\,\cdot\,-y_j,\,\cdot\,-s_j)
\bigg\|_{C(U^{\rm c})} 
< \ep. 
\end{align}
\end{lemma}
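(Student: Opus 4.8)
The plan is to prove Lemma \ref{lem.poles} by the classical ``sweeping of poles'' argument, analogous to \cite[Lemma 3.2]{EGFPS2019}. The starting observation is that the claimed estimate \eqref{est.lem.poles} is an assertion about the closed linear span of the single-pole solutions $\{\Gamma(\,\cdot\,-y',\,\cdot\,-s')\,:\,(y',s')\in M\}$ inside the Banach space $C(U^{\rm c})$ (or rather inside the subspace of functions that are bounded and continuous on $U^{\rm c}$; one restricts to a suitable compact exhaustion if needed). So the goal is to show that $\Gamma(\,\cdot\,-y,\,\cdot\,-s)|_{U^{\rm c}}$ lies in the closed span $X$ of this family. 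By the Hahn--Banach theorem, it suffices to show that every continuous linear functional $\Lambda$ on $C(U^{\rm c})$ that annihilates all of $\{\Gamma(\,\cdot\,-y_j,\,\cdot\,-s_j)\}$ for $(y_j,s_j)\in M$ also annihilates $\Gamma(\,\cdot\,-y,\,\cdot\,-s)$.

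The key steps, in order. First, represent a general annihilating functional: by Riesz representation, $\Lambda$ is given by integration against a finite (vector-valued) Borel measure $\nu$ supported in $U^{\rm c}$. Define the ``potential'' $\Phi(y',s') := \langle \Lambda, \Gamma(\,\cdot\,-y',\,\cdot\,-s')\rangle = \int_{U^{\rm c}} \Gamma(x-y',t-s')^\top \dd\nu(x,t)$; this is a function of $(y',s')$ defined (at least) for $(y',s')$ away from the support of $\nu$, hence on a neighborhood of $\overbar{M}$ and in fact on all of $U$. Second, observe that $\Phi$ solves (a component-wise version of) the backward, or adjoint, Stokes system in $(y',s')$ on $U$ wherever $\Gamma$ is smooth in its arguments, because $\Gamma(x-\cdot\,,t-\cdot)$ does. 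More precisely, since each time-section $U(t)$ is connected and $U$ itself is connected, and since $\Gamma$ (minus its singularity) is real-analytic in space away from the diagonal and enjoys a backward-uniqueness / unique-continuation property for the adjoint heat--Stokes operator, the vanishing of $\Phi$ on the open set $M \subset U$ forces $\Phi \equiv 0$ on the connected set $U$. Third, since $(y,s) \in U$, we get $\Phi(y,s) = \langle \Lambda, \Gamma(\,\cdot\,-y,\,\cdot\,-s)\rangle = 0$, which is exactly what Hahn--Banach needs; hence $\Gamma(\,\cdot\,-y,\,\cdot\,-s) \in X$, and approximating it to within $\ep$ by a finite linear combination $\sum_j b_j \Gamma(\,\cdot\,-y_j,\,\cdot\,-s_j)$ with $(y_j,s_j)\in M$ gives \eqref{est.lem.poles}.

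I expect the main obstacle to be the unique continuation / connectedness step, i.e. propagating the zero set of the potential $\Phi$ from the small ball $M$ across all of $U$. For the heat equation in \cite{EGFPS2019} this relies on spatial real-analyticity of caloric functions at fixed time together with the fact that each $U(t)$ is connected, allowing one to sweep poles along spatial paths, and then a separate argument (using the forward-in-time structure and connectedness of $U$ in space-time) to move in time. For the Stokes system the same scheme applies because the pressure-free velocity part of $\Gamma$ is, at each fixed time lag, a smooth divergence-free field that is spatially real-analytic away from its pole, and the adjoint Stokes operator has the backward-uniqueness property; the pressure component is handled by noting it is an instantaneous (elliptic) function of the velocity and carries no independent time dynamics. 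One must also be slightly careful that the off-diagonal evaluation $(y',s')\mapsto \Gamma(x-y',t-s')$ is only nonsingular when $(x,t)$ and $(y',s')$ are separated, which is guaranteed here since $\opsupp\nu \subset U^{\rm c}$ while $(y',s')$ ranges over $U$; and that, to stay within the space $C(U^{\rm c})$ of (bounded) continuous functions rather than merely locally-bounded ones, one restricts attention to a compact exhaustion of $U^{\rm c}$, which does not affect the density argument. Since this is exactly the structure already exploited for the parabolic case and, as the authors note, the ``off-diagonal'' nature of the argument is insensitive to the unboundedness of the Riesz operators, the proof runs in parallel to \cite[Lemma 3.2]{EGFPS2019} with these Stokes-specific adaptations.
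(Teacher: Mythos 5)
Your proposal is correct and follows essentially the same route as the paper: Hahn--Banach duality, Riesz representation of an annihilating functional by a measure supported in the complement, the off-diagonal potential solving the adjoint Stokes system, and unique continuation to conclude the potential vanishes at $(y,s)$. The only (harmless) difference is that the paper propagates the vanishing just along the time-slice $t=s$ — this is precisely where the hypotheses $M(s)\neq\emptyset$ and connectedness of the time-sections enter, and it avoids having to move zeros in time — whereas you assert vanishing on all of $U$, a stronger claim than is needed or easily justified for a backward parabolic system.
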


\begin{proof}
Set $\tilde{\Gamma}=\Gamma(\,\cdot\,-y,\,\cdot\,-s)$. By taking a connected open set $\tilde{U} \subsetneq U$ such that
\begin{equation*}
\left\{
\begin{array}{ll}
(y,s)\in \tilde{U},\\[2pt]
M \subset \tilde{U},\\[2pt]
\text{$\tilde{U}(t)$ is connected for all $t\in\R$,}
\end{array}\right.
\end{equation*}
we consider the vector space
\begin{align*}
\mcV
= \opspan
\big\{
\Gamma(\,\cdot\,-z,\,\cdot\,-\tau)|_{\tilde{U}^{\rm c}}
~\big|~
(z,\tau)\in M
\big\}.
\end{align*}
We may regard $\mcV$ as a subspace of the Banach space $C_{\infty}(\tilde{U}^{\rm c})^{3\times3}$ where $C_{\infty}(\tilde{U}^{\rm c})$ refers to the space of continuous functions on $\tilde{U}^{\rm c}$ vanishing at infinity. Note that $\tilde{\Gamma}\in C_{\infty}(\tilde{U}^{\rm c})^{3\times3}$.

Then, by the Hahn–Banach theorem, it suffices to show that $L\tilde{\Gamma}=0$ holds for any continuous linear functional $L$ on $C_{\infty}(\tilde{U}^{\rm c})^{3\times3}$ which vanishes on $\mcV$. This fact implies that $\tilde{\Gamma}$ can be uniformly approximated by the elements of $\mcV$ in the topology of $C_{\infty}(\tilde{U}^{\rm c})^{3\times3}$.

Take such a functional $L$ on $C_{\infty}(\tilde{U}^{\rm c})^{3\times3}$ arbitrarily. By the Riesz–Markov–Kakutani representation theorem, the topological dual of $C_{\infty}(\tilde{U}^{\rm c})^{3\times3}$ is $\mathcal{M}(\tilde{U}^{\rm c})^{3\times3}$ where $\mathcal{M}(\tilde{U}^{\rm c})$ refers to the space of finite regular signed Borel measures on $\R^{4}$ supported in $\tilde{U}^{\rm c}$. Thus, for the given $L$, there is an element $\mu\in \mathcal{M}(\tilde{U}^{\rm c})^{3\times3}$ such that
\begin{align*}
L\Theta
= \int_{\tilde{U}^{\rm c}}
\Theta(x,t) \dd \mu(x,t),
\quad
\Theta\in C_{\infty}(\tilde{U}^{\rm c})^{3\times3}
\end{align*}
and
\begin{align}\label{onV.prf.lem.poles}
L\Theta
=
\int_{\tilde{U}^{\rm c}}
\Theta(x,t) \dd \mu(x,t) =0,
\quad
\Theta\in \mcV.
\end{align}
Let $\Gamma^{\ast}=\Gamma^{\ast}(x,t)$ be the fundamental solution of the adjoint Stokes system
\begin{equation*}
\left\{
\begin{array}{ll}
\partial_{t} v + \Delta v + \nabla q
= f&\mbox{in}\ \R^{4},\\
\nabla\cdot v
= 0&\mbox{in}\ \R^{4},
\end{array}\right.
\end{equation*}
which is given by $\Gamma^{\ast}(x,t)=\Gamma(-x,-t)$ explicitly. Define the tensor
\begin{align}\label{def.F.prf.lem.poles}
\begin{split}
F(z,\tau)
&=\int_{\tilde{U}^{\rm c}}
\Gamma^{\ast}(z-x,\tau-t)
\dd \mu(x,t)\\
&=\int_{\tilde{U}^{\rm c}}
\Gamma(x-z,t-\tau)
\dd \mu(x,t),
\quad
(z,\tau)\in \tilde{U}.
\end{split}
\end{align}
Notice that $F$ is well-defined and continuous at each $(z,\tau)\in \tilde{U}$ since integration is taken over $\tilde{U}^{\rm c}$. Moreover, we see that each column of the matrix $F$ is a solution of
\begin{equation}\label{ad.stokes.prf.lem.poles}
\left\{
\begin{array}{ll}
\partial_{\tau} v + \Delta_{z} v + \nabla_{z} q
= 0&\mbox{in}\ \tilde{U},\\
\nabla_{z}\cdot v
= 0&\mbox{in}\ \tilde{U}.
\end{array}\right.
\end{equation}
Then, by the definition of $F$ in \eqref{def.F.prf.lem.poles} and the condition \eqref{onV.prf.lem.poles}, we see that $F(z,\tau)=0$ for any $(z,\tau)\in M$. Since $M(s)\subset \tilde{U}(s)$ and $\tilde{U}(s)$ is connected, the unique continuation for \eqref{ad.stokes.prf.lem.poles} in space, or the unique continuation in \cite{FabLeb2002} for \eqref{ad.stokes.prf.lem.poles} in space-time, implies that $F$ vanishes on $\tilde{U}(s)$. In particular, at $(y,s)\in \tilde{U}(s)$,
\begin{align*}
0
=F(y,s)
=
\int_{\tilde{U}^{\rm c}}
\Gamma(x-y,t-s)
\dd \mu(x,t)
=
L\tilde{\Gamma}.
\end{align*}
Thus the assertion follows by the Hahn–Banach theorem. This completes the proof.
\end{proof}

Recall \eqref{inqe1.App-disc} and let $M\subset\R^{4}$ be an open set such that
\begin{equation*}
\left\{
\begin{array}{ll}
\text{$M$ is bounded and connected},\\[2pt]
\text{$M\subset \overbar{B_{R}\times(0,T)}^{{\rm c}}$},\\[2pt]
\text{$M(r_{i})\neq\emptyset$ for all $i\in I$.}
\end{array}\right.
\end{equation*}
Then we have the main result of this second step as follows.

\begin{lemma}\label{lem.App-disc}
There are 
$J\in\N$,
$\{(y_{j},s_{j})\}_{j=1}^{J}\subset M$,
and
$\{c_{j}\}_{j=1}^{J}\subset\R^{3}$
such that
\begin{align}\label{def.v1}
v_1(x,t)
&:= \sum_{j=1}^{J} \Gamma(x-y_j,t-s_j) c_{j}
\end{align}
approximates $\tilde{v}$ in Lemma \ref{lem.Ext} as 
\begin{equation}\label{capp}
\|\tilde{v} - v_1\|_{C(\overbar{L_{2}})}
< \ep.
\end{equation}
Moreover, $v_1$ is smooth on $B_R\times\R$ and satisfies 
\begin{equation}\label{est.v1}
\sup_{x\in B_R}
|\partial_{t}^k \partial_x^\alpha v_1(x,t)|
\lesssim_{\,\alpha,k}
(1+|t|)^{-\frac32-\frac{|\alpha|}{2}-k}
\end{equation}
\end{lemma}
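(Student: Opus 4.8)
The plan is to combine the previous two lemmas of this section. First I would invoke Lemma \ref{lem.poles} with $U = \overbar{L_2}^{\rm c}$, which is by construction a connected open set whose time-sections $U(t)$ are connected; this lemma lets us replace each single fundamental-solution term $\Gamma(\,\cdot\,-z_i,\,\cdot\,-r_i)d_i$ appearing in \eqref{inqe1.App-disc} by a finite linear combination $\sum_j b_j^{(i)}\Gamma(\,\cdot\,-y_j^{(i)},\,\cdot\,-s_j^{(i)})$ with poles $(y_j^{(i)},s_j^{(i)})$ lying in the small set $M$, up to error $\ep/I$ in $C(U^{\rm c}) = C(\overbar{L_2})$. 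Here one uses the hypothesis $M(r_i)\neq\emptyset$ to guarantee the applicability of Lemma \ref{lem.poles} for each $i$ (one may have to shrink $M$ slightly or take a connected $M'\subset M$ with $M'(r_i)\neq\emptyset$; since there are finitely many $i$ this is harmless). Relabel the union of all these poles as $\{(y_j,s_j)\}_{j=1}^{J}\subset M$ with scalar weights $\tilde b_j$, absorb the vectors $d_i$ to set $c_j = \tilde b_j\, d_{i(j)} \in\R^3$, and define $v_1$ as in \eqref{def.v1}. Then the triangle inequality against \eqref{inqe1.App-disc} (noting $\overbar{L_2}\subset L_1^{\rm c}$ by construction) gives \eqref{capp}, after having chosen the errors in Lemmas \ref{lem.conv} and \ref{lem.poles} to be, say, $\ep/2$ and $\ep/(2I)$ respectively.

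For the smoothness and decay claim \eqref{est.v1}, the key point is that all poles $(y_j,s_j)$ lie in $M\subset \overbar{B_R\times(0,T)}^{\rm c}$, so for $x\in B_R$ the arguments $x - y_j$ stay at a positive distance from the origin, or $t - s_j$ stays away from $(0,T)$. Concretely, $v_1$ is a finite sum, so it suffices to bound a single term $\Gamma(x-y_j,t-s_j)c_j$ and its space-time derivatives uniformly for $x\in B_R$. From the explicit representation \eqref{def.fund.sol.1}--\eqref{def.fund.sol.4} of $\Gamma$ via the Gaussian-type kernel $\gamma^k$ in \eqref{def.fund.sol.3}, one reads off that $\Gamma$ is smooth away from $(0,0)$ and that for $|t - s_j|$ large one has the self-similar decay $|\partial_t^k\partial_x^\alpha \Gamma(x-y_j,t-s_j)|\lesssim_{\alpha,k}|t-s_j|^{-3/2-|\alpha|/2-k}$, uniformly in $x$ over the bounded set $B_R$ (here the $-3/2$ is the parabolic scaling exponent of $\Gamma$ in $3$D: $\Gamma$ scales like $|x|^{-3}$, i.e.\ $|t|^{-3/2}$). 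For $|t-s_j|$ bounded, say $|t-s_j|\le 1$, one uses instead that $M$ is separated from $B_R\times(0,T)$ in space-time, hence $|x-y_j|$ is bounded below, so $\Gamma(x-y_j,t-s_j)$ and all its derivatives are bounded on that range — and $(1+|t|)^{-3/2-|\alpha|/2-k}$ is itself bounded below on any bounded $t$-interval, so the stated inequality holds trivially there after adjusting the implicit constant. Patching the two regimes gives \eqref{est.v1}.

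The main obstacle is the second, "moreover" part — one must be a little careful about how $t$ ranges over all of $\R$ while $s_j$ is confined to the $t$-projection of $M$, which is a bounded subset of $\R\setminus(0,T)$ (actually of $\R\setminus[0,T]$ up to the closure). For $|t|\to\infty$ the difference $|t-s_j|\approx|t|$, so the decay in $t$ of $\Gamma$ transfers directly to decay in $(1+|t|)$; this is the case where the precise far-field asymptotics of the Stokes fundamental solution, extracted from \eqref{def.fund.sol.3} (differentiation under the integral sign and a change of variables rescaling $z\mapsto \sqrt{t}\,z$), are genuinely used. One should also note that, although each $\gamma^k$ in \eqref{def.fund.sol.3} is defined only for $t>0$ and vanishes for $t<0$, the tensor $\Gamma$ built from it is in fact smooth across $t=0$ away from $x=0$ (this is classical for the Stokes/Oseen tensor), which is what makes $v_1$ smooth on all of $B_R\times\R$ and not merely on $B_R\times(0,\infty)$ — we appeal to this known regularity. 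With these points addressed, \eqref{est.v1} follows from elementary estimates on the Gaussian kernel, and no new idea beyond the two cited lemmas and the explicit form of $\Gamma$ is needed.
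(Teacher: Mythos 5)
Your treatment of the approximation \eqref{capp} is the same as the paper's: apply Lemma \ref{lem.poles} with $U=\overbar{L_{2}}^{\rm c}$ to each pole $(z_{i},r_{i})$ appearing in \eqref{inqe1.App-disc}, collect the resulting poles in $M$ into a single sum, and conclude by the triangle inequality. The only cosmetic point there is that Lemma \ref{lem.poles} approximates the tensor $\Gamma(\,\cdot\,-z_{i},\,\cdot\,-r_{i})$ itself, so the tolerance for the $i$-th term should be $\ep/(I\max\{1,|d_{i}|\})$ rather than $\ep/(2I)$ in order to absorb the vector $d_{i}$; this is exactly what the paper does in \eqref{inqe2.App-disc}.

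The gap is in your justification of the ``moreover'' part, and it stems from a misreading of where $M$ sits. You assert that the $t$-projection of $M$ is contained in $\R\setminus(0,T)$; this is incompatible with the standing requirement $M(r_{i})\neq\emptyset$ for the times $r_{i}\in(0,T)$ of the poles coming from $\opsupp\varphi\subset B_{R}\times(0,T)$. The separation of $M$ from the cylinder is therefore \emph{spatial} where it matters, and the inference ``$M$ is separated from $B_{R}\times(0,T)$ in space-time, hence $|x-y_{j}|$ is bounded below for $x\in B_{R}$'' is a non sequitur: nothing in the listed conditions on $M$ forbids a pole with $y_{j}\in B_{R}$ and $s_{j}<0$, and for such a pole $v_{1}$ would be singular at $(y_{j},s_{j})\in B_{R}\times\R$, destroying both the smoothness claim and \eqref{est.v1} near $t=s_{j}$. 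What is actually needed --- and what the paper's proof invokes when it ``observes that each pole $(y_{j},s_{j})$ is located outside $B_{R}\times\R$'' --- is that $M$ be chosen with spatial projection disjoint from $\overbar{B_{R}}$ (e.g.\ $M=(B_{R+2}\setminus\overbar{B_{R+1}})\times(a,b)$ with $(a,b)\supset(0,T)$), which is consistent with all the stated conditions; you should make that choice explicit. Separately, your appeal to the ``classical'' smoothness of $\Gamma$ across $t=0$ away from $x=0$ is not correct as stated: unlike the heat kernel, the Oseen tensor does not vanish as $t\to0^{+}$ for fixed $x\neq0$, since the term $\nabla(\nabla\cdot\gamma^{k})$ in \eqref{def.fund.sol.1} tends to $\nabla\partial_{k}(4\pi|x|)^{-1}\neq0$, so $\Gamma(x,\cdot)$ jumps across $t=0$. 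The paper's one-line argument is equally silent on this point, so it is a shared difficulty rather than a divergence from the paper, but it means the $t$-regularity of $v_{1}$ at $t=s_{j}$ cannot simply be quoted as classical.
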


\begin{proof}
Fix $1\le i\le I$ in \eqref{inqe1.App-disc}. Lemma \ref{lem.poles} with $U=\overbar{L_{2}}^{{\rm c}}$ and $(y,s)=(z_{i},s_{i})$ implies that
\begin{align}\label{inqe2.App-disc}
\bigg\|
\Gamma(\,\cdot\,-z_{i},\,\cdot\,-r_{i})
- \sum_{i=1}^{J_{i}}
b^{(i)}_{j}
\Gamma(\,\cdot\,-y^{(i)}_{j},\,\cdot\,-s^{(i)}_{j})
\bigg\|_{C(\overbar{L_{2}})}
< \frac{\ep}{I \max\{1,|d_{i}|\}}
\end{align}
for some 
$J_{i}\in\N$,
$\{(y^{(i)}_{j},s^{(i)}_{j})\}_{i=1}^{J_{i}}\subset M$,
and 
$\{b^{(i)}_{j}\}_{j=1}^{J_{i}}\subset\R$.

Gathering the two sums in \eqref{inqe1.App-disc} and \eqref{inqe2.App-disc} and rearranging them into a single one, we have the right of \eqref{def.v1} and \eqref{capp}. Finally, the estimate \eqref{est.v1} is obtained by the definition of $\Gamma$ in \eqref{def.fund.sol.1}--\eqref{def.fund.sol.3} and observing that each pole $(y_{j},s_{j})$ is located outside $B_R\times\R$.
\end{proof}

    \subsection{Fourier transform in time and series expansion}
    \label{Fourier-exp}

Our objective in the subsequent steps is to approximate $v_{1}$ in Lemma \ref{lem.App-disc} by the vector fields defined on the whole space-time $\R^{4}$, thereby completing the proof of Theorem \ref{thma}. These global approximations must be built to solve the Stokes system in $\R^{4}$ without forcing. To accomplish this, we will utilize the Fourier transformation of $v_{1}$ in time and the series expansion in vector harmonics. Here we may take a truncation both in time frequencies $\tau$ and the modes of series $l$; see Lemma \ref{lem.Trunc} in the next step. This lemma allows us to avoid complicated analysis when $|\tau|$ is near zero or infinity and $l$ is large in Section \ref{Global}.

Fix $0<\rho<R$. Let us introduce the Fourier transform in time of $v_{1}$
\begin{align}\label{def.fourier.v1}
\begin{split}
\hat{v}_{1}(x,\tau)
=
\int_{-\infty}^{\infty}
v_{1}(x,t) e^{-\ii t\tau} \dd t,
\quad (x,\tau)\in B_{\rho}\times\R.
\end{split}
\end{align}
Integration by parts and the estimate \eqref{est.v1} implies that, for $k\in\Z_{\ge0}$,
\begin{align}\label{est.fourier.v1}
\sup_{x\in B_R}
|\partial_x^\alpha \hat{v}_{1}(x,\tau)|
\lesssim_{\,k,\alpha,v_{1}}
(1+|\tau|)^{-k},
\quad (x,\tau)\in B_{\rho}\times\R.
\end{align}
Then it is easy to check the well-definedness of the inverse Fourier transformation
\begin{align}\label{def.inv.fourier.v1}
\begin{split}
v_{1}(x,t)
&=
\frac{1}{2\pi}
\int_{-\infty}^{\infty}
\hat{v}_{1}(x,\tau) e^{\ii \tau t} \dd \tau,
\quad (x,t)\in B_{\rho}\times\R.
\end{split}
\end{align}
The transform $\hat{v}_{1}$ is expended into the vector spherical harmonics in Section \ref{sec.SH} as
\begin{align}\label{exp.v1}
\begin{split}
\hat{v}_{1}(x,\tau)
&=
\hat{v}_{1}(r,\theta,\phi,\tau)\\
&=
\sum_{l=1}^{\infty} \sum_{m=-l}^{l}
\big(
c^{r}_{lm}(r,\tau) \bm{Y}_{lm}
+ c^{(1)}_{lm}(r,\tau) \bm{\Psi}_{lm}
+ c^{(2)}_{lm}(r,\tau) \bm{\Phi}_{lm}
\big).
\end{split}
\end{align}
These coefficients $c^{r}_{lm},c^{(1)}_{lm},c^{(2)}_{lm}$ are defined by \eqref{coef.exp.S.VSH} with $w^{r}_{lm},w^{(1)}_{lm},w^{(2)}_{lm}$ and $w$ replaced by $c^{r}_{lm},c^{(1)}_{lm},c^{(2)}_{lm}$ and $\hat{v}_{1}$, respectively. The series converges absolutely and uniformly on $B_{\rho}\times\R$ thanks to Lemma \ref{lem.VSH.exp} (\ref{item2.lem.VSH.exp}). Observe that the terms with $l=0$ are absent in \eqref{exp.v1}. This is because of \eqref{Psi.Phi.00} and the condition $c^{r}_{00}=0$. The latter follows from
\[
\bm{Y}_{00}
= \bm{\hat r} Y_{00}
= \bm{\hat r} \sqrt{\frac{1}{4\pi}}
\]
and the Gauss divergence theorem
\[
c^{r}_{00}
=
\langle
\hat{v}_{1}, \bm{Y}_{00}
\rangle_{S}
=
\sqrt{\frac{1}{4\pi}}
\frac{1}{r^{2}}
\int_{S_{r}}
\hat{v}_{1}\cdot \bm{\hat r}
=
\sqrt{\frac{1}{4\pi}}
\frac{1}{r^{2}}
\int_{B_{r}}
\nabla\cdot \hat{v}_{1}
=
0,
\quad 0<r<\rho.
\]
Consequently, from \eqref{def.inv.fourier.v1}--\eqref{exp.v1}, we obtain the following representation of $v_{1}$: 
\begin{align}\label{exp2.v1}
\begin{split}
v_{1}(x,t)
&=v_{1}(r,\theta,\phi,t)\\
&=
\frac{1}{2\pi}
\sum_{l=1}^{\infty} \sum_{m=-l}^{l}\\
&\qquad\quad
\int_{-\infty}^{\infty}
\big(
c^{r}_{lm}(r,\tau) \bm{Y}_{lm}
+ c^{(1)}_{lm}(r,\tau) \bm{\Psi}_{lm}
+ c^{(2)}_{lm}(r,\tau) \bm{\Phi}_{lm}
\big)
e^{\ii \tau t}
\dd \tau.
\end{split}
\end{align}

    \subsection{Truncation}
    \label{Trunc}

The following is a useful lemma for constructing global approximations.

\begin{lemma}\label{lem.Trunc}
For any $\ep>0$, there are $L\in\Z_{\ge0}$ and $0<\tau_{1}<\tau_{2}<\infty$ such that the truncation of \eqref{exp2.v1}
\begin{align}\label{def.v2}
\begin{split}
v_{2}(x,t)
&=v_{2}(r,\theta,\phi,t)\\
&:=
\frac{1}{2\pi}
\sum_{l=1}^{L} \sum_{m=-l}^{l}\\
&\qquad\quad
\int_{\tau_{1} < |\tau| < \tau_{2}}
\big(
c^{r}_{lm}(r,\tau) \bm{Y}_{lm}
+ c^{(1)}_{lm}(r,\tau) \bm{\Psi}_{lm}
+ c^{(2)}_{lm}(r,\tau) \bm{\Phi}_{lm}
\big)
e^{\ii \tau t}
\dd \tau
\end{split}
\end{align}
satisfies
\begin{align}\label{ineq.lem.Trunc}
\|v_{1} - v_{2}\|_{C(B_{\rho}\times\R)}
< \ep. 
\end{align}
\end{lemma}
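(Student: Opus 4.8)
The plan is to prove Lemma \ref{lem.Trunc} by splitting the error $v_1 - v_2$ into two pieces: the tail of the series in $l$ (the modes $l > L$), and the tail of the frequency integral (the region $|\tau| \le \tau_1$ or $|\tau| \ge \tau_2$), each of which we make small by choosing $L$ large and $\tau_1$ small, $\tau_2$ large. More precisely, write
\[
v_1 - v_2 = \big(v_1 - v_1^{L}\big) + \big(v_1^{L} - v_2\big),
\]
where $v_1^{L}$ is the partial sum of \eqref{exp2.v1} keeping only the modes $1 \le l \le L$ but still integrating over all $\tau \in \R$. First I would bound $\|v_1 - v_1^{L}\|_{C(B_\rho \times \R)}$; then, with $L$ fixed, I would bound $\|v_1^{L} - v_2\|_{C(B_\rho \times \R)}$, which is a finite sum (over $1 \le l \le L$, $|m| \le l$) of frequency-tail integrals.

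For the first piece, the key input is the decay of the spherical-harmonic coefficients. Since $v_1$ is smooth on $B_R \times \R$, for each fixed $\tau$ the function $\hat v_1(\cdot,\tau)$ is smooth on $\overline{B_\rho}$, so by the reasoning behind Lemma \ref{lem.SH.exp}(\ref{item3.lem.SH.exp}) and Lemma \ref{lem.VSH.exp}, integrating $\Delta_S$ by parts $k$ times against the vector spherical harmonics gives, uniformly in $r \le \rho$,
\[
|c^{r}_{lm}(r,\tau)| + l(l+1)\big(|c^{(1)}_{lm}(r,\tau)| + |c^{(2)}_{lm}(r,\tau)|\big)
\lesssim_{k} (1+l)^{-2k}\,\|\hat v_1(\cdot,\tau)\|_{C^{2k}(\overline{B_\rho})},
\]
while combining this with the pointwise bound $\|\bm Y_{lm}\|_{L^\infty(S)} + l(l+1)^{-1/2}(\|\bm\Psi_{lm}\|_{L^\infty(S)} + \|\bm\Phi_{lm}\|_{L^\infty(S)}) \lesssim l(l+1)$ (as in the proof of Lemma \ref{lem.SH.exp}(\ref{item4.lem.SH.exp})) and with the frequency decay \eqref{est.fourier.v1} — which in turn controls $\|\hat v_1(\cdot,\tau)\|_{C^{2k}(\overline{B_\rho})} \lesssim_{k} (1+|\tau|)^{-2}$ after also applying interior elliptic estimates in $x$ — yields a summable-and-integrable majorant $\lesssim (1+l)^{-2k+2}(1+|\tau|)^{-2}$ for the $(l,m,\tau)$ term. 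Choosing $k$ large enough that $-2k+2 \le -2$ makes the double series/integral over $l > L$ converge and tend to $0$ as $L \to \infty$; pick $L$ so that this tail is $< \ep/2$.

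For the second piece, fix that $L$. For each of the finitely many $(l,m)$ with $1 \le l \le L$, the contribution to $v_1^L - v_2$ is
\[
\frac{1}{2\pi}\int_{\{|\tau| \le \tau_1\} \cup \{|\tau| \ge \tau_2\}}
\big(c^{r}_{lm}(r,\tau)\bm Y_{lm} + c^{(1)}_{lm}(r,\tau)\bm\Psi_{lm} + c^{(2)}_{lm}(r,\tau)\bm\Phi_{lm}\big) e^{\ii\tau t}\dd\tau,
\]
whose $C(B_\rho\times\R)$-norm is at most $\frac{1}{2\pi}\int_{\{|\tau|\le\tau_1\}\cup\{|\tau|\ge\tau_2\}} (\text{majorant in }\tau)\dd\tau$ with the same $(1+|\tau|)^{-2}$-type majorant used above (for fixed $l$). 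The integral over $|\tau| \ge \tau_2$ tends to $0$ as $\tau_2 \to \infty$ by integrability, and the integral over $|\tau| \le \tau_1$ tends to $0$ as $\tau_1 \to 0$ since the integrand is bounded near $\tau = 0$ — here one uses that $\hat v_1$, hence each $c_{lm}^{\bullet}$, is continuous (indeed smooth) in $\tau$, so no singularity arises at $\tau = 0$. Summing over the finitely many $(l,m)$ and choosing $\tau_1$ small, $\tau_2$ large, makes this piece $< \ep/2$; adding the two estimates gives \eqref{ineq.lem.Trunc}.

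The main obstacle is bookkeeping the uniformity: one must verify that all the constants above are independent of $r \in (0,\rho)$ and of $\tau$ in the relevant ranges, which forces one to combine the spherical decay (Lemma \ref{lem.SH.exp}(\ref{item3.lem.SH.exp}), applied to $\hat v_1$ rather than a fixed function) with the time-frequency decay \eqref{est.fourier.v1} and with interior elliptic estimates converting $x$-derivatives of $\hat v_1$ into control by $\|\hat v_1\|_{L^\infty}$ on a slightly larger ball. The potential singularity at $\tau = 0$ is only apparent — smoothness of $v_1$ in $t$ together with \eqref{est.v1} gives $\hat v_1 \in C^\infty_x$ uniformly and $\hat v_1(\cdot,\tau)$ continuous in $\tau$ down to $0$ — so no care beyond dominated convergence is needed there. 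Everything else is a routine application of the convergence lemmas already established in Section \ref{sec.prelim}.
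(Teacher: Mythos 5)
Your proof is correct and is essentially the paper's argument: a two-step truncation making the spherical-harmonics tail and the frequency tail each smaller than $\ep/2$. You merely reverse the order (series first over all $\tau\in\R$, then frequency for the finitely many remaining modes), which obliges you to produce a $\tau$-integrable, $l$-summable majorant for the series tail via quantitative versions of Lemma \ref{lem.SH.exp}(\ref{item3.lem.SH.exp}) combined with \eqref{est.fourier.v1}; the paper truncates in $\tau$ first, so its series truncation only needs uniform convergence over the compact window $\tau_{1}\le|\tau|\le\tau_{2}$, which is slightly lighter but not materially different.
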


\begin{proof}
Let $\ep>0$. Thanks to \eqref{est.fourier.v1} with $\alpha=0$, there are $0<\tau_{1}<\tau_{2}<\infty$ such that
\begin{align*}
\begin{split}
\bigg|
v_{1}(x,t)
- \frac{1}{2\pi}
\int_{\tau_{1} < |\tau| < \tau_{2}}
\hat{v}_{1}(x,\tau) e^{\ii \tau t} \dd \tau
\bigg|
<\frac{\ep}2.
\end{split}
\end{align*}
Then, by the convergence of the series in \eqref{exp.v1}, there is $L\in\Z_{\ge0}$ such that
\begin{align*}
\begin{split}
\bigg|
\hat{v}_{1}(x,\tau)
- \sum_{l=1}^{L} \sum_{m=-l}^{l}
\big(
c^{r}_{lm} \bm{Y}_{lm}
+ c^{(1)}_{lm} \bm{\Psi}_{lm}
+ c^{(2)}_{lm} \bm{\Phi}_{lm}
\big)
\bigg|
<
\frac{\pi}{2(\tau_{2}-\tau_{1})} \ep.
\end{split}
\end{align*}
The assertion \eqref{ineq.lem.Trunc} is a consequence of these two estimates.
\end{proof}

   \subsection{Global approximation and end of the proof}
   \label{Global}

Regarding the coefficients in \eqref{exp.v1}
\[
c^{r}_{lm}(r,\tau),
\quad 
c^{(1)}_{lm}(r,\tau),
\quad
c^{(2)}_{lm}(r,\tau)
\]
as functions of $r\in[0,\rho)$ after fixing $\tau\in\R$, we will see in the proof of Lemma \ref{lem.coeff.v1} below that they satisfy a certain system of ODEs having singularities only at $r=0$. The solutions of such ODEs can be extended outside the finite interval $[0,\rho)$ and to $[0,\infty)$ by general theory. This enables us to extend the approximation $v_{2}$ in Lemma \ref{lem.Trunc} to the whole space-time $\R^{4}$ and hence to obtain a global approximation of $v_{1}$ thanks to the truncation.

Therefore, our main task is to estimate the coefficients $c^{r}_{lm}, c^{(1)}_{lm}, c^{(2)}_{lm}$ in \eqref{exp.v1} which are considered as functions of $r\in [0,\infty)$. Here we can take advantage of the fact that, according to Lemma \ref{lem.Trunc}, the time frequency $\tau$ can be assumed to satisfy $0<\tau_{1}\le|\tau|\le\tau_{2}<\infty$. Once $c^{r}_{lm}, c^{(1)}_{lm}, c^{(2)}_{lm}$ are estimated, the proof of Theorem \ref{thma} will be completed.

\textit{In the rest of this section, we regard a function $f(r,\tau)$ depending on both $r$ and $\tau$ as a function of $r$ if $\tau$ is fixed. An ``ODE" satisfied by $f(r,\tau)$ is understood in this convention.}

Observe that $v_1$ satisfies, with $0<\rho<R$ arbitrarily fixed, 
\begin{equation}\label{eq.v1q1.prf.thma}
\left\{
\begin{array}{ll}
\partial_{t} v_1 - \Delta v_1 + \nabla q_{1}
= 0&\mbox{in}\ B_{\rho}\times\R,\\
\nabla\cdot v_1
= 0&\mbox{in}\ B_{\rho}\times\R
\end{array}\right.
\end{equation}
for some smooth pressure $q_1$. Then, by the Fourier transform \eqref{def.fourier.v1} and 
\begin{align}\label{def.lap.q1}
\begin{split}
\hat{q}_{1}(x,\tau)
&=
\int_{-\infty}^{\infty}
q_{1}(x,t) e^{-\ii t\tau} \dd t,
\quad (x,\tau)\in B_{\rho}\times\R,
\end{split}
\end{align}
we see that the pair $(\hat{v}_{1},\hat{q}_{1})$ satisfies the vector Helmholtz equation with forcing
\begin{equation}\label{eq.helm}
\left\{
\begin{array}{ll}
\ii\tau \hat{v}_{1} - \Delta \hat{v}_{1} 
= -\nabla \hat{q}_{1}&\mbox{in}\ B_{\rho}\times\R,\\
\nabla\cdot \hat{v}_{1}
= 0&\mbox{in}\ B_{\rho}\times\R.
\end{array}\right.
\end{equation}

Before considering the solution $\hat{v}_{1}$ of \eqref{eq.helm}, we give estimates of the forcing $-\nabla \hat{q}_{1}$. Let us expand $\hat{q}_{1}$ into the (scalar) spherical harmonics in Section \ref{sec.SH} as
\begin{align}\label{exp.q1}
\hat{q}_{1}(x,\tau)
=
\hat{q}_{1}(r,\theta,\phi,\tau)
=
\sum_{l=1}^{\infty} \sum_{m=-l}^{l}
b_{lm}(r,\tau) Y_{lm}.
\end{align}
Notice that we have taken $b_{00}=0$ in \eqref{exp.q1} without loss of generality.

\begin{lemma}\label{lem.coeff.q1}
The coefficient in \eqref{exp.q1} can be defined for $r\in(0,\infty)$ and satisfies
\begin{align}\label{est.coeff.q1}
|b_{lm}(r,\tau)|
\lesssim_{\,\tau_{2},l,\rho}
(1+r)^{l},
\quad r\in(0,\infty)
\end{align}
uniformly in $\tau\in\R$ with $0<\tau_{1}\le|\tau|\le\tau_{2}<\infty$.
\end{lemma}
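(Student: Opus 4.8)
The plan is to derive an explicit ODE for each $b_{lm}(r,\tau)$, recognize that the equation forces $b_{lm}$ to be harmonic-like in $r$, and then read off the growth bound. First I would take the divergence of the first equation in \eqref{eq.helm}; since $\nabla\cdot\hat v_1=0$, this gives $\Delta\hat q_1=0$ in $B_\rho$, i.e.\ $\hat q_1(\cdot,\tau)$ is harmonic in $B_\rho$ for each fixed $\tau$. Substituting the expansion \eqref{exp.q1} into $\Delta\hat q_1=0$ and using the spherical-coordinate Laplacian \eqref{DpasS} together with $\Delta_S Y_{lm}=-\mu_l Y_{lm}$ from \eqref{def.mu}, each coefficient satisfies the Euler-type ODE
\begin{align}\label{plan.ode.b}
\frac{1}{r^2}\frac{\dd}{\dd r}\Big(r^2\frac{\dd b_{lm}}{\dd r}\Big)-\frac{\mu_l}{r^2}\,b_{lm}=0,
\quad r\in(0,\rho),
\end{align}
whose two independent solutions are $r^l$ and $r^{-l-1}$. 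Since $\hat q_1$ is smooth in $B_\rho$ (being harmonic there) and the coefficient is recovered by $b_{lm}(r,\tau)=\langle\hat q_1(r,\cdot,\cdot,\tau),Y_{lm}\rangle_S$, the singular solution is excluded near $r=0$, so $b_{lm}(r,\tau)=\beta_{lm}(\tau)\,r^l$ on $(0,\rho)$ for some coefficient $\beta_{lm}(\tau)\in\C$.

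Next I would extend $b_{lm}$ to $(0,\infty)$: the ODE \eqref{plan.ode.b} is defined (with a regular singular point only at $r=0$) on all of $(0,\infty)$, so we simply keep the representation $b_{lm}(r,\tau)=\beta_{lm}(\tau)\,r^l$ for all $r>0$. To bound $\beta_{lm}(\tau)$, evaluate at $r=\rho/2$ (say): $|\beta_{lm}(\tau)|=(\rho/2)^{-l}\,|b_{lm}(\rho/2,\tau)|=(\rho/2)^{-l}\,|\langle\hat v_1\text{-pressure}\dots\rangle|$, and then use the uniform-in-$\tau$ control on $\hat q_1$ that comes from \eqref{est.fourier.v1} (via the Helmholtz equation \eqref{eq.helm}, which expresses $\nabla\hat q_1=\Delta\hat v_1-\ii\tau\hat v_1$ in terms of $\hat v_1$ and hence controls $\hat q_1$ up to an additive constant, fixed here by $b_{00}=0$) together with the orthonormality $\|Y_{lm}\|_{L^2(S)}=1$. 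This yields $|\beta_{lm}(\tau)|\lesssim_{\tau_2,l,\rho}1$ uniformly for $\tau_1\le|\tau|\le\tau_2$, whence
\begin{align*}
|b_{lm}(r,\tau)|=|\beta_{lm}(\tau)|\,r^l\lesssim_{\tau_2,l,\rho}r^l\le(1+r)^l,\quad r\in(0,\infty),
\end{align*}
which is \eqref{est.coeff.q1}.

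The main obstacle I anticipate is the bookkeeping needed to make the constant $\beta_{lm}(\tau)$ genuinely uniform in $\tau$ on the annulus $\tau_1\le|\tau|\le\tau_2$: one must control $\hat q_1$ in $B_\rho$ from $\hat v_1$ through \eqref{eq.helm}, pin down the additive constant in the pressure (here handled by the normalization $b_{00}=0$), and then invoke interior estimates for the harmonic function $\hat q_1$ — this is where the dependence on $\tau_2$ and $\rho$ enters. Everything else (the Euler ODE, the exclusion of $r^{-l-1}$, the trivial extension to $r>0$) is routine, since the radial equation has its only singularity at the origin and no coupling to $\tau$ beyond through the initial data $\hat v_1$.
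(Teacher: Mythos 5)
Your proposal is correct and follows essentially the same route as the paper: divergence of \eqref{eq.helm} gives $\Delta\hat q_1=0$, the expansion yields the Euler ODE with regular solution $r^l$, which extends to $(0,\infty)$, and the constant is controlled uniformly in $\tau$ through $\nabla\hat q_1=-\ii\tau\hat v_1+\Delta\hat v_1$ and \eqref{est.fourier.v1}. The only cosmetic difference is that the paper pins down the constant by pairing $\partial_r\hat q_1$ with $Y_{lm}$ and integrating over $(0,\rho)$ to get an explicit formula, whereas you evaluate $b_{lm}$ at a single radius after bounding $\hat q_1$ pointwise; both rest on the same inputs.
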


\begin{proof}
Fix $\tau\in\R$. Operating $\nabla\cdot$ to the first line of \eqref{eq.helm}, we deduce that $\hat{q}_{1}=\hat{q}_{1}(x,\tau)$ solves $-\Delta \hat{q}_{1}=0$ in $B_{\rho}\times\R$. Then, substituting the series expansion \eqref{exp.q1} to it and using the relations \eqref{DS}--\eqref{DpasS} as well as Lemma \ref{lem.SH.exp}, we find that the coefficient $b_{lm}(r)$ satisfies
\begin{align}\label{eq2.q1.prf.thma}
\sum_{l=1}^{\infty} \sum_{m=-l}^{l}
\bigg\{
\Big(-\frac{\dd^2}{\dd r^2} 
- \frac{2}{r} \frac{\dd}{\dd r}
+ \frac{l(l+1)}{r^2}
\Big)
b_{lm}(r)
\bigg\}
Y_{lm}
=0,
\quad
0<r<\infty.
\end{align}
Since linearly independent solutions of
\[
-\frac{\dd^2 y}{\dd r^2} 
- \frac{2}{r} \frac{\dd y}{\dd r}
+ \frac{l(l+1)}{r^2} y
=0, 
\quad
0<r<\infty
\]
are
$r^{l}$ and
$r^{-l-1}$, 
we see that $b_{lm}(r)$ bounded near $r=0$ is given by
\begin{align}\label{rep.b}
b_{lm}(r) = B_{lm} r^{l}.
\end{align}
To determine $B_{lm}$, we substitute \eqref{rep.b} to \eqref{exp.q1} and then have
\[
(\nabla q)\cdot \bm{\hat r}
= \frac{\partial \hat{q}_{1}}{\partial r}
= \sum_{l=1}^{\infty} \sum_{m=-l}^{l}
lB_{lm} r^{l-1} Y_{lm}.
\]
Using the orthogonality \eqref{sortho} and the first line of \eqref{eq.helm}, we observe that
\begin{align*}
lB_{lm} r^{l-1}
=
\langle (\nabla q)\cdot \bm{\hat r}, Y_{lm} \rangle_{S}(r)
=
\langle (-\ii\tau \hat{v}_{1}+\Delta \hat{v}_{1})\cdot \bm{\hat r}, Y_{lm} \rangle_{S}(r).
\end{align*}
Integrating the both sides over $(0,\rho)$, we find that
\begin{align}\label{def.B}
B_{lm}
=
\rho^{-l}
\int_{0}^{\rho}
\langle (-\ii\tau \hat{v}_{1}+\Delta \hat{v}_{1})\cdot \bm{\hat r}, Y_{lm} \rangle_{S}(r)
\dd r.
\end{align}
Then the estimate \eqref{est.fourier.v1} gives, uniformly in $\tau\in\R$ with $0<\tau_{1}\le|\tau|\le\tau_{2}<\infty$,
\begin{align}\label{est.B}
|B_{lm}|
\lesssim_{\,l,\rho}
|\tau|
\lesssim_{\,\tau_{2},l,\rho}
1.
\end{align}
The assertion \eqref{est.coeff.q1} is a consequence of \eqref{rep.b} and \eqref{est.B}.
\end{proof}

We then consider the solution $\hat{v}_{1}$ of \eqref{eq.helm} expanded as in \eqref{exp.v1}.

\begin{lemma}\label{lem.coeff.v1}
The coefficients in \eqref{exp.v1} can be defined for $r\in(0,\infty)$ and satisfies
\begin{align}\label{est.coeff.v1}
|c^{r}_{lm}(r,\tau)|
+ |c^{(1)}_{lm}(r,\tau)|
+ |c^{(2)}_{lm}(r,\tau)|
\lesssim_{\,v_{1},\tau_{1},\tau_{2},l,\rho}
e^{\sqrt{\tau_{2}}r},
\quad r\in(0,\infty)
\end{align}
uniformly in $\tau\in\R$ with $0<\tau_{1}\le|\tau|\le\tau_{2}<\infty$.
\end{lemma}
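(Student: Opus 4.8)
The plan is to show that, for each fixed $\tau$ with $0<\tau_{1}\le|\tau|\le\tau_{2}$, the triple $(c^{r}_{lm},c^{(1)}_{lm},c^{(2)}_{lm})$ solves a linear ODE system in $r$ whose coefficients are rational in $r$ and smooth on $(0,\infty)$, so that these functions extend uniquely from $(0,\rho)$ to $(0,\infty)$, and then to read off \eqref{est.coeff.v1} from the explicit solutions in terms of the modified Bessel functions of Section \ref{MBF}.

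First I would derive the ODEs. Inserting the expansion \eqref{exp.v1}, together with \eqref{exp.q1} and $b_{lm}(r,\tau)=B_{lm}r^{l}$ from Lemma \ref{lem.coeff.q1}, into the Helmholtz system \eqref{eq.helm}, and using $\Delta=\nabla(\nabla\cdot)-\nabla\times(\nabla\times)$ together with the identities \eqref{nabla.VSH}--\eqref{rot.VSH}, one checks that $\Delta$ sends $f(r)\bm{\Phi}_{lm}$ to $\big(f''+\tfrac{2}{r}f'-\tfrac{\mu_{l}}{r^{2}}f\big)\bm{\Phi}_{lm}$, sends the span of $\bm{Y}_{lm},\bm{\Psi}_{lm}$ into itself, and that $\nabla\hat{q}_{1}$ has no $\bm{\Phi}_{lm}$-component; hence the $\bm{\Phi}_{lm}$-projection of \eqref{eq.helm} decouples as
\[
-c^{(2)\prime\prime}_{lm}-\frac{2}{r}c^{(2)\prime}_{lm}+\Big(\ii\tau+\frac{\mu_{l}}{r^{2}}\Big)c^{(2)}_{lm}=0 .
\]
The same computation applied to $\nabla\times\hat{v}_{1}$ — taking the curl annihilates the pressure — shows that the $\bm{\Phi}_{lm}$-coefficient of $\nabla\times\hat{v}_{1}$, namely $d^{(2)}_{lm}:=-\tfrac{c^{r}_{lm}}{r}+c^{(1)\prime}_{lm}+\tfrac{c^{(1)}_{lm}}{r}$, solves the same equation. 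The divergence-free constraint $\nabla\cdot\hat{v}_{1}=0$ reads, via \eqref{div.VSH}, $c^{(1)}_{lm}=\tfrac{1}{\mu_{l}}\big(rc^{r\prime}_{lm}+2c^{r}_{lm}\big)$, and substituting this into the definition of $d^{(2)}_{lm}$ yields
\[
c^{r\prime\prime}_{lm}+\frac{4}{r}c^{r\prime}_{lm}-\frac{(l-1)(l+2)}{r^{2}}c^{r}_{lm}=\frac{\mu_{l}}{r}d^{(2)}_{lm},
\]
whose homogeneous solutions are $r^{l-1}$ and $r^{-l-2}$. All these coefficient functions are smooth on $(0,\infty)$, so $c^{r}_{lm},c^{(1)}_{lm},c^{(2)}_{lm}$, a priori defined on $(0,\rho)$ by \eqref{coef.exp.S.VSH}, extend uniquely to $(0,\infty)$.

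Next I would solve explicitly. Since $\ii\tau\in\C\setminus(-\infty,0]$, put $k=\sqrt{\ii\tau}$, so $|k|=\sqrt{|\tau|}$ and $\Re k=\sqrt{|\tau|/2}$; the two independent solutions of the first equation above are $r^{-1/2}I_{l+1/2}(kr)$ and $r^{-1/2}K_{l+1/2}(kr)$, the latter behaving like $r^{-l-1}$ at the origin by Lemma \ref{lem.MBF}\,(1). As $c^{(2)}_{lm}$ and $d^{(2)}_{lm}$ are smooth on $B_{\rho}$ hence bounded near $r=0$, we must have $c^{(2)}_{lm}(r,\tau)=A^{(2)}_{lm}(\tau)\,r^{-1/2}I_{l+1/2}(kr)$ and $d^{(2)}_{lm}(r,\tau)=D^{(2)}_{lm}(\tau)\,r^{-1/2}I_{l+1/2}(kr)$. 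Evaluating at $r_{0}:=\rho/2$, bounding $c^{(2)}_{lm}(r_{0},\tau)$ and $d^{(2)}_{lm}(r_{0},\tau)$ by $\sup_{B_{\rho}}(|\hat{v}_{1}|+|\nabla\hat{v}_{1}|)\lesssim_{\,v_{1}}1$ via \eqref{est.fourier.v1}, and bounding $|I_{l+1/2}(kr_{0})|$ from below by Lemma \ref{lem.MBF}\,(1) (legitimate since $\arg(kr_{0})=\pm\pi/4$ and $\sqrt{\tau_{1}}\,r_{0}\le|kr_{0}|\le\sqrt{\tau_{2}}\,\rho$), one gets $|A^{(2)}_{lm}(\tau)|+|D^{(2)}_{lm}(\tau)|\lesssim_{\,v_{1},\tau_{1},\tau_{2},l,\rho}1$, uniformly in $\tau$. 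Finally $c^{r}_{lm}$ is the solution of the inhomogeneous equation that is bounded near $0$: by variation of parameters against $r^{l-1}$ and $r^{-l-2}$, taking the lower limit $0$ in the $r^{-l-2}$-integral so that the particular solution stays bounded at the origin, $c^{r}_{lm}=(\text{particular solution})+A_{\ast}r^{l-1}$ with $A_{\ast}$ again bounded uniformly in $\tau$ by matching at $r_{0}$ and using \eqref{est.fourier.v1}; correspondingly $c^{(1)}_{lm}=\tfrac{1}{\mu_{l}}(rc^{r\prime}_{lm}+2c^{r}_{lm})$.

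The remaining step is the growth bound, and this is the part that needs care. From Lemma \ref{lem.MBF}\,(1)--(2) (with any fixed $\delta\in(0,\pi/4)$) one has $r^{-1/2}|I_{l+1/2}(kr)|\lesssim_{\,l,\tau_{1},\tau_{2},\rho}(1+r)^{l}e^{(\Re k)r}=(1+r)^{l}e^{\sqrt{|\tau|/2}\,r}$ for all $r\in(0,\infty)$, so $|c^{(2)}_{lm}|,|d^{(2)}_{lm}|\lesssim(1+r)^{l}e^{\sqrt{\tau_{2}/2}\,r}$. Plugging this into the variation-of-parameters formula and using that the exponential dominates (so integrating against $r^{l-1}$ or $r^{-l-2}$ only produces extra powers of $r$) gives $|c^{r}_{lm}|\lesssim(1+r)^{l}e^{\sqrt{\tau_{2}/2}\,r}$, and the same estimate for $|c^{r\prime}_{lm}|$ by differentiating the formula; hence $|c^{(1)}_{lm}|\lesssim(1+r)^{l+1}e^{\sqrt{\tau_{2}/2}\,r}$. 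Since $\sqrt{\tau_{2}/2}<\sqrt{\tau_{2}}$, every polynomial prefactor is absorbed, $(1+r)^{N}e^{\sqrt{\tau_{2}/2}\,r}\lesssim_{\,N,\tau_{2}}e^{\sqrt{\tau_{2}}\,r}$ on $[0,\infty)$, which yields \eqref{est.coeff.v1}. I expect the main obstacle to be precisely this bookkeeping — tracking the polynomial powers and choosing the particular solution so that it stays bounded at $r=0$ — together with verifying the decoupling of the $\bm{\Phi}_{lm}$-mode and the reduction to the Bessel-type equation; everything else is a direct computation with the identities of Section \ref{sec.prelim} and Lemma \ref{lem.MBF}.
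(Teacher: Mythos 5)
Your proposal is correct and follows the same overall skeleton as the paper (reduce to ODEs in $r$ for fixed $\tau$, identify the solutions bounded at the origin via $I_{l+1/2}(\sqrt{\ii\tau}\,r)$, extend to $(0,\infty)$, bound the free constants using \eqref{est.fourier.v1}, and absorb polynomial prefactors since $\Re\sqrt{\ii\tau}=\sqrt{|\tau|/2}<\sqrt{\tau_2}$), but the mechanics differ in two genuine ways. First, you decouple the $(c^{r}_{lm},c^{(1)}_{lm})$ system by taking the curl: the toroidal coefficient $d^{(2)}_{lm}=-\tfrac{c^{r}_{lm}}{r}+c^{(1)\prime}_{lm}+\tfrac{c^{(1)}_{lm}}{r}$ of $\nabla\times\hat v_1$ satisfies the \emph{homogeneous} Bessel-type equation (the pressure is annihilated), and $c^{r}_{lm}$ is then recovered from an Euler equation with indicial roots $l-1$, $-l-2$ sourced by $\tfrac{\mu_l}{r}d^{(2)}_{lm}$; this bypasses the pressure expansion of Lemma \ref{lem.coeff.q1} entirely. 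The paper instead keeps the pressure, uses $b_{lm}=B_{lm}r^l$ as the inhomogeneity in $-y''+(\ii\tau+\mu_l/r^2)y=-r^2 b_{lm}'$ for $y=r^2c^{r}_{lm}$, and solves by variation of parameters with the $I_{l+1/2},K_{l+1/2}$ kernels (the paper needs $b_{lm}$ anyway to define the global pressure $q_2$ in \eqref{def.q2}, so nothing is saved globally). Second, you pin down the free constants by pointwise evaluation at $r_0=\rho/2$, dividing by $|I_{l+1/2}(\sqrt{\ii\tau}\,r_0)|$, which is bounded below uniformly for $\tau_1\le|\tau|\le\tau_2$ by Lemma \ref{lem.MBF}\,(1); the paper uses a weighted $L^2$ projection over $(0,\rho)$ in the spirit of \cite[Theorem 2.4]{EncPer-Sal2021}. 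Both are legitimate here precisely because $\tau$ is truncated away from $0$ and $\infty$. Your observation that the true exponent is $\sqrt{\tau_2/2}$, leaving room to absorb the polynomial factors into $e^{\sqrt{\tau_2}r}$, is correct and is implicitly what the paper's estimates rely on as well.
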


\begin{proofx}{Lemma \ref{lem.coeff.v1} (first half)}
In the first half of the proof, we derive an explicit representation formula for the coefficients $c^{r}_{lm}, c^{(1)}_{lm}, c^{(2)}_{lm}$ by considering the system of ODEs that govern them. The formulas for the vector spherical harmonics in Section \ref{sec.SH} are applied. Similar calculations to those shown below can be found in \cite[Apendix A]{FLRO2022}.

Fix $\tau\in\R$ and recall $\mu_{l}$ defined in \eqref{def.mu}. Applying \eqref{nabla.VSH}, we have
\begin{align*}
\nabla \hat{q}_{1}
= \sum_{l=1}^{\infty} \sum_{m=-l}^{l}
\Big(
\dfrac{\dd b_{lm}}{\dd r}\bm{Y}_{lm}
+ \frac{b_{lm}}{r} \bm{\Psi}_{lm}
\Big).
\end{align*}
Applying \eqref{rot.VSH} twice, we have
\begin{align*}
&\nabla\times\nabla\times(
c^{r}_{lm} \bm{Y}_{lm}
+ c^{(1)}_{lm} \bm{\Psi}_{lm}
+ c^{(2)}_{lm} \bm{\Phi}_{lm}
)\\
&=
\Big\{
- \dfrac{\mu_{l}}{r} 
\Big(
\dfrac{\dd c^{(1)}_{lm}}{\dd r}
+ \dfrac{c^{(1)}_{lm}}{r}
- \dfrac{c^{r}_{lm}}{r}
\Big)
\Big\}
\bm{Y}_{lm}\\
&\quad
+ \Big\{
- \Big(
\dfrac{\dd}{\dd r}
+ \dfrac{1}{r}
\Big)
\Big(
\dfrac{\dd c^{(1)}_{lm}}{\dd r}
+ \dfrac{c^{(1)}_{lm}}{r}
+ \dfrac{c^{r}_{lm}}{r}
\Big)
\Big\}
\bm{\Psi}_{lm}\\
&\quad
+ \Big\{
\dfrac{\mu_{l}}{r^{2}} c^{(2)}_{lm}
- \Big(
\dfrac{\dd}{\dd r}
+ \dfrac{1}{r}
\Big)
\Big(
\dfrac{\dd c^{(2)}_{lm}}{\dd r}
+ \dfrac{c^{(2)}_{lm}}{r}
\Big)
\Big\}
\bm{\Phi}_{lm}.
\end{align*}
Thus, using the identity for smooth vector fields
\[
-\Delta w = \nabla\times\nabla\times w - \nabla (\nabla\cdot w)
\]
and the condition $\nabla\cdot \hat{v}_1=0$, we see that the first line of \eqref{eq.helm} is written as
\begin{align*}
0
&=
\ii\tau \hat{v}_1 - \Delta \hat{v}_1 + \nabla \hat{q}_{1}\\
&=
\ii\tau \hat{v}_1 + \nabla\times\nabla\times\hat{v}_1 + \nabla \hat{q}_{1}\\
&=
\sum_{l=1}^{\infty} \sum_{m=-l}^{l}
\bigg[
\Big\{
\ii \tau c^{r}_{lm}
- \dfrac{\mu_{l}}{r} 
\Big(
\dfrac{\dd c^{(1)}_{lm}}{\dd r}
+ \dfrac{c^{(1)}_{lm}}{r}
- \dfrac{c^{r}_{lm}}{r}
\Big)
+ \dfrac{\dd b_{lm}}{\dd r}
\Big\}
\bm{Y}_{lm}\\
&\qquad\qquad\qquad
+ \Big\{
\ii \tau c^{(1)}_{lm}
- \Big(
\dfrac{\dd}{\dd r}
+ \dfrac{1}{r}
\Big)
\Big(
\dfrac{\dd c^{(1)}_{lm}}{\dd r}
+ \dfrac{c^{(1)}_{lm}}{r}
+ \dfrac{c^{r}_{lm}}{r}
\Big)
+ \dfrac{b_{lm}}{r}
\Big\}
\bm{\Psi}_{lm}\\
&\qquad\qquad\qquad
+ \Big\{
\ii \tau c^{(2)}_{lm}
+ \dfrac{\mu_{l}}{r^{2}} c^{(2)}_{lm}
- \Big(
\dfrac{\dd}{\dd r}
+ \dfrac{1}{r}
\Big)
\Big(
\dfrac{\dd c^{(2)}_{lm}}{\dd r}
+ \dfrac{c^{(2)}_{lm}}{r}
\Big)
\Big\}
\bm{\Phi}_{lm}
\bigg].
\end{align*}
Applying \eqref{div.VSH}, we see that the second line of \eqref{eq.helm} is written as
\begin{align*}
0
=
\nabla\cdot \hat{v}_{1}
=
\sum_{l=1}^{\infty} \sum_{m=-l}^{l}
\Big(
\dfrac{\dd c^{r}_{lm}}{\dd r}
+ \dfrac{2c^{r}_{lm}}{r}
- \frac{\mu_{l} c^{(1)}_{lm}}{r}
\Big)
Y_{lm}.
\end{align*}
Then, after rearrangements, we find that $(c^{r}_{lm}, c^{(1)}_{lm})$ satisfies the system of ODEs
\begin{equation}\label{3odes.prf.thma}
\left\{
\begin{array}{ll}
-\dfrac{\dd}{\dd r} (rc^{(1)}_{lm})
+ c^{r}_{lm}
=
\dfrac{1}{\mu_{l}}
\Big\{
-\ii \tau (r^{2}c^{r}_{lm})
- r^{2}\dfrac{\dd b_{lm}}{\dd r}
\Big\}
&\mbox{in}\ (0,\infty),\\[10pt]
\dfrac{\dd}{\dd r}
\Big\{
-\dfrac{\dd}{\dd r} (rc^{(1)}_{lm})
+ c^{r}_{lm}
\Big\}
=
-\ii \tau (r c^{(1)}_{lm})
- b_{lm}
&\mbox{in}\ (0,\infty),\\[10pt]
\dfrac{\dd}{\dd r} (r^{2}c^{r}_{lm})
- \mu_{l} (rc^{(1)}_{lm})
= 0
&\mbox{in}\ (0,\infty)
\end{array}\right.
\end{equation}
and $c^{(2)}_{lm}$ satisfies the ODE
\begin{equation}\label{ode.prf.thma}
-\dfrac{\dd^2}{\dd r^2} (rc^{(2)}_{lm})
+ \Big(
\ii \tau
+ \dfrac{\mu_{l}}{r^{2}}
\Big) (rc^{(2)}_{lm})
=
0
\quad 
\mbox{in}\ (0,\infty).
\end{equation}

We claim that the system \eqref{3odes.prf.thma} is not overdetermined and equivalent with
\begin{equation}\label{2odes.prf.thma}
\left\{
\begin{array}{ll}
-\dfrac{\dd}{\dd r} (rc^{(1)}_{lm})
+ c^{r}_{lm}
=
\dfrac{1}{\mu_{l}}
\Big\{
-\ii \tau (r^{2}c^{r}_{lm})
- r^{2}\dfrac{\dd b_{lm}}{\dd r}
\Big\}
&\mbox{in}\ (0,\infty),\\[10pt]
\dfrac{\dd}{\dd r} (r^{2}c^{r}_{lm})
- \mu_{l} (rc^{(1)}_{lm})
= 0
&\mbox{in}\ (0,\infty).
\end{array}\right.
\end{equation}
Indeed, suppose that $(c^{r}_{lm}, c^{(1)}_{lm})$ is a solution of \eqref{2odes.prf.thma}. Differentiating the first line, we have
\begin{align}\label{eq1.equiv.prf.thma}
\dfrac{\dd}{\dd r}
\Big\{
-\dfrac{\dd}{\dd r} (rc^{(1)}_{lm})
+ c^{r}_{lm}
\Big\}
&=
\dfrac{1}{\mu_{l}}
\Big\{
-\ii \tau \dfrac{\dd}{\dd r}(r^{2}c^{r}_{lm})
- \dfrac{\dd}{\dd r}\Big(
r^{2}\dfrac{\dd b_{lm}}{\dd r}
\Big)
\Big\}.
\end{align}
Using the fact that $b_{lm}=b_{lm}(r)$ satisfies 
\begin{align*}
-\dfrac{\dd^{2} b_{lm}}{\dd r^{2}}
- \dfrac{2}{r}\dfrac{\dd b_{lm}}{\dd r}
+ \frac{\mu_{l}}{r^{2}} b_{lm}
=0
\end{align*}
as well as the second line of \eqref{2odes.prf.thma}, we rewrite the right of \eqref{eq1.equiv.prf.thma} as
\begin{align}\label{eq2.equiv.prf.thma}
\dfrac{1}{\mu_{l}}
\Big\{
-\ii \tau \dfrac{\dd}{\dd r}(r^{2}c^{r}_{lm})
- \dfrac{\dd}{\dd r}\Big(
r^{2}\dfrac{\dd b_{lm}}{\dd r}
\Big)
\Big\}
=
-\ii \tau (rc^{(1)}_{lm})
- b_{lm}.
\end{align}
From \eqref{eq1.equiv.prf.thma}--\eqref{eq2.equiv.prf.thma}, we see that the two systems \eqref{3odes.prf.thma} and \eqref{2odes.prf.thma} are equivalent.

To represent $c^{r}_{lm},c^{(1)}_{lm},c^{(2)}_{lm}$ explicitly, we solve \eqref{ode.prf.thma}--\eqref{2odes.prf.thma}. Let us first solve \eqref{2odes.prf.thma}. Differentiating the second line of \eqref{2odes.prf.thma} and using the first line, we have
\begin{align}\label{eq1.rep.prf.thma}
-\dfrac{\dd^{2}}{\dd r^{2}} (r^{2}c^{r}_{lm})
=
-\mu_{l} \dfrac{\dd}{\dd r} (rc^{(1)}_{lm})
=
-\mu_{l} c^{r}_{lm}
+ \Big\{
-\ii \tau (r^{2}c^{r}_{lm})
- r^{2}\dfrac{\dd b_{lm}}{\dd r}
\Big\}.
\end{align}
Setting $y(r)=r^{2}c^{r}_{lm}(r)$, we see that $y$ solves the ODE
\begin{align}\label{eq2.app.suppl}
-\frac{\dd^2 y}{\dd r^2}
+ \Big(
\ii \tau + \dfrac{\mu_{l}}{r^{2}}
\Big) y
=-r^{2} \dfrac{\dd b_{lm}}{\dd r},
\quad
0<r<\infty.
\end{align}
We recall that linearly independent solutions of the homogeneous equation of \eqref{eq2.app.suppl} are 
\begin{align*}
r^{\frac12} I_{l+\frac12}(\sqrt{\ii \tau} r)
\qquad \text{and} \qquad
r^{\frac12} K_{l+\frac12}(\sqrt{\ii \tau} r)
\end{align*}
with the Wronskian equal to $1$. Here $I_\nu(z), K_\nu(z)$ are the modified Bessel functions in Section \ref{MBF}. Thus we find that $c^{r}_{lm}=c^{r}_{lm}(r)$ bounded near $r=0$ is given by
\begin{align}\label{rep.cr}
c^{r}_{lm}(r)
= C^{r}_{lm} r^{-\frac32} I_{l+\frac12}(\sqrt{\ii \tau} r)
+ \Lambda_{lm}(r),
\quad
0<r<\infty.
\end{align}
Here $C^{r}_{lm}$ is some constant to be determined later and $\Lambda_{lm}(r)$ is defined by
\begin{align}\label{def.Lambda}
\begin{split}
\Lambda_{lm}(r)
&= 
-\int_{0}^{r} 
r^{-\frac32} K_{l+\frac12}(\sqrt{\ii \tau} r)
s^{\frac52} I_{l+\frac12}(\sqrt{\ii \tau} s)
\dfrac{\dd \hat{b}_{lm}}{\dd s}(s) \dd s\\
&\quad
- \int_{r}^{\infty}
r^{-\frac32} I_{l+\frac12}(\sqrt{\ii \tau} r)
s^{\frac52} K_{l+\frac12}(\sqrt{\ii \tau} s) 
\dfrac{\dd \hat{b}_{lm}}{\dd s}(s) \dd s\\
&= 
-lB_{lm}
\int_{0}^{r} 
r^{-\frac32} K_{l+\frac12}(\sqrt{\ii \tau} r)
s^{l+\frac32} I_{l+\frac12}(\sqrt{\ii \tau} s)
\dd s\\
&\quad
- lB_{lm}
\int_{r}^{\infty}
r^{-\frac32} I_{l+\frac12}(\sqrt{\ii \tau} r)
s^{l+\frac32} K_{l+\frac12}(\sqrt{\ii \tau} s) 
\dd s.
\end{split}
\end{align}
Then $c^{(1)}_{lm}$ is determined by the second line of \eqref{2odes.prf.thma} as
\begin{align}\label{rep.c1}
c^{(1)}_{lm}(r)
=
\frac{1}{\mu_{l} r}
\frac{\dd}{\dd r} (r^{2}c^{r}_{lm})
=
\frac{1}{\mu_{l}}
\Big(
2 c^{r}_{lm}
+ r \frac{\dd c^{r}_{lm}}{\dd r}
\Big),
\quad
0<r<\infty.
\end{align}
Next we solve \eqref{ode.prf.thma}. Putting $y(r)=rc^{(2)}_{lm}(r)$, we see that $y$ solves the homogeneous equation of \eqref{eq1.rep.prf.thma}. Thus $c^{(2)}_{lm}=c^{(2)}_{lm}(r)$ bounded near $r=0$ is given by
\begin{align}\label{rep.c2}
c^{(2)}_{lm}(r)
= C^{(2)}_{lm} r^{-\frac12} I_{l+\frac12}(\sqrt{\ii \tau} r),
\quad
0<r<\infty
\end{align}
with some constant $C^{(2)}_{lm}$ to be determined later.

We determine the constants $C^{r}_{lm},C^{(2)}_{lm}$ as follows. This approach is inspired by the proof of \cite[Theorem 2.4]{EncPer-Sal2021}. We first consider $C^{r}_{lm}$ in \eqref{rep.cr}. Multiplying the both sides by
\[
r^{-\frac32} \overbar{I_{l+\frac12}(\sqrt{\ii \tau} r)} r^{2}
\]
and integrating over $(0,\rho)$, we find
\begin{align}\label{def.Cr}
C^{r}_{lm}
=
\frac{1}{\mathcal{I}_{l+\frac12,-1}(\sqrt{\ii \tau})}
\int_{0}^{\rho}
(c^{r}_{lm}(r) - \Lambda_{lm}(r))
r^{-\frac32} \overbar{I_{l+\frac12}(\sqrt{\ii \tau} r)}
r^{2} \dd r.
\end{align}
Here we have defined
\[
\mathcal{I}_{l+\frac12,j}(\sqrt{\ii \tau})
=
\int_{0}^{\rho} 
r^{j} |I_{l+\frac12}(\sqrt{\ii \tau} r)|^{2}
\dd r
\]
for an admissible $j\in\Z$. In the similar manner, $C^{(2)}_{lm}$ in \eqref{rep.c2} is determined as
\begin{align}\label{def.C2}
C^{(2)}_{lm}
=
\frac{1}{\mathcal{I}_{l+\frac12,1}(\sqrt{\ii \tau})}
\int_{0}^{\rho}
c^{(2)}_{lm}(r)
r^{-\frac12} \overbar{I_{l+\frac12}(\sqrt{\ii \tau} r)}
r^{2} \dd r.
\end{align}
Thus the desired formulas \eqref{rep.cr}, \eqref{rep.c1} and \eqref{rep.c2} with \eqref{def.Cr}--\eqref{def.C2} are obtained.
\end{proofx}

Before continuing with the proof of Lemma \ref{lem.coeff.v1}, let us give the estimates for \eqref{rep.cr}, \eqref{rep.c1} and \eqref{rep.c2} as well as for \eqref{def.Cr}--\eqref{def.C2} in the following two lemmas.

\begin{lemma}\label{lem.est.crc1c2}
We have
\begin{align*}
\begin{split}
&|c^{r}_{lm}(r,\tau)|
+ |c^{(1)}_{lm}(r,\tau)|
+ |c^{(2)}_{lm}(r,\tau)|\\
&\lesssim_{\,\tau_{1},\tau_{2},l}
(|C^{r}_{lm}(\tau)|
+ |C^{(2)}_{lm}(\tau)|
+ 1)
e^{\sqrt{\tau_{2}}r},
\quad r\in(0,\infty)
\end{split}
\end{align*}
uniformly in $\tau\in\R$ with $0<\tau_{1}\le|\tau|\le\tau_{2}<\infty$.
\end{lemma}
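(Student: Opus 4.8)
The estimates in Lemma \ref{lem.est.crc1c2} reduce to controlling the four building blocks in the representation formulas: the homogeneous term $r^{-3/2}I_{l+1/2}(\sqrt{\ii\tau}r)$ in \eqref{rep.cr} and \eqref{rep.c2}, the inhomogeneous term $\Lambda_{lm}(r)$ in \eqref{def.Lambda}, and the derivative combination that produces $c^{(1)}_{lm}$ in \eqref{rep.c1}. The plan is to estimate each of these on the two regimes $\sqrt{|\tau|}r \lesssim 1$ and $\sqrt{|\tau|}r \gtrsim 1$ using the two parts of Lemma \ref{lem.MBF}, observing that throughout we may assume $0<\tau_1\le|\tau|\le\tau_2$, so $\sqrt{|\tau|}\approx_{\tau_1,\tau_2} 1$ and $|\Re\sqrt{\ii\tau}| = \sqrt{|\tau|}/\sqrt2 \le \sqrt{\tau_2}/\sqrt2$. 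First I would record that on $\{\sqrt{|\tau|}r \le M\}$, Lemma \ref{lem.MBF}(1) gives $|r^{-3/2}I_{l+1/2}(\sqrt{\ii\tau}r)| \approx r^{-3/2}(\sqrt{|\tau|}\,r)^{l+1/2} = |\tau|^{(l+1/2)/2} r^{l-1}$, which is bounded by a constant depending on $l,\tau_1,\tau_2$ (since $l\ge1$), and on $\{\sqrt{|\tau|}r\ge M\}$, Lemma \ref{lem.MBF}(2) gives $|r^{-3/2}I_{l+1/2}(\sqrt{\ii\tau}r)| \approx r^{-3/2}(\sqrt{|\tau|}\,r)^{-1/2}e^{\Re(\sqrt{\ii\tau}r)} \lesssim_{\tau_1,\tau_2} e^{\sqrt{\tau_2}r/\sqrt2} \le e^{\sqrt{\tau_2}r}$. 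So the $C^r_{lm}$- and $C^{(2)}_{lm}$-terms are each bounded by $(|C^r_{lm}|+|C^{(2)}_{lm}|)e^{\sqrt{\tau_2}r}$ as claimed.

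Next I would handle $\Lambda_{lm}(r)$. Using $|\hat b_{lm}| = |b_{lm}| = |B_{lm}|r^l$ and the bound $|B_{lm}| \lesssim_{\tau_2,l,\rho} 1$ from \eqref{est.B}, both integrals in \eqref{def.Lambda} are products of a Bessel factor in $r$ times an integral over $s$ of $s^{l+3/2}$ against a Bessel factor in $s$. For the first integral $\int_0^r r^{-3/2}K_{l+1/2}(\sqrt{\ii\tau}r)\,s^{l+3/2}I_{l+1/2}(\sqrt{\ii\tau}s)\dd s$: on the range where the arguments are small one gets $r^{-3/2}(\sqrt{|\tau|}r)^{-(l+1/2)}$ times $\int_0^r s^{l+3/2}(\sqrt{|\tau|}s)^{l+1/2}\dd s \approx r^{2l+2}$, giving a net $\lesssim_l r^{l-1}$, bounded; on the large-argument range the exponentials combine as $e^{-\Re(\sqrt{\ii\tau}r)}\int^r e^{\Re(\sqrt{\ii\tau}s)}\dd s \lesssim 1$ after absorbing algebraic factors, with the overall growth again no worse than $e^{\sqrt{\tau_2}r}$. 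The second integral $\int_r^\infty r^{-3/2}I_{l+1/2}(\sqrt{\ii\tau}r)\,s^{l+3/2}K_{l+1/2}(\sqrt{\ii\tau}s)\dd s$ is where convergence at $s=\infty$ matters: since $K_{l+1/2}(\sqrt{\ii\tau}s)$ decays like $s^{-1/2}e^{-\Re(\sqrt{\ii\tau}s)}$ and $\Re\sqrt{\ii\tau}=\sqrt{|\tau|}/\sqrt2 > 0$, the integrand is exponentially small in $s$ and the integral converges and is bounded by $e^{\Re(\sqrt{\ii\tau}r)}\cdot(\text{algebraic})\cdot e^{-\Re(\sqrt{\ii\tau}r)} \lesssim_{l,\tau_1,\tau_2} 1$ on the large range, and by an elementary power estimate on the small range. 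Altogether $|\Lambda_{lm}(r)| \lesssim_{\tau_1,\tau_2,l,\rho} e^{\sqrt{\tau_2}r}$.

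It then remains to bound $c^{(1)}_{lm}$, which by \eqref{rep.c1} is $\mu_l^{-1}(2c^r_{lm} + r\,\dd c^r_{lm}/\dd r)$. The term $2c^r_{lm}$ is already controlled; for $r\,\dd c^r_{lm}/\dd r$ I would differentiate \eqref{rep.cr}, using the recurrence relations \eqref{eq.rel} to express $\dd I_{l+1/2}/\dd z$ in terms of $I_{l+1/2}$ and $I_{l-1/2}$ (or $I_{l+3/2}$), and differentiate $\Lambda_{lm}$ — here the two integrals contribute boundary terms which cancel against each other by the Wronskian normalization \eqref{def.W}, leaving only terms of the same structure already estimated. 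Running the same two-regime Bessel estimates gives $|r\,\dd c^r_{lm}/\dd r| \lesssim_{\tau_1,\tau_2,l}(|C^r_{lm}|+1)e^{\sqrt{\tau_2}r}$, and combining everything with the factor $\mu_l^{-1}$ yields the stated bound. The main obstacle I anticipate is purely bookkeeping: carefully tracking how the powers of $\sqrt{|\tau|}$ and of $r$ combine across the matching of the small- and large-argument Bessel asymptotics in the two improper integrals defining $\Lambda_{lm}$, making sure the exponential rate never exceeds $\sqrt{\tau_2}$ (rather than, say, $\sqrt2\,\sqrt{\tau_2}$) and that the constants depend only on the allowed parameters $\tau_1,\tau_2,l,\rho$; no single step is deep, but the estimate must be assembled with some care to keep the exponent sharp.
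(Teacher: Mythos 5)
Your proposal is correct and follows essentially the same route as the paper: two-regime estimates for $r^{-\frac32}I_{l+\frac12}(\sqrt{\ii\tau}r)$ via Lemma \ref{lem.MBF}, case-by-case bounds on the two integrals defining $\Lambda_{lm}$ using \eqref{est.B} and the exponential decay of $K_{l+\frac12}$ at infinity, differentiation of \eqref{rep.cr} together with \eqref{eq.rel} for $c^{(1)}_{lm}$, and the direct bound for $c^{(2)}_{lm}$. The only cosmetic slip is the attribution of the boundary-term cancellation in $\dd\Lambda_{lm}/\dd r$ to the Wronskian \eqref{def.W}: the two boundary terms are literally the same product $r^{l}K_{l+\frac12}(\sqrt{\ii\tau}r)I_{l+\frac12}(\sqrt{\ii\tau}r)$ with opposite signs, so they cancel identically (the Wronskian would only enter at the level of the second derivative).
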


\begin{proof}
Let us first consider $c^{r}_{lm}(r,\tau)$ defined in \eqref{rep.cr}. By Lemma \ref{lem.MBF}, we compute
\begin{align*}
\begin{split}
|r^{-\frac32} I_{l+\frac12}(\sqrt{\ii \tau} r)|
&\lesssim_{\,l}
\left\{
\begin{array}{ll}
|\tau|^{\frac{l}2+\frac14}
r^{l-1}
&\mbox{if}\
r \le (\Re\sqrt{\ii \tau})^{-1},\\[5pt]
|\tau|^{-\frac14}
r^{-2}
e^{\Re(\sqrt{\ii \tau})r} 
&\mbox{if}\ 
r\ge(\Re\sqrt{\ii \tau})^{-1}
\end{array}\right.\\[5pt]
&\lesssim_{\,l}
\left\{
\begin{array}{ll}
|\tau|^{\frac{l}2+\frac14}
r^{l-1}
&\mbox{if}\
r \le (\Re\sqrt{\ii \tau})^{-1},\\[5pt]
|\tau|^{\frac34}
e^{\Re(\sqrt{\ii \tau})r} 
&\mbox{if}\ 
r\ge(\Re\sqrt{\ii \tau})^{-1}
\end{array}\right.\\[5pt]
&\lesssim_{\,l}
\left\{
\begin{array}{ll}
\tau_{2}^{\frac{l}{2}+\frac{1}{4}}
r^{l-1}
&\mbox{if}\
r \le (\Re\sqrt{\ii \tau})^{-1},\\[5pt]
\tau_{2}^{\frac34}
e^{\sqrt{\tau_{2}}r} 
&\mbox{if}\
r\ge(\Re\sqrt{\ii \tau})^{-1},
\end{array}\right.
\end{split}
\end{align*}
which leads to
\begin{align}\label{app.est1.coeff}
\begin{split}
|r^{-\frac32} I_{l+\frac12}(\sqrt{\ii \tau} r)|
\lesssim_{\,\tau_{2},l}
e^{\sqrt{\tau_{2}}r},
\quad r\in(0,\infty).
\end{split}
\end{align}
Next we estimate $\Lambda_{lm}(r,\tau)$ in \eqref{rep.cr} defined by \eqref{def.Lambda}. From
\begin{align*}
&
\big|
r^{-\frac32} K_{l+\frac12}(\sqrt{\ii \tau} r)
s^{l+\frac32} I_{l+\frac12}(\sqrt{\ii \tau} s)
\big|\\
&\lesssim_{\,l}
\left\{
\begin{array}{ll}
r^{-l-2}
s^{2l+2}
&\mbox{if}\
s \le r \le (\Re\sqrt{\ii \tau})^{-1},\\[5pt]
|\tau|^{\frac{l}{2}}
r^{-2}
e^{-\Re(\sqrt{\ii \tau})r}
s^{2l+2}
&\mbox{if}\ 
s \le (\Re\sqrt{\ii \tau})^{-1} \le r,\\[5pt]
|\tau|^{-\frac12}
r^{-2}
s^{l+1}
e^{-(\Re\sqrt{\ii \tau})(r-s)}
&\mbox{if}\ 
(\Re\sqrt{\ii \tau})^{-1} \le s \le r,
\end{array}\right.
\end{align*}
we see that
\begin{align*}
&
\bigg|
\int_{0}^{r}
r^{-\frac32} K_{l+\frac12}(\sqrt{\ii \tau} r)
s^{l+\frac32} I_{l+\frac12}(\sqrt{\ii \tau} s)
\dd s
\bigg|\\
&\lesssim_{\,l}
\left\{
\begin{array}{ll}
r^{l+1}
&\mbox{if}\
r \le (\Re\sqrt{\ii \tau})^{-1},\\[5pt]
|\tau|^{-\frac{l}{2}-\frac32}
r^{-2}
e^{-\Re(\sqrt{\ii \tau})r}
+ |\tau|^{-1}
r^{l-1}
&\mbox{if}\ 
r \ge (\Re\sqrt{\ii \tau})^{-1}
\end{array}\right.\\
&\lesssim_{\,l}
r^{l+1},
\quad r\in(0,\infty).
\end{align*}
In the similar manner, from
\begin{align*}
&
\big|
r^{-\frac32} I_{l+\frac12}(\sqrt{\ii \tau} r)
s^{l+\frac32} K_{l+\frac12}(\sqrt{\ii \tau} s)
\big|\\
&\lesssim_{\,l}
\left\{
\begin{array}{ll}
r^{l-1}
s
&\mbox{if}\
r \le s \le (\Re\sqrt{\ii \tau})^{-1},\\[5pt]
|\tau|^{\frac{l}2}
r^{l-1}
s^{l+1}
e^{-\Re(\sqrt{\ii \tau})s}
&\mbox{if}\ 
r \le (\Re\sqrt{\ii \tau})^{-1} \le s,\\[5pt]
|\tau|^{-\frac12}
r^{-2}
s^{l+1}
e^{(\Re\sqrt{\ii \tau})(r-s)}
&\mbox{if}\ 
(\Re\sqrt{\ii \tau})^{-1} \le r \le s,
\end{array}\right. 
\end{align*}
we see that
\begin{align*}
&
\bigg|
\int_{r}^{\infty}
r^{-\frac32} I_{l+\frac12}(\sqrt{\ii \tau} r)
s^{l+\frac32} K_{l+\frac12}(\sqrt{\ii \tau} s)
\dd s
\bigg|\\
&\lesssim_{\,l}
\left\{
\begin{array}{ll}
|\tau|^{-1}
r^{l-1}
&\mbox{if}\
r \le (\Re\sqrt{\ii \tau})^{-1},\\[5pt]
|\tau|^{-1}
r^{l-1}
&\mbox{if}\ 
r \ge (\Re\sqrt{\ii \tau})^{-1}
\end{array}\right.\\
&\lesssim_{\,l}
\tau_{1}^{-1}
r^{l-1},
\quad r\in(0,\infty).
\end{align*}
Then, by the definition \eqref{def.Lambda} and the estimate of $B_{lm}(\tau)$ in \eqref{est.B}, we have
\begin{align}\label{est.Lambda}
\begin{split}
&|\Lambda_{lm}(r,\tau)|
\lesssim_{\,\tau_{1},\tau_{2},l}
(1+r)^{l+1},
\quad r\in(0,\infty).
\end{split}
\end{align}
Consequently, from \eqref{app.est1.coeff}--\eqref{est.Lambda}, we obtain
\begin{align}\label{est.cr}
|c^{r}_{lm}(r)|
\lesssim_{\,\tau_{1},\tau_{2},l}
(|C^{r}_{lm}(\tau)| + 1)
e^{\sqrt{\tau_{2}}r},
\quad r\in(0,\infty).
\end{align}
Next we consider $c^{(1)}_{lm}(r,\tau)$ defined in \eqref{rep.c1}. Observe that
\begin{align*}
\frac{\dd c^{r}_{lm}}{\dd r}(r)
&=
C^{r}_{lm}
\frac{\dd}{\dd r}
\big(
r^{-\frac32}
I_{l+\frac12}(\sqrt{\ii \tau} r)
\big)\\
&\quad
-lB_{lm}
\int_{0}^{r} 
\frac{\dd}{\dd r}
\big(
r^{-\frac32} K_{l+\frac12}(\sqrt{\ii \tau} r)
\big)
s^{l+\frac32} I_{l+\frac12}(\sqrt{\ii \tau} s)
\dd s\\
&\quad
-
lB_{lm}
\int_{r}^{\infty}
\frac{\dd}{\dd r}
\big(
r^{-\frac32} I_{l+\frac12}(\sqrt{\ii \tau} r)
\big)
s^{l+\frac32} K_{l+\frac12}(\sqrt{\ii \tau} s) 
\dd s.
\end{align*}
Then, by the relation \eqref{eq.rel} and Lemma \ref{lem.MBF}, in the similar manner as above, we obtain
\begin{align}\label{est.c1}
|c^{(1)}_{lm}(r,\tau)|
&\lesssim_{\,\tau_{1},\tau_{2},l}
(|C^{r}_{lm}(\tau)| + 1)
e^{\sqrt{\tau_{2}}r}.
\end{align}
Finally, by Lemma \ref{lem.MBF}, we estimate $c^{(2)}_{lm}(r,\tau)$ defined in \eqref{rep.c2} as
\begin{align}\label{est.c2}
|c^{(2)}_{lm}(r,\tau)|
&\lesssim_{\,\tau_{2},l}
|C^{(2)}_{lm}(\tau)| e^{\sqrt{\tau_{2}}r}.
\end{align}
Then the assertion follows from \eqref{est.cr}--\eqref{est.c2}.
\end{proof}

Next we give the estimate of $C^{r}_{lm}$ in \eqref{def.Cr} and $C^{(2)}_{lm}$ in \eqref{def.C2}.

\begin{lemma}\label{lem.est.CrC2}
We have
\begin{align*}
|C^{r}_{lm}(\tau)|
+ |C^{(2)}_{lm}(\tau)|
&\lesssim_{\,v_{1},\tau_{1},\tau_{2},l,\rho}
1
\end{align*}
uniformly in $\tau\in\R$ with $0<\tau_{1}\le|\tau|\le\tau_{2}<\infty$.
\end{lemma}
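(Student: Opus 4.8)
The plan is to bound each of $|C^{r}_{lm}(\tau)|$ and $|C^{(2)}_{lm}(\tau)|$ by dividing, in the defining formulas \eqref{def.Cr} and \eqref{def.C2}, the numerator by the denominator, keeping every estimate uniform in $\tau$ over the admissible range $0<\tau_{1}\le|\tau|\le\tau_{2}<\infty$ furnished by the truncation Lemma \ref{lem.Trunc}. The structural point is that on $(0,\rho)$ the argument of the Bessel function stays in a fixed compact region: since $\tau\in\R\setminus\{0\}$, the number $\sqrt{\ii\tau}$ lies on a ray of argument $\pm\pi/4$, hence $\sqrt{\ii\tau}\,r\in\Sigma_{\pi/2}$ and $|\sqrt{\ii\tau}\,r|=\sqrt{|\tau|}\,r\le\sqrt{\tau_{2}}\,\rho=:M$ for every $r\in(0,\rho)$. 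Thus Lemma \ref{lem.MBF} (\ref{item1.lem1.MBF}) applies with $\nu=l+\tfrac12$ and this fixed $M$, giving the two-sided estimate $|I_{l+\frac12}(\sqrt{\ii\tau}\,r)|\approx_{\,l,\tau_{2},\rho}(\sqrt{|\tau|}\,r)^{l+\frac12}$ for $r\in(0,\rho)$, uniformly in $\tau$. Inserting the resulting lower bound $|I_{l+\frac12}(\sqrt{\ii\tau}\,r)|^{2}\gtrsim_{\,l,\tau_{2},\rho}\tau_{1}^{l+\frac12}\,r^{2l+1}$ into $\mathcal{I}_{l+\frac12,j}(\sqrt{\ii\tau})=\int_{0}^{\rho}r^{j}|I_{l+\frac12}(\sqrt{\ii\tau}\,r)|^{2}\dd r$ and integrating (the exponent $j+2l+2$ is positive for $j=\pm1$ and $l\ge1$) gives $\mathcal{I}_{l+\frac12,-1}(\sqrt{\ii\tau})+\mathcal{I}_{l+\frac12,1}(\sqrt{\ii\tau})\gtrsim_{\,l,\tau_{1},\tau_{2},\rho}1$, so the two denominators never degenerate.

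It then remains to bound the numerators on $(0,\rho)$. The Bessel factors there are harmless: the two-sided estimate gives $r^{1/2}|I_{l+\frac12}(\sqrt{\ii\tau}\,r)|\lesssim_{\,l,\tau_{2},\rho}r^{l+1}$ and $r^{3/2}|I_{l+\frac12}(\sqrt{\ii\tau}\,r)|\lesssim_{\,l,\tau_{2},\rho}r^{l+2}$, both uniformly bounded on $(0,\rho)$. For the coefficients, recall from \eqref{coef.exp.S.VSH} that $c^{r}_{lm}(r,\tau)=\langle\hat{v}_{1}(r,\cdot,\tau),\bm{Y}_{lm}\rangle_{S}$ and $c^{(2)}_{lm}(r,\tau)=\tfrac{1}{l(l+1)}\langle\hat{v}_{1}(r,\cdot,\tau),\bm{\Phi}_{lm}\rangle_{S}$; by the Cauchy--Schwarz inequality on $S$, the normalizations $\|\bm{Y}_{lm}\|_{L^2(S)}=1$ and $\|\bm{\Phi}_{lm}\|_{L^2(S)}=\sqrt{l(l+1)}$, and the uniform bound $\sup_{B_{\rho}}|\hat{v}_{1}(\cdot,\tau)|\lesssim_{\,v_{1}}1$ read off from \eqref{est.fourier.v1} with $k=0$ and $\alpha=0$, we obtain $|c^{r}_{lm}(r,\tau)|+|c^{(2)}_{lm}(r,\tau)|\lesssim_{\,v_{1},l}1$ for $r\in(0,\rho)$, uniformly in $\tau$. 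The only remaining term, $\Lambda_{lm}(r,\tau)$, is controlled by the already proved estimate \eqref{est.Lambda}, which on $(0,\rho)$ reads $|\Lambda_{lm}(r,\tau)|\lesssim_{\,v_{1},\tau_{1},\tau_{2},l,\rho}1$. Multiplying these bounds and integrating over $(0,\rho)$ shows that the numerators of \eqref{def.Cr} and \eqref{def.C2} are $\lesssim_{\,v_{1},\tau_{1},\tau_{2},l,\rho}1$; dividing by the denominator lower bound yields the claim.

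The one genuinely delicate ingredient is the uniform non-degeneracy of $\mathcal{I}_{l+\frac12,j}(\sqrt{\ii\tau})$: without a lower bound on $|\tau|$ the weight $|I_{l+\frac12}(\sqrt{\ii\tau}\,r)|^{2}\approx|\tau|^{l+\frac12}r^{2l+1}$ would degenerate as $\tau\to0$, and then $C^{r}_{lm}$ and $C^{(2)}_{lm}$ could blow up. This is precisely why the frequency truncation $\tau_{1}\le|\tau|\le\tau_{2}$ was built into Lemma \ref{lem.Trunc}; the rest is just bookkeeping of the uniform-in-$\tau$ bounds established earlier.
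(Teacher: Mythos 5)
Your proof is correct and follows essentially the same route as the paper: both arguments rest on the uniform non-degeneracy of the denominator, $\mathcal{I}_{l+\frac12,j}(\sqrt{\ii\tau})\approx_{\,l,\rho}|\tau|^{l+\frac12}\gtrsim_{\,\tau_{1},l,\rho}1$ from Lemma \ref{lem.MBF}, combined with uniform bounds for $c^{r}_{lm}$, $c^{(2)}_{lm}$ and $\Lambda_{lm}$ on $(0,\rho)$. The only (harmless) difference is in the numerator: the paper applies the Cauchy--Schwarz inequality in the $r$-integral and controls $\int_{0}^{\rho}|c^{r}_{lm}|^{2}r^{2}\dd r$ by $\|\hat{v}_{1}(\tau)\|_{L^{2}(B_{\rho})}^{2}$ via Bessel's inequality for the vector spherical harmonic expansion, whereas you bound $c^{r}_{lm}$ and $c^{(2)}_{lm}$ pointwise through Cauchy--Schwarz on $S$ and the $L^{\infty}$ bound \eqref{est.fourier.v1}; both yield the same uniform constant.
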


\begin{proof}
By Lemma \ref{lem.MBF}, we see that $\mathcal{I}_{l+\frac12,j}(\sqrt{\ii \tau})$ in \eqref{def.Cr}--\eqref{def.C2} is estimated as
\begin{align}\label{est1.prf.lem.est.CrC2}
\mathcal{I}_{l+\frac12,j}(\sqrt{\ii \tau})
=
\int_{0}^{\rho} 
r^{j} |I_{l+\frac12}(\sqrt{\ii \tau} r)|^{2}
\dd r
\approx_{\,l,\rho}
|\tau|^{l+\frac12},
\quad
j=1,-1.
\end{align}
We then consider $C^{r}_{lm}$ defined in \eqref{def.Cr}. By the H\"{o}lder inequality, we have
\begin{align}\label{est2.prf.lem.est.CrC2}
|C^{r}_{lm}(\tau)|
\le
\frac{1}{\mathcal{I}_{l+\frac12,-1}(\sqrt{\ii \tau})^{\frac12}}
\bigg(
\int_{0}^{\rho}
|c^{r}_{lm}(r,\tau) - \Lambda_{lm}(r,\tau)|^{2}
r^{2} \dd r
\bigg)^{\frac12}.
\end{align}
The definition of $c^{r}_{lm}(r,\tau)$ in \eqref{exp.v1} implies that
\begin{align*}
\begin{split}
\bigg(
\int_{0}^{\rho}
|c^{r}_{lm}(r,\tau)|^{2}
r^{2} \dd r
\bigg)^{\frac12}
&=
\bigg(
\int_{0}^{\rho}
|\langle \hat{v}_{1}, \bm{Y}_{lm}\rangle_{S}(r,\tau)|^{2}
r^{2} \dd r
\bigg)^{\frac12}\\
&\le
\|\hat{v}_{1}(\tau)\|_{L^{2}(B_{\rho})},
\end{split}
\end{align*}
where Lemma \ref{lem.VSH.exp} (\ref{item1.lem.VSH.exp}) is used in the second line. The estimate \eqref{est.Lambda} shows that
\begin{align*}
\bigg(
\int_{0}^{\rho}
|\Lambda_{lm}(r,\tau)|^{2}
r^{2} \dd r
\bigg)^{\frac12}
\lesssim_{\,\tau_{1},\tau_{2},l,\rho}
1.
\end{align*}
Hence the triangle inequality applied to \eqref{est2.prf.lem.est.CrC2} and \eqref{est1.prf.lem.est.CrC2} give
\begin{align*}
|C^{r}_{lm}(\tau)|
\lesssim_{\,v_{1},\tau_{1},\tau_{2},l,\rho}
\mathcal{I}_{l+\frac12,-1}(\sqrt{\ii \tau})^{-\frac12}
\lesssim_{\,v_{1},\tau_{1},\tau_{2},l,\rho}
1.
\end{align*}
It is easy to check that $C^{(2)}_{lm}(\tau)$ satisfies the same estimate. This completes the proof.
\end{proof}

Now we can continue with the proof of Lemma \ref{lem.coeff.v1} and complete it.

\begin{proofx}{Lemma \ref{lem.coeff.v1} (second half)}
The estimate \eqref{est.coeff.v1} for the coefficients $c^{r}_{lm}, c^{(1)}_{lm}, c^{(2)}_{lm}$ is a consequence of the formulas \eqref{rep.cr}, \eqref{rep.c1} and \eqref{rep.c2} with \eqref{def.Cr}--\eqref{def.C2} combined with the estimates in Lemmas \ref{lem.est.crc1c2}--\ref{lem.est.CrC2}. This completes the proof of Lemma \ref{lem.coeff.v1}.
\end{proofx}

Thanks to Lemma \ref{lem.coeff.v1}, the truncation $v_{2}$ in Lemma \ref{lem.Trunc} can be defined in the whole space-time $\R^{4}$. Gathering the estimates \eqref{cons.lem.Ext}, \eqref{capp} and \eqref{ineq.lem.Trunc} and taking $\ep>0$ smaller again, we see that $v_{2}$ approximates $v$, up to $\ep$, in the topology of $C(K)^{3}$. Next we consider the pressure $q_{2}$ associated with $v_{2}$. Using the same $L,\tau_{1},\tau_{2}$ in Lemma \ref{lem.Trunc}, we define
\begin{align}\label{def.q2}
\begin{split}
q_{2}(x,t)
=q_{2}(r,\theta,\phi,t)
:=
\frac{1}{2\pi}
\sum_{l=1}^{L} \sum_{m=-l}^{l}
\int_{\tau_{1} < |\tau| < \tau_{2}}
b_{lm}(r,\tau) Y_{lm}
e^{\ii \tau t}
\dd \tau.
\end{split}
\end{align}
with $b_{lm}(r,\tau)$ in Lemma \ref{lem.coeff.q1}. Then the restriction $(v_{2},q_{2})|_{\R^{4}_{+}}$ satisfies \eqref{app} and the bound \eqref{growth.thma}. Therefore, putting $(u,p)=(v_{2},q_{2})|_{\R^{4}_{+}}$, we complete the proof of Theorem \ref{thma}.

   \subsection{Discussions on asymptotics at infinity}
   \label{Disc}

Here we give some details of Theorem \ref{thma} and discuss the comparison with \cite[Theorem 1.2]{EGFPS2019} for the heat equation where global approximations are decaying at infinity.
\begin{itemize}
\item
Since $v_{2}$ and $q_{2}$ are defined in \eqref{def.v2} and \eqref{def.q2}, respectively, the estimate \eqref{est.v1} and the Riemann-Lebesgue theorem imply that $(u,p)=(v_{2},q_{2})|_{\R^{4}_{+}}$ decays in time
\begin{align}
\lim_{t\to\infty} \big(u(x,t),p(x,t)\big)=0
\end{align}
for each fixed $x\in\R^{3}$. Thus $(u,p)$ on $\R^{4}_{+}$ grows in space but decays in time.

\item
In the case of the heat equation, the estimate $|v_{2}(x,t)|\le Ce^{c|x|}$ enables us to appeal to the uniqueness result \cite[Section 7.1 (b)]{Joh1991book}. Then we see that $v_{2}$ is uniquely represented by the fundamental solution of the heat operator with the given data $f(x):=v(x,0)$. Therefore, by cutting off $f(x)$, we conclude that $v_{2}$ can be approximated in the topology of $C(K)$ by $u=e^{t\Delta} u_{0}$ using the heat semigroup $e^{t\Delta}$ and a smooth, compactly supported initial data $u_{0}$. Hence the global approximation with decay at infinity follows for the heat equation. We emphasize that the procedure above is not applicable to the Stokes system as the uniqueness of solutions without decay at infinity is impossible as implied by parasitic solutions of the form \eqref{para}.
\end{itemize}

    \section{Proof of Theorem \ref{thmb}}
    \label{sec-proofB}

In this section, we prove Theorem \ref{thmb}. Without loss of generality, by translation and rescaling, we may consider that $K=[-2,2]^{3}\times[1,3]\subset\R^{4}_{+}$. Set $\tilde K = [-1,1]^{3}\subset\R^{3}$.

    \subsection{A local parabolic Harnack inequality}

We first recall the following local parabolic Harnack inequality on $\tilde K\times[2,3]$ for nonnegative solutions of the heat equations in $K$; see \cite[Section 7.1, Theorem 10]{Eva2010book}.

\begin{lemma}\label{HH}
There is a constant $C_{0} >0$ such that, for any nonnegative smooth solution $w$ of $\partial_{t} w - \Delta w = 0$ in $K$, we have
\[
\sup_{x\in \tilde K} w(x,2)
\le C_{0} \inf_{x\in \tilde K} w(x,3).
\]
\end{lemma}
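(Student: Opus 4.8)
The plan is to deduce Lemma~\ref{HH} directly from the classical parabolic Harnack inequality for nonnegative caloric functions, in the form stated in \cite[Section 7.1, Theorem 10]{Eva2010book}. The only real task is to fit the explicit geometry of $K$, of $\tilde K$, and of the two time levels $t=2<t=3$ into the hypotheses of that theorem; no new idea is needed.

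Concretely, I would first set $U=(-2,2)^3$, so that $w$ is a nonnegative smooth solution of the heat equation on the parabolic cylinder $U\times(1,3]\subset K$, and then fix an intermediate connected open set $V$ with $\tilde K\Subset V\Subset U$, say $V=(-3/2,3/2)^3$. Applying the parabolic Harnack inequality on $U\times(1,3]$ with this $V$ and the time levels $t_1=2$, $t_2=3$ produces a constant $C=C(U,V)>0$, depending only on this fixed data and not on $w$, such that
\[
\sup_{x\in V} w(x,2)\le C\,\inf_{x\in V} w(x,3).
\]
Since $\tilde K\subset V$ one has $\sup_{x\in\tilde K} w(x,2)\le \sup_{x\in V} w(x,2)$ and $\inf_{x\in V} w(x,3)\le \inf_{x\in\tilde K} w(x,3)$, so the displayed estimate immediately yields $\sup_{x\in\tilde K} w(x,2)\le C\,\inf_{x\in\tilde K} w(x,3)$, which is the claim with $C_{0}=C$. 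As $U$, $V$, $\tilde K$ and the time levels are fixed once and for all, $C_{0}$ is an absolute constant.

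If a self-contained argument is preferred to a citation, the same conclusion follows by the standard chaining procedure: one covers $\overline{V}\times\{2\}$ by finitely many small backward parabolic cylinders contained in $U\times(1,3]$, connects this layer to $\overline{V}\times\{3\}$ by a finite chain of such cylinders (here the connectedness of $V$ guarantees finitely many suffice), applies the local Harnack inequality — a consequence of Moser iteration, or of the mean value property for the heat operator — on each cylinder, and multiplies the resulting constants. I do not anticipate any genuine obstacle: the statement is a verbatim special case of a textbook result, and the only points to verify, namely that $\tilde K$ sits compactly inside the spatial domain and that the earlier time level $2$ lies strictly below the later level $3$ with both contained in the time interval $(1,3]$ of $K$, are evident from the normalization $K=[-2,2]^3\times[1,3]$, $\tilde K=[-1,1]^3$.
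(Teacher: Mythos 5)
Your proposal is correct and matches the paper, which does not prove Lemma \ref{HH} at all but simply recalls it as the parabolic Harnack inequality of \cite[Section 7.1, Theorem 10]{Eva2010book}; your careful fitting of $U=(-2,2)^3$, $V=(-3/2,3/2)^3\supset\tilde K$ and the time levels $t_1=2<t_2=3$ into that theorem's hypotheses is exactly the intended reading of the citation.
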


In the sequel we assume, without loss of generality, that $C_{0} \geq 1$.

    \subsection{Proof by contradiction}

Set $\ep_{0}=(2C_{0})^{-1}$ and take a decreasing function $c \in C^\infty (\R)$ such that $c(2)=2$ and $c(3)=\ep_{0}$. Then, as the smooth parasitic solution $(v,q)$ of the form \eqref{para}, we define
\[
v(x,t)
=
\left(
\begin{matrix}
c(t)&0&0
\end{matrix}
\right)^{\top}
\qquad \text{and} \qquad
q(x,t) = -c'(t)x^{1}.
\]
Moreover, we define $\ep = \ep_{0}/2$. Notice that $\ep<\ep_{0}\le 1/2$.

Let us reason by contradiction. Assume that $(u,p)$ in the statement satisfies
\begin{equation}\label{siapp}
\|v - u\|_{C(K)} \le \ep.
\end{equation}
Operating $\nabla\cdot$ to the first line of \eqref{app}, we see that $\Delta p=0$ in $\R^{4}_{+}$, namely, that $p=p(x,t)$ is harmonic in $x$. Then, by the Liouville theorem, $p$ is, for each time, constant in $x$. Using again \eqref{app}, this implies that $u$ actually satisfies the heat equation $\partial_{t} u - \Delta u = 0$ in $\R^{4}_{+}$. Moreover, from \eqref{siapp} and from the fact that $c$ is greater than $\ep_{0}$ on $[1,3]$, we deduce that the first component $u^{1}$ of $u$ is nonnegative in $K$. It is therefore a nonnegative smooth solution of the heat equation in $K$ so that we can apply Lemma \ref{HH} and get 
\begin{equation}\label{C1}
\sup_{x\in \tilde K} u^{1}(x,2)
\le C_{0} \inf_{x\in \tilde K} u^{1}(x,3).
\end{equation}
But \eqref{siapp} implies that
\begin{equation}\label{C2}
\sup_{x\in \tilde K} u^{1}(x,2) \ge c(2) - \ep > \frac32
\qquad \text{and} \qquad
\inf_{x\in \tilde K} u^{1}(x,3) \le c(3) + \ep < \frac{1}{C_{0}}.
\end{equation}
Hence, by combining \eqref{C1} and \eqref{C2}, we get $3/2<1$. This is the contradiction which completes the proof of Theorem \ref{thmb}.

    \section*{Acknowledgment}

F.S. was supported by the project ANR-23-CE40-0014-01 BOURGEONS of the French National Research Agency (ANR).

    \addcontentsline{toc}{section}{References}
    \bibliography{Ref}
    \bibliographystyle{alpha}

\begin{flushleft}
M.~Higaki, 
\textsc{Department of Mathematics, Graduate School of Science, Kobe University, 1-1 Rokkodai, Nada-ku, Kobe 657-8501, Japan}\par\nopagebreak
\noindent  \textit{E-mail address: }\texttt{higaki@math.kobe-u.ac.jp}
\end{flushleft}

\begin{flushleft}
F.~Sueur, \textsc{Department of Mathematics, 
Maison du nombre, 6 avenue de la Fonte, 
University of Luxembourg,  
L-4364 Esch-sur-Alzette, Luxembourg }\par\nopagebreak 
\noindent  \textit{E-mail address:} \texttt{Franck.Sueur@uni.lu}
\end{flushleft}

\medskip

    \noindent\today

\end{document}